\definecolor{bg}{rgb}{0.93,0.93,0.93}
\newtheorem{theorem}{Theorem}[section]
\newtheorem{lemma}[theorem]{Lemma}
\newtheorem{proposition}[theorem]{Proposition}
\newtheorem{remark}[theorem]{Remark}
\acrodef{pde}[PDE]{partial differential equation}
\acrodef{fe}[FE]{finite element}
\acrodef{fem}[FEM]{finite element method}
\acrodef{dof}[DOF]{degree of freedom}
\acrodef{agfe}[AgFE]{aggregated finite element}
\acrodef{agfem}[AgFEM]{aggregated finite element method}
\acrodef{sfem}[StFEM]{standard finite element method}
\acrodef{xfem}[XFEM]{extended finite element method}
\acrodef{cg}[CG]{continuous Galerkin}
\acrodef{dg}[DG]{discontinuous Galerkin}
\acrodef{ale}[ALE]{arbitrary Lagrangian Eulerian}
\acrodef{jit}[JIT]{just-in-time}
\newcommand{\tnor}[1]{{\left\vert\kern-0.25ex\left\vert\kern-0.25ex\left\vert #1 
\right\vert\kern-0.25ex\right\vert\kern-0.25ex\right\vert}}
\definecolor{shadecolor}{gray}{.92}
\definecolor{incolor}{rgb}{0,0,.7}
\definecolor{outcolor}{rgb}{.65,0,0}
\definecolor{syntaxcolor}{rgb}{.65,0,0}
\begin{document}
\title[Space-time unfitted finite element method]{Space-time unfitted finite element methods for time-dependent problems on moving domains}
\author[S. Badia]{Santiago Badia$^{1,2,*}$}
\author[H. Dilip]{Hridya Dilip$^{1}$}
\author[F. Verdugo]{Francesc Verdugo$^{2}$}
\thanks{\null\
$^{1}$ School of Mathematics, Monash University, Clayton, Victoria, 3800, Australia.\
$^{2}$ Centre Internacional de M\`etodes Num\`erics a l'Enginyeria, Esteve Terrades 5, E-08860 Castelldefels, Spain.\
$^*$ Corresponding author.\
E-mails: {\tt santiago.badia@monash.edu} (SB),
{\tt hridya.dilip@monash.edu} (HD),
{\tt fverdugo@cimne.upc.edu} (FV)
}

\date{\today}
\begin{abstract}
  We propose a space-time scheme that combines an unfitted finite element method in space with a discontinuous Galerkin time discretisation for the accurate numerical approximation of parabolic problems with moving domains or interfaces. We make use of an aggregated finite element space to attain robustness with respect to the cut locations. The aggregation is performed slab-wise to have a tensor product structure of the space-time discrete space, which is required in the numerical analysis. {As an aternative, we also propose a space-time ghost penalty stabilisation term to attain robustness.} We analyse the proposed algorithm, providing stability, condition number bounds and anisotropic \emph{a priori} error estimates. A set of numerical experiments confirm the theoretical results for a parabolic problem on a moving domain. The method is applied for a mass transfer problem with changing topology.
\end{abstract}
\maketitle

\noindent{{\bf {Keywords}}: Embedded methods; unfitted finite elements; space-time discretisations.

\newcommand{\jump}[1]{{\llbracket #1 \rrbracket }}
\newcommand{\sbcom}[1]{{\color{PineGreen}{***{SB: #1}***}}}
\newcommand{\hdcom}[1]{{\color{olive}{{HD: #1}}}}
\newcommand{\fvcom}[1]{{\color{magenta}{*{FV: #1}*}}}
\newcommand{\modify}[1]{{\color{red}{#1}}}
\section{Introduction}\label{sec:introduction}

\par Numerical simulations using standard \acp{fem} require the generation of body-fitted meshes, which is one of the main bottlenecks of the simulation workflow. This problem is exacerbated in applications that involve moving interfaces and evolving geometries. The method of lines, which discretises in space and time separately, cannot be readily applied to transient problems with moving domains or interfaces since it assumes a constant geometry in time. In order to solve this problem, one can consider \ac{ale} schemes \cite{Donea1982,nobile1999stability}. \ac{ale} schemes require frequent remeshing and are not suitable for large geometrical variations or topological changes. Another approach is to use variational space-time formulations on space-time body-fitted meshes. These methods have been widely used in applications like fluid-structure interaction \cite{Thompson1996,LeBeau1993,Tezduyar2006}. Even though variational space-time schemes can be applied to moving domains/interfaces, they do require space-time meshes, which are unfeasible in general. The mathematical analysis of space-time methods has been considered, e.g., in \cite{Saito2020} (for a \ac{dg} method in time for parabolic equations on body-fitted domains and constant geometries) and in \cite{Sudirham2006} (for a space-time \ac{dg} method for advection-diffusion on time-dependent domains). 

\par Unfitted (also known as \emph{immersed} or \emph{embedded}) \ac{fe} formulations lower the geometrical requirements since they do not require body-fitted meshes but simple, e.g., Cartesian, background meshes. Hence, unfitted \ac{fem} are becoming increasingly popular in applications with moving interfaces such as fluid-structure interactions \cite{Formaggia2021,Burman2014,Schott2019}, fracture mechanics \cite{Giovanardi2017,Dekker2019xfem}, and in applications with changing geometries such as additive manufacturing \cite{Neiva2020,Carraturo2020additive} and stochastic geometry problems \cite{Badia2021monte}.  In \cite{Lehrenfeld:462743}, a mass transport problem across an evolving interface is analysed using a variational space-time \ac{dg} \ac{xfem}.

\par However, unfitted \acp{fem} are prone to ill-conditioning problems when dealing with unfitted boundaries and high contrast interface problems \cite{Reusken2014,Badia2018aggregated,Burman2014cutfem}. If the intersection of a cut background cell with the physical domain is small, it can lead to a so-called \emph{small cut cell problem}. The support of the \ac{fe} shape functions corresponding to the background cell can have an arbitrarily small support, leading to almost singular system matrices. Several methods \cite{Burman2014cutfem,Kummer2016,Lehrenfeld2016,Guzmn2017,Li2019} have been proposed to circumvent the small cut cell problem. However, only few formulations are robust and optimal with respect to the cut cell position. Some methods include additional terms that enhance the stability of the \ac{fe} discretisation while keeping optimal convergence (see, e.g., the \emph{ghost penalty} \cite{Burman2010ghost} formulation used in Cut\ac{fem} \cite{Burman2014cutfem,Zahedi2017,Claus2019cutfem}). Another approach, used in this work, involves cell \emph{aggregation} (or \emph{agglomeration}) techniques. These techniques can readily be applied to numerical methods that can handle general polytopal meshes, e.g., \ac{dg} or hybridisable methods (see, e.g., \cite{Bassi2012agglomeration,Saye2017,Engwer2012,Burman2021}). Cell aggregation for $\mathcal{C}^0$ \ac{fe} spaces has been proposed in \cite{Badia2018aggregated}, where it was coined \ac{agfem}.

\par In \ac{agfem}, the \acp{dof} associated to \ac{fe} functions that have arbitrarily small support and can lead to ill-conditioning are eliminated. This is attained by designing a discrete extension operator that constrains the ill-posed \acp{dof} using the well-posed \acp{dof} while preserving $\mathcal{C}^0$ continuity. \ac{agfem} enjoys good numerical properties, such as stability, bounded condition numbers, optimal convergence and continuity with respect to data; detailed mathematical analysis of this method is included in \cite{Badia2018aggregated} for elliptic problems, in \cite{Badia2018mixed} for the Stokes equation and in \cite{Badia2022robust} for higher-order \acp{fe}. The method is also amenable to arbitrarily complex 3D geometries \cite{Martorell}, distributed implementations for large scale problems \cite{Verdugo2019}, error-driven $h$-adaptivity and parallel tree-based meshes \cite{Badia2021parallel}, explicit time-stepping for the wave equation \cite{Burman2020explicit}  and elliptic interface problems with high contrast \cite{Neiva2021}. A \emph{weak} \ac{agfem} technique is proposed in \cite{Badia2022ghost}, which is much less sensitive to stabilisation parameters than the \emph{ghost penalty} method. 

\par Despite the potential of unfitted \acp{fem} for transient problems with moving boundaries and interfaces, few formulations are robust and enjoy optimal convergence. The space-time \ac{dg} \ac{xfem} scheme proposed in \cite{Lehrenfeld:462743} is not robust to the cut location and the error estimate is suboptimal with respect to time. The main reason for the suboptimal error estimates is the fact that the \ac{fe} space cannot be expressed as a slab-wise tensor product in space-time. The Cut\ac{fem} formulation in \cite{Zahedi2017} makes use of {space-time quadratures to approximate moving domains} for transient convection-diffusion problems. 
Robust and optimally convergent space-time \ac{dg} formulations are presented in \cite{preuss,heimann_ma,preuss_ma}, in which the robustness of these methods are due to additional stabilisation terms in the weak formulation. Space-time Cut\ac{fem} on overlapping meshes with optimal convergence are explored in \cite{lundholm2015space,lundholm2021cut}.

\par The novelties of this work are the following:
\begin{enumerate}
  \item We propose a novel unfitted variational space-time formulation on moving domains/interfaces that is robust with respect to the small cut cell problem. Robustness is attained by extending \ac{agfem} to space-time. The spatial discretisation can handle both continuous (nodal) and discontinuous \ac{fe} spaces, while a \ac{dg} space is used in time.
  \item We carry out a detailed mathematical analysis proving that this method enjoys sought-after numerical properties such as well-posedness, stability, bounded condition numbers and optimal convergence. In addition, we provide implementation details and perform a set of numerical experiments that support the theoretical results.
\end{enumerate}
 In particular, we consider a slab-wise cell aggregation scheme and a space-time discrete extension operator that can be expressed as a space-time tensor product. This way, the \ac{agfe} space is constant at each time slab, and we can prove optimal error bounds.

\par The outline of this work is as follows. First, we introduce  the embedded geometry setup, the aggregation strategy and construct the \ac{agfe} spaces in Section~\ref{sec:st_agfem}. In Section~\ref{sec:approximation}, the proposed space-time \ac{agfem} discretisation is introduced for a model problem. We perform the numerical analysis of the method in Section~\ref{sec:numerical_analysis} and numerical experiments that support these results are presented in Section~\ref{sec:num_expts}. Finally, we draw some conclusions in Section~\ref{sec:conclusion}.

\section{Space-time aggregated finite element method}\label{sec:st_agfem}

\subsection{Embedded geometry setup}\label{sec:geo_setup}

\par In this section, we provide a set of geometrical definitions that will be required to define the proposed formulation. We refer to Figure~\ref{fig:embed_bdry_setup} for an illustration of many of the definitions below. 

Let us consider an open, bounded, connected Lipschitz domain $\Omega^0 \subset \mathbb{R}^d$, with $d \in \{2,3\}$ the number of spatial dimensions, a time domain $[0,T]$ and a smooth diffeomorphism \mbox{$\boldsymbol{\varphi}_t(\boldsymbol{x}): \Omega^0 \rightarrow \mathbb{R}^{d}$} for any $t \in [0,T]$. We define $\Omega(t) \doteq  
\left\{ \boldsymbol{\varphi}_t(\boldsymbol{x}) \ : \ \boldsymbol{x} \in \Omega^0 \right\}$ (the domain at a given time step) and $Q = \left\{ \boldsymbol{x} \in \Omega(t) \ : \ t \in [0,T] \right\}$ (the space-time domain). For simplicity, we assume that $Q$ is a polytopal domain. 
$\partial \Omega(t)$ represents the boundary of $\Omega(t)$. We consider a partition of the boundary into $\partial \Omega_D(t)$ and $\partial \Omega_N(t)$, the Dirichlet and Neumann spatial boundaries, resp. Thus, $\partial \Omega(t) = \partial \Omega_D(t) \cup\partial \Omega_N(t)$ and $\partial \Omega_D(t) \cap\partial \Omega_N(t) = \emptyset$. The Dirichlet and Neumann boundaries of the space-time domain are $\partial Q_D \doteq \cup_{t \in (0,T)}\partial \Omega_D(t) \times \{t\}$ and $\partial Q_N\doteq \cup_{t \in (0,T)}\partial \Omega_N(t) \times \{t\}$, resp.  The boundary of $Q$ is $\partial Q \doteq \Omega(0) \cup \Omega(T) \cup  \partial Q_D \cup \partial Q_N$.

\par Let us define a spatial artificial domain $\Omega_{art} \subset \mathbb{R}^d$ such that $\Omega(t) \subset \Omega_{art}$ for all $t\in [0,T]$. One can consider a simple geometry for $\Omega_{art}$, e.g., a bounding box, which can be meshed using a Cartesian grid. We can also define the space-time artificial domain $Q_{art} \doteq \Omega_{art} \times [0,T]$, such that $Q \subset Q_{art}$.

\par Let $0 = t^0 < t^1 < \dots < t^N = T$ and  
$J^n \doteq (t^{n-1},t^n), 1 \le n \le N$, denote the $n$-th time slab. $\left\{ J^n \right\}_{n=1}^{N}$ is a partition of $[0,T]$. The size of each time slab $J^n$ is denoted by $\tau^n \doteq t^n - t^{n-1}$ (the so-called time step size) and $\tau \doteq \max_{n=1,\ldots,N} \tau^n$. The artificial domain and the space-time domain corresponding to a time slab $J^n$ are denoted as $Q^n_{art} \doteq \Omega_{art} \times J^n$ and $Q^n \doteq  \cup_{t\in J^n} \Omega(t)\times \{t\}$, resp. Furthermore, the intersection of the Dirichlet and Neumann boundaries with $Q^n_{art}$ are denoted as $\partial Q^n_D$ and $\partial Q^n_N$, resp. We also use the notation $\Omega^n \doteq \Omega(t^n)$, $n=0,\ldots,N$.  

\par Let $\bar{\mathcal{T}}_{h,art}^n$ be a \emph{conforming}, shape regular and quasi-uniform partition of $\Omega_{art}$. The space-time mesh $\mathcal{T}_{h,art}^n$ is the Cartesian product of $\bar{\mathcal{T}}_{h,art}^n$ and $J^n$, i.e.,
\[
\mathcal{T}_{h,art}^n \doteq \{ \bar{T} \times J^n \in \mathcal{T}_{h,art} \ : \ \bar{T} \in \bar{\mathcal{T}}_{h,art}^n \}.
\] 
The super-index $n$ in $\bar{\mathcal{T}}_{h,art}^n$  stands for the fact that the background mesh can be different at different time slabs. E.g., $\bar{\mathcal{T}}_{h,art}^n$ can be a background $n$-tree mesh with adaptive mesh refinement. We use the notation $T^n$ for cells in $\mathcal{T}_{h,art}^n$  and $\bar{T}^n$ for cells in $\bar{\mathcal{T}}_{h,art}^n$. By construction, we can define the injective map $\bar{\cdot}: \mathcal{T}_{h,art}^{n} \longrightarrow \bar{\mathcal{T}}_{h,art}^{n}$ such that $T^n = \bar{T}^n \times J^n$, i.e., a map from space-time to space-only cells at each time slab. In the analysis, we assume that $\bar{\mathcal{T}}_{h,art}^n$ is a shape-regular and quasi-uniform mesh with characteristic cell size $h$. 

\begin{figure}[ht!]
\includegraphics[width=0.99\textwidth]{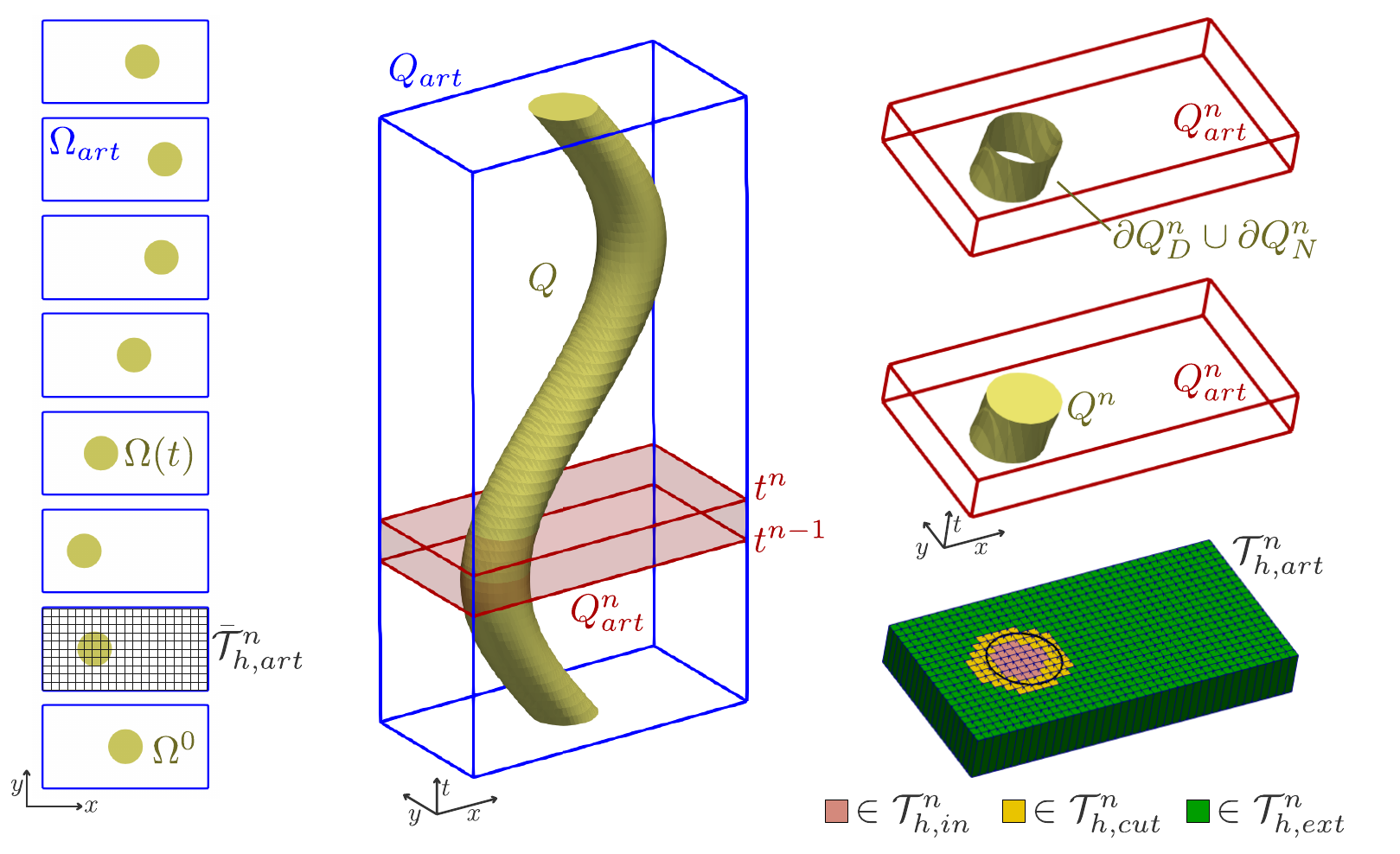}
\caption{Graphical representation of the main geometrical quantities associated with the space-time embedded finite element setup for a 2D+1D example. We note in the figure at the bottom-right corner that some cells that are not cut on $t^n$ appear as cut. This is because these cells are cut at some time value $t \in [t^{n-1},t^n]$.}
\label{fig:embed_bdry_setup}
\end{figure}

\definecolor{FigCyan}{RGB}{91,225,214}
\definecolor{FigYellow}{RGB}{242,225,33}

\subsection{Cell aggregation}\label{sec:aggregation}

\par The direct use of unfitted \acp{fem} on the previously defined meshes is not robust with respect to cut locations. As commented in the introduction, one could consider using stabilisation techniques to remedy this problem. Another approach, which is followed in this work, relies on the definition of aggregated or agglomerated meshes. In particular, cells are aggregated in such a way that all aggregates have a \emph{large enough} portion inside the physical domain, e.g., there is one internal cell per aggregate.   

\par We refer to \cite{Badia2018aggregated} for the aggregation strategy required in the space-only case. In space-time, we apply this algorithm slab-wise to prevent cells at different time slabs to be merged to form aggregates. That would complicate the implementation and numerical analysis and have a serious impact on the computational cost. Our motivation is to end up with a space-time solver that only requires a set of sequential slab-wise solvers, as time marching methods. 

\par The aggregation algorithm requires a classification of active cells between well-posed and ill-posed cells. The most straightforward definition is to classify interior cells as well-posed and cut cells as ill-posed. If ${T}^n \subset Q^n$, then ${T}^n$ is an internal cell. If ${T}^n \cap Q^n = \emptyset$, then ${T}^n$ is an external cell. Otherwise, ${T}^n$ is a cut cell. The set of internal, external and cut cells on time slab $J^n$ are denoted as $\mathcal{T}_{h,in}^n$, $\mathcal{T}_{h,ext}^n$ and $\mathcal{T}_{h,cut}^n $, resp. (see Figure~\ref{fig:embed_bdry_setup}). The union of internal, external and cut cells on each time slab is denoted as $Q_{in}^n \subset Q$, $Q_{ext}^n$ and $Q_{cut}^n$, resp. We define the set of active cells as $\mathcal{T}^n_{h,act}\doteq \mathcal{T}^n_{h,in} \cup \mathcal{T}^n_{h,cut}$ and their union as $Q^n_{act}$. We can readily define the space-only meshes $\bar{\mathcal{T}}^n_{h,act}\doteq \bar{\mathcal{T}}^n_{h,in} \cup \bar{\mathcal{T}}^n_{h,cut}$ using the map $\bar{\cdot}$ over the cells of the respective space-time meshes. The union of cells in $\bar{\mathcal{T}}_{h,act}^n$ is represented with $\Omega_{act}^n$.

This definition can be further refined by considering a numerical parameter $\eta_0 > 0$. Given a time slab $J^n$, one can compute the cell-wise quantity
\[
 \eta^n(T^n) = \underset{t \in J^n}{\mathrm{min}}\frac{ | \bar{T}^n \cap \Omega(t)|}{ |\bar{T}^n|}, \qquad \forall T^n \in \mathcal{T}_{h,act}^n. 
\] 
If $\eta^n(T^n) \ge \eta_0$, then $T^n$ is a well-posed cell. Otherwise, it is ill-posed. Note that this definition enforces that any space-time cell must have a significant portion in the spatial domain at all times, thus it is \emph{anisotropic}. When $\eta_0 = 1$, then well-posed cells are internal cells and ill-posed cells are cut cells. For brevity, we will consider this case in the following exposition, even though the general case does not involve any modification.

\par Next, at each time slab, the aggregation strategy introduced in \cite{Badia2018aggregated} (in a spatial mesh only) is performed on the active mesh $\mathcal{T}_{h,act}^n$. Very briefly, the algorithm performs the following steps: {(1) well-posed cells are marked as touched first;} (2) each ill-posed cell that is neighbour\footnote{We recall that neighbour means neighbour in space. This can be achieved by modifying the aggregation strategy or by using the standard space aggregation verbatim at each slab independently.} of touched cells is merged to one of these and marked as touched; (3) repeat (2) till all active cells are touched. This algorithm returns a set of aggregates $\mathrm{ag}(\mathcal{T}_{h,act}^n)$  that contain one and only one well-posed cell. This well-posed cell is called the \emph{root} cell of the aggregate. We refer to \cite{Badia2018aggregated} for more details about the aggregation strategy in space, e.g., bounds for the size of the resulting aggregates.

\subsection{Space-time unfitted \ac{fe} spaces}\label{sec:unf_fem_space}

\par Our aim is to construct \ac{agfe} spaces on each time slab making use of the aggregated meshes defined above. We start by introducing some notations. It is crucial to note that the \ac{agfe} spaces on each time slab may be different, even if $\mathcal{T}_{h,art}^{n}$ is the same for all slabs, due to the evolving geometry in time. 

\par Let $T^n = \bar{T}^n \times J^n \in \mathcal{T}_{h,act}^{n}$, where $\bar{T}^n \in \bar{\mathcal{T}}_{h,act}^n$. We define the local \ac{fe} space as a tensor product of spatial and temporal polynomials. For $d-$simplex spatial meshes, the local \ac{fe} space ${V}({{T}^n}) \doteq \mathcal{P}_p({\bar{T}^n}) \otimes \mathcal{P}_q({J^n})$ is the space of polynomials of order less than or equal to $p$ in the spatial variables $x_1,x_2,\dots,x_d$ and polynomials of order less than or equal to $q$ in the temporal variable. For $d-$cube spatial meshes the local \ac{fe} space ${V}({{T}^n}) \doteq \mathcal{Q}_p({\bar{T}^n}) \otimes \mathcal{P}_q({J^n})$ is the space of polynomials of order less than or equal to $p$ in each of the spatial variables $x_1,x_2,\dots,x_d$ and polynomials of order less than or equal to $q$ in the temporal variable.

\par In this work, we restrict ourselves to Lagrangian \ac{fe} methods. Observe that the basis of the local \ac{fe} space $V({{T}^n})$ is the tensor product of the Lagrangian basis of order $p$ in space and a basis for univariate polynomials of order $q$ in time. (The choice of a basis in time is flexible, since we will not enforce $\mathcal{C}^0$ continuity in time, but we will consider a Lagrangian basis for simplicity.) Let $\mathcal{N}({{T}^n})$ denote the set of Lagrangian nodes of ${{T}^n}$; any  $a \in \mathcal{N}({{T}^n})$ can be expressed as a tuple $(a_{\boldsymbol{x}},a_t)$ of space and time nodes. The dual basis of \acp{dof} corresponds to the pointwise evaluation at these nodes. Analogously, the space-time shape functions associated to node $b \in \mathcal{N}({{T}^n})$  can be expressed as $\Phi^b(\boldsymbol{x},t) \doteq \phi^{b_{\boldsymbol{x}}}(\boldsymbol{x})\varphi^{b_t}(t)$, i.e., the tensor product of a spatial shape function $\phi^{b_{\boldsymbol{x}}}(\boldsymbol{x})$ and temporal shape function $\varphi^{b_t}(t)$. It satisfies $\Phi^b({\boldsymbol{x}}^{a_{\boldsymbol{x}}},t^{a_t}) = \phi^{b_{\boldsymbol{x}}}({\boldsymbol{x}}^{a_{\boldsymbol{x}}})\varphi^{b_t}(t^{a_t}) = \delta_{a_{\boldsymbol{x}}b_{\boldsymbol{x}}}\delta_{a_tb_t}$, where ${\boldsymbol{x}}^{a_{\boldsymbol{x}}}$ and $t^{a_t}$ are the spatial and temporal coordinates of the node $a$,  resp., and $\delta$ is the Kronecker delta.

\par We consider a \ac{cg} global \ac{fe} space in the spatial direction at each time slab $J^n$. For \ac{cg} methods, this is attained (on conforming meshes) by a local-to-global \ac{dof} map such that the resulting global space of functions is $\mathcal{C}^0$ continuous. To this end, at time slab $J^n$, we introduce the active space-time \ac{fe} space 
\[
V_{h,act}^n \doteq \{v \in \mathcal{C}^0(Q^n_{act}) : v|_{T} \in V(T), 
\text{ for any } T \in \mathcal{T}_{h,act}^{n} \}.
\]
It can also be defined as a tensor-product space as follows. We define a space-only global \ac{fe} space on $\Omega_{act}^n$ as
\[
\bar{V}_{h,act}^n =\{v \in \mathcal{C}^0(\Omega_{act}^n) : v|_{\bar{T}} \in \mathcal{X}_p(\bar{T}), 
\text{ for any } \bar{T} \in \bar{\mathcal{T}}_{h,act}^n \},
\]
where $\mathcal{X}_p(\bar{T})$ is $\mathcal{P}_p(\bar{T})$ for simplicial meshes and $\mathcal{Q}_p(\bar{T})$ for hexahedral meshes. With all these ingredients, $V_{h,act}^n$ can be equivalently defined as $V_{h,act}^n \doteq \bar{V}_{h,act}^n \otimes \mathcal{P}_q(J^n)$. We can proceed analogously for interior cells to define $V_{h,in}^n \doteq \bar{V}_{h,in}^n \otimes \mathcal{P}_q(J^n)$.

\par Since we consider a \ac{dg} approximation in time, the global space-time space can readily be defined as a Cartesian product of the above defined slab-wise spaces. In any case, the global problem is \emph{never solved at once} but sequentially slab-by-slab.

\subsection{Space-time \ac{agfe} spaces}\label{sec:agfem_space}

\par It has been established that solving the \ac{fe} problem on the active space leads to ill-conditioning problems. Therefore, we use an \ac{agfe} space to solve this issue. The key idea of \ac{agfem} is to eliminate the problematic \acp{dof} using well-posed \acp{dof}. This is achieved by defining a discrete extension operator. Let $\bar{\mathcal{E}} : \bar{V}_{h,in} \rightarrow \bar{V}_{h,act}$ be the spatial extension operator between space-only interior $\bar{V}_{h,in}$ and active $\bar{V}_{h,act}$ \ac{fe} spaces. The space-only discrete extension operator has been previously described, e.g., in \cite{Badia2018aggregated}. The spatial \ac{agfe} space is defined as $\bar{V}_{h,ag} \doteq \bar{\mathcal{E}}(\bar{V}_{h,in})$. The discrete extension operator relies on a set of linear constraints that constrain the ill-posed \acp{dof} (i.e., the ones that only belong to ill-posed cells) by the well-posed \acp{dof} (the ones that belong to at least one well-posed cell). We do not provide the full construction of the space-only operator for the sake of conciseness. The interested reader can find the complete definition in \cite{Badia2018aggregated} for the case of conforming background meshes, \cite{Badia2021parallel} for non-conforming adaptive meshes and \cite{Badia2022robust} for a version of this extension that is well-suited to high-order approximations. The space-time extension proposed herein can be applied in all these situations.

\par In this work, we must define a slab-wise space-time discrete extension operator between $V_{h,in}^n$ and $V_{h,act}^n$. We do this by combining the slab-wise aggregation algorithm in Section \ref{sec:aggregation}, the tensor-product definition of these spaces in Section \ref{sec:unf_fem_space} and the space-only extension operator in \cite{Badia2018aggregated}. In particular, at time slab $J^n$, the space-time extension operator $\mathcal{E}^n:V_{h,in}^n \rightarrow V_{h,act}^n$ is defined as follows:
\begin{equation}\label{eq:extension_tensor_product}
  \mathcal{E}^n(V_{h,in}^n) = \mathcal{E}^n(\bar{V}_{h,in}^n \otimes \mathcal{P}_q(J^n)) = \bar{\mathcal{E}}^n(\bar{V}_{h,in}^n) \otimes \mathcal{P}_q(J^n),
\end{equation}
where $\bar{\mathcal{E}}^n: \bar{V}_{h,in}^n \rightarrow \bar{V}_{h,act}^n$ is a space-only extension operator at slab $J^n$. The space-time aggregated \ac{fe} space on the time slab, $V^n_{h,ag} \doteq \mathcal{E}^n(V_{h,in}^n)$. The global space-time \ac{agfe} space is \mbox{$\mathcal{V}_{h,ag} \doteq V^1_{h,ag}\times\dots\times V^N_{h,ag}$}. By construction, we have $\mathcal{E}^n(u_{h})(\boldsymbol{x},t) = \bar{\mathcal{E}}^n(u_{h}(\cdot,t))(\boldsymbol{x})$ for $u_h \in V_{h,in}^n$. 

\section{Approximation of parabolic problems on moving domains}\label{sec:approximation}

\subsection{Model problem}\label{sec:model_problem}

\par We start by introducing some anisotropic functional spaces. Let \mbox{$\boldsymbol{\alpha} = (\alpha_1,\alpha_2,\dots,\alpha_d)$} and $|\boldsymbol{\alpha}|=\alpha_1 + \dots + \alpha_d$. We define the anisotropic Sobolev space of order $(s_s,s_t)$ on a domain $\mathcal{D} \subset \mathbb{R}^{d+1}$  as
\begin{equation}\label{eq:anisotropic_Sobolev_space}
	H^{(s_s,s_t)}(\mathcal{D}) \doteq \{u \in L^2(\mathcal{D}) : \partial^{\alpha_1}_{x_1}\dots\partial^{\alpha_d}_{x_d} u, \, \partial^\beta_t u \in L^2(\mathcal{D}) \text{ for } |\boldsymbol{\alpha}| \le s_s, \beta \le s_t  \}.
\end{equation}
The norm and semi-norm associated with this Sobolev space are:
\begin{equation}\label{eq:anisotropic_sobolev_norms}
	\begin{aligned}
		\|v\|_{H^{(s_s,s_t)}(\mathcal{D})}^2 & \doteq 
      \sum \limits_{\substack{|\boldsymbol{\alpha}|\le s_s}} \left\| \partial^{\alpha_1}_{x_1}\dots\partial^{\alpha_d}_{x_d}  v \right\|^2_{L^2(\mathcal{D})} +
      \sum \limits_{\substack{\beta \le s_t}} \left\| \partial^\beta_t v \right\|^2_{L^2(\mathcal{D})}, \\	
      |v|_{H^{(s_s,s_t)}(\mathcal{D})}^2 & \doteq 
      \sum \limits_{\substack{|\boldsymbol{\alpha}| = s_s}} \left\| \partial^{\alpha_1}_{x_1}\dots\partial^{\alpha_d}_{x_d}  v \right\|^2_{L^2(\mathcal{D})} + \left\| \partial^{s_t} v \right\|^2_{L^2(\mathcal{D})}.
	\end{aligned}
\end{equation}

\par We introduce the proposed formulation for the convection-diffusion equation on moving domains with non-homogeneous boundary conditions as a model problem. In any case, the proposed methodology can be extended to other parabolic problems. Using the ideas in \cite{Badia2018mixed}, it can also be generalised to indefinite systems, e.g., incompressible flows. 

\par We represent with $H_{g_D}^{(1,s_t)}(Q)$ the subspace of function in $H^{(1,s_t)}(Q)$ with trace 
$g_D \in H^{(1/2,s_t)}(\partial Q_D)$ on the Dirichlet boundary. The model problem seeks to find $u:Q \rightarrow \mathbb{R} \in H^{(1,0)}_{g_D}(Q) \cap H^{(-1,1)}(Q)$ such that
\begin{equation}
	\label{eq:heat_eqn}
	\begin{aligned}
		&\partial_t u + \boldsymbol{w} \cdot \boldsymbol{\nabla} u - \mu \Delta u = f &\text{ in } & H^{(-1,0)}(Q), \qquad
		 &\mu \nabla u  \cdot \boldsymbol{n} = g_N &\text{ in } & H^{(-1/2,0)}(\partial Q_N),
	\end{aligned}
\end{equation}
and the initial condition $u(x,0) = u^0(x) \in L^2(\Omega^0)$, where $\mu$ is the diffusion coefficient, the source term $f \in H^{(-1,0)}(Q)$, $\boldsymbol{w} \in L^\infty(Q)$ is a solenoidal convective field,  the boundary flux $g_N \in H^{(-1/2,0)}(\partial Q_N)$ and $\boldsymbol{n}$ denotes the outward normal to the boundary $\partial Q_N$. The weak form of \eqref{eq:heat_eqn} consists in finding 
\begin{equation}
\label{eq:weak_problem_st}
u \in H_{g_D}^{(1,0)}(Q) \cap H^{(-1,1)}(Q) \quad : \quad B(u,v) = L(v) \qquad \forall v \in H^{(1,0)}(Q),
\end{equation}
where
\begin{equation} 	\label{eq:global_weak_problem}
	\begin{aligned}
	B(u,v) & = \int_Q \partial_t u \mathrm{~} v \mathrm{~d}\boldsymbol{x} \mathrm{~d}t + \int_0^T a(t,u,v)  \mathrm{~d}t  + \int_0^T c(t,w,u,v)  \mathrm{~d}t, \\
	a(t,u,v)  &= \int_{\Omega(t)} \mu \nabla u \cdot \nabla v \mathrm{~d}\boldsymbol{x},  \quad 
c(t,w,u,v)  = \int_{\Omega(t)} \boldsymbol{w} \cdot \boldsymbol{\nabla} u \mathrm{~} v \mathrm{~d}\boldsymbol{x},  \\
	L(v)&= \int_0^T l(t,v) \mathrm{~d}t, \quad
	l(t,v)= \int_{\Omega(t)} f \mathrm{~} v \mathrm{~d}\boldsymbol{x} +  \int_{{\partial \Omega_N(t)}} g_N v \mathrm{~d}S.
\end{aligned}
\end{equation}

\par A detailed discussion on well-posedness of (\ref{eq:global_weak_problem}) can be found in \cite[Sec.~10.3]{Gross2011} for a mass transport problem with a moving interface and an advection velocity $\boldsymbol{w}$. Well-posedness requires that $\boldsymbol{w}\cdot\boldsymbol{n_x}+n_t {\geq} 0$ on the Neumann boundary $\partial Q_N$, where $\boldsymbol{n} = (\boldsymbol{n_x}, n_t)$ denotes the outward normal to $\partial Q$.

\subsection{Discrete formulation}\label{sec:discret_formulation}

\par In order to state the discrete problem, we introduce some notation. Let $\phi : Q \rightarrow \mathbb{R}$. We denote the values of the function at both sides of an inter-slab interface $t^n$ as $\phi^{n,\pm} \doteq \lim_{\epsilon \rightarrow 0} \phi(\boldsymbol{x},t^n \pm \epsilon)$ and its \emph{jump} as $\llbracket \phi \rrbracket^{n} \doteq \phi^{n,+} - \phi^{n,-}$. Also, we set $\phi^{0,-} \doteq \phi(\boldsymbol{x},t^0)$. 

We consider a weak imposition of the Dirichlet boundary conditions using the Nitsche's method \cite{Nitsche1971} and a spatial \ac{cg} approximation. In any case, the extension to \ac{dg} in space is straightforward, since \ac{dg} methods can readily be applied to agglomerated meshes. 

Since the coupling between time slabs respect causality, it is sufficient to study the problem on a single time slab assuming the value of the unknown at the previous one is known. 
The weak formulation of the model problem \eqref{eq:heat_eqn} using a spatial \ac{cg} and a temporal \ac{dg} discretisation on a time slab $J^n$ consists of finding 
\begin{equation}
	\label{eq:weak_form_st}
	u_h \in V_{h,ag}^n \quad : \quad B_h^n(u_h,v_h) = L_h^n(v_h) \quad \forall  v_h \in V_{h,ag}^n,
\end{equation}
where the left-hand side reads
\begin{equation}\label{eq:FEOp_slabwise_lhs}
	\begin{aligned}
		B_h^n(u_h,v_h) =& \int_{Q^n} \partial_t u_h \mathrm{~} v_h \mathrm{~d}\boldsymbol{x} \mathrm{~d}t 
		+ \int_{\Omega^{n-1}} \llbracket u_h \rrbracket^{n-1} {v}_{h}^{n-1,+} \mathrm{~d}\boldsymbol{x} \\ &+ \int_{J^n} a_h(t,u_h,v_h) \mathrm{~d} t + \int_{J^n} c(t,w,u_h,v_h) \mathrm{~d} t,\\
		a_h(t,u_h,v_h) =&   \int_{\Omega(t)} \mu \nabla u_h \cdot \nabla v_h \mathrm{~d}\boldsymbol{x} 
		+ \sum_{T \in \mathcal{T}_{h,act}^n} \int_{{\partial \Omega_D(t)} \cap \bar{T}} \beta_{\bar{T}} u_h v_h \mathrm{~d}S  \\
		&\quad -\int_{{\partial \Omega_D(t)} } \mu (\boldsymbol{n_x}\cdot \nabla u_h)v_h + \mu (\boldsymbol{n_x}\cdot \nabla v_h)u_h \mathrm{~d}S,
  \end{aligned}
\end{equation}
and the right-hand side reads
  \begin{equation}\label{eq:FEOp_slabwise_rhs}
	\begin{aligned}
		L^n_h(v_h) &= \int_{J^n} l_h(t,v_h) \mathrm{~d}t, \\
		l_h(t,v_h) &= \int_{\Omega(t) } f \mathrm{~} v_h \mathrm{~d}\boldsymbol{x} + 
    \sum_{T \in \mathcal{T}_{h,act}^n} \int_{{\partial \Omega_D(t)} \cap \bar{T}} \beta_{\bar{T}} g_D v_h - \mu (\boldsymbol{n_x}\cdot \nabla v_h)g_D \mathrm{~d}S  \\ &\quad + \int_{{\partial\Omega_N(t)} } g_N v_h \mathrm{~d}S.
	\end{aligned}
\end{equation}
$\boldsymbol{n_x}$ is the space-only normal vector, $\beta_{\bar{T}} = \mu \gamma /{h_{\bar{T}}}$ and $\gamma >0$ is the Nitsche's parameter, which is independent of the cut-cell configuration and must be \emph{large enough} for stability purposes. The value $u_h^{n-1,-}$ comes from the solution of the previous time slab or the initial condition, where we make use of the causality in time. The global \ac{fe} problem over the whole time domain reads as: find 
\begin{equation}\label{eq:whole-problem}
  u_h \in \mathcal{V}_{h,ag} \ : \  B_h(u_h,v_h) = L_h(v_h), \qquad \forall v_h \in \mathcal{V}_{h,ag},
\end{equation}  
with
\begin{equation*}
	\begin{aligned}
	B_h(u_h,v_h) & = \sum \limits_{n=1}^N B^n_h(u_h^n,v_h^n) + \int_{\Omega^0}^{}  u_h^{0,-} v_h^0 ~\mathrm{d}\boldsymbol{x} , \qquad 
	L_h(v_h)&= \sum \limits_{n=1}^N L^n_h(v_h^n) +  \int_{\Omega^0} u^{0} v_{h}^{0} \mathrm{~d}\boldsymbol{x}.
\end{aligned}
\end{equation*}

\subsection{Ghost penalty methods}\label{sec:ghost_penalty}
  In this work, we restrict ourselves to aggregation (or agglomeration) approaches for the sake of conciseness. However, the ideas and numerical analyses in this work could be extended to ghost penalty stabilisation techniques. The proposed space-time ghost penalty method reads: find
\begin{equation}\label{eq:whole-problem-ghost}
  u_h \in \mathcal{V}_{h,act} \ : \  B_h(u_h,v_h) + \sum_{n=1}^{N} \int_{J^n} s_h(u_h,v_h) = L_h(v_h), \qquad \forall v_h \in \mathcal{V}_{h,act},
\end{equation}  
  where at each time slab 
  \begin{equation}
   s_h(u_h,v_h)  = \sum_{\bar{T} \in \bar{\mathcal{T}}_{h,cut}^n}^{} \int_{\bar{T}}   \gamma h_{\bar{T}}^{-2}(u_h - \Pi_{gh}(u_h))(v_h - \Pi_{gh}(v_h))\mathrm{d} \boldsymbol{x},
  \end{equation}
  in which $\Pi_{gh}(\cdot)$ is an operator that performs a projection in space only. For instance, we can simply take $\Pi_{gh}(\cdot) = \mathcal{E}^n$ at each time slab. This way, we obtain a space-time version of the weak \ac{agfem} proposed in \cite{Badia2022ghost}. Alternatively, we can use in space a standard projection onto $\mathcal{P}_p(U)$, where $U$ is an aggregate to define a space-time extension of the standard bulk-based ghost penalty method (see \cite{Badia2022ghost,Burman2010ghost} for more details). Alternatively, we can consider a face-based ghost penalty stabilisation
  \begin{equation}
   s_h(u_h,v_h)  = \sum_{{F} \in {\mathcal{{F}}}_{h,cut}^n}^{} \int_{\bar{F}}   \gamma h_F
   \jump{{\partial_{\boldsymbol{n}_{\boldsymbol{x}}}^i  u_h}} \cdot  
   \jump{{\partial_{\boldsymbol{n}_{\boldsymbol{x}}}^i  v_h}} \mathrm{d} \boldsymbol{x},
  \end{equation}
  where we extend the notation for cells to faces {and $\partial_{\boldsymbol{n}_{\boldsymbol{x}}}^i$ denotes the normal derivative of order $i$}. ($\mathcal{F}_{h,cut}^{n}$ are the cut faces in the space-time mesh $\mathcal{T}_{h}^n$.  Given a face $F \in \mathcal{F}_{h,cut}^{n}$, we represent with $\bar{F}$ the space-only face such that $F = \bar{F} \otimes J^n$.)
 
  The reason for the choices above (analogously to the \ac{agfem} case) is that we can ensure the desired extended stability, continuity and weak consistency properties in \cite[Def. 4.1]{Badia2022ghost} at all time values.

\section{Numerical analysis}\label{sec:numerical_analysis}

\par In this section, we analyse the numerical properties of the space-time \ac{agfem} proposed above. The analysis for the ghost penalty method in Sec.~\ref*{sec:ghost_penalty} can be performed in an analogous way and is not considered for conciseness. We provide first the well-posedness of the steady problem. Next, we consider the transient problem and space-time discretisation. Our time discretisation makes use of \ac{dg} methods and the space-only discretisation can vary between slabs (due to the cell-wise aggregation and possibly space refinement). Following similar ideas in \cite{Chrysafinos2006error} (for body-fitted formulations), we consider an $L^2(Q)$ projector in space-time, in order to eliminate the time derivative terms in the \emph{a priori} error analysis. Anisotropic error estimates are obtained for this projector, relying on an extension of the solution to $Q_{act}\doteq \cup_{n=1}^N Q^n_{act}$. Finally, for the space-only terms of the bilinear form, we can readily use previous analyses of \ac{agfem} in the steady case (see, e.g., \cite{Badia2018aggregated,Badia2022robust}).

For simplicity, we assume $\boldsymbol{w} = \boldsymbol{0}$ in the analysis. As a result, we require $n_t \geq 0$ for well-posedness of the problem. However, the analysis can readily be extended to more complex models, e.g., convection-diffusion-reaction systems, possibly with numerical stabilisation. A robust analysis for singularly perturbed limits (i.e., high Peclet and Reynolds numbers) and error bounds at arbitrary time values can be carried out using the technique proposed in \cite[Section 3]{Chrysafinos2006error} in the analysis below. We also consider an exact treatment of the geometry. Integration errors can not be warded off for non-polyhedral domains and the discrete formulation requires a careful geometrical treatment (see, e.g., \cite{preuss,heimann_ma} for high order space-time geometrical approximations).

\par In the following analysis, all constants are independent of mesh size and the location of cut cells but can depend on the polynomial order. We also introduce the following notation, if $A \le cB$, where $c$ is a positive constant, then we write $A \lesssim B$; similarly if $A \ge cB$, then $A \gtrsim B$. 

\subsection{Spatial discretisation}\label{sec:spatial_disc}

\par In this section, we prove the well-posedness of the spatial discretisation. In the following sections, we will make use of these results at fixed time values $t \in (0,T]$. To avoid cumbersome notation, we drop the time dependency; $\Omega(t)$ and the restriction of space-time \ac{fe} spaces and meshes at $t$ are represented with $\Omega$, $\bar{V}_h$ and $\bar{\mathcal{T}}_h$, resp. We also drop the time slab superscript from \ac{fe} spaces and meshes.  

The spatial discretisation corresponding to the model problem is to seek 
\[
  u_h \in \bar{V}_{h,ag}  \, : \, a_h(u_h,v_h) = l_h(v_h), \quad \forall v_h \in \bar{V}_{h,ag}.
\]
We define the norm on $\bar{V}_{h,ag}$ as
\[
\|v\|^2_{\bar{V}_h} \doteq \mu \|\nabla v \|^2_{L^2(\Omega)} + \sum_{\bar{T} \in \bar{\mathcal{T}}_{h,act}} \beta_{\bar{T}} \|v\|^2_{L^2(\bar{T} \cap \partial\Omega_{D})}.
\]
We introduce the space $\bar{V}(h) \doteq \bar{V}_{h,ag} + H^2(\Omega)$ endowed with the norm
\[
\|v\|_{\bar{V}(h)}^2 \doteq \mu \|\nabla v \|^2_{L^2(\Omega)} + \sum_{\bar{T} \in \bar{\mathcal{T}}_{h,act}}\beta_{\bar{T}} \|v\|^2_{L^2(\bar{T} \cap \partial \Omega_{D})} + \sum_{\bar{T} \in \bar{\mathcal{T}}_{h,act}} \mu h_{\bar{T}}^2 |v|^2_{H^2(\bar{T} \cap \Omega)}.
\]
Using a discrete inverse inequality for \ac{fe} functions on \ac{agfe} spaces \cite{Neiva2021}, it can be proved that $\|v_h\|_{\bar{V}(h)} \lesssim \|v_h\|_{\bar{V}_h}$, for any $v_h \in \bar{V}(h)$. {We also introduce a trace inequality to estimate the Nitsche terms. For a domain $\omega$ with a Lipschitz boundary, the following inequality holds (see, \mbox{\cite[Th. 1.6.6]{Brenner2008}}):
\begin{equation}\label{eq:trace_ineq_agg}
	\|u\|^2_{L^2(\partial \omega)} \le C_\omega \|u\|_{L^2(\omega)}\|u\|_{H^1(\omega)}, \hspace{2ex} u \in H^1(\omega).
\end{equation}
The constant $C_\omega$ depends only on the shape of $\omega$. Since the aggregates are shape regular, this inequality holds at the aggregate level also. We also make use of an inverse inequality for aggregates. Let $A$ denote an aggregate. For any $u_h \in \bar{V}_{h,ag}$ and any aggregate $A \in \mathrm{ag}(\bar{\mathcal{T}}_{h,act})$, we have (see \cite{Badia2022robust})
\begin{equation}\label{eq:inverse_ineq_agg}
	\|u_h\|_{H^1(\Omega\cap A)} \le C h_A^{-1}\|u_h\|_{L^2(\Omega \cap A)},
\end{equation}    
where, $C>0$ is a constant and $h_A$ is the size of the aggregate.
}

\par The coercivity and continuity of the bilinear form ensure the well-posedness of the problem.
\begin{proposition}\label{prop:coercivity_space}
	The bilinear form $a_h(\cdot,\cdot)$ satisfies:
  \begin{equation}\label{eq:well_posed_space}
    a_h(v_h,v_h) \ge c_\mu  \left( \|v_h\|^2_{\bar{V}_h} + \|v_h\|^2_{L^2(\Omega)} \right), \quad a_h(u,v_h) \le C_\mu \|u\|_{\bar{V}(h)} \|v_h\|_{\bar{V}(h)},
  \end{equation}
  for any $v_h \in \bar{V}_{h,ag}$ and $u \in \bar{V}(h)$ and  
 $\gamma$ large enough, and where $c_\mu$ and $C_\mu$ are positive constants away from zero.
\end{proposition}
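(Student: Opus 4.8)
The plan is to prove the two inequalities in \eqref{eq:well_posed_space} separately, treating this as a standard Nitsche coercivity/continuity argument adapted to the aggregated setting. The key auxiliary tools are already in place: the trace inequality \eqref{eq:trace_ineq_agg} (valid at the aggregate level by shape regularity of the aggregates), the aggregate inverse inequality \eqref{eq:inverse_ineq_agg}, and the norm equivalence $\|v_h\|_{\bar{V}(h)} \lesssim \|v_h\|_{\bar{V}_h}$ coming from the discrete inverse inequality on aggregated spaces. Since $\boldsymbol{w} = \boldsymbol{0}$ in the analysis, the convective term $c(t,w,\cdot,\cdot)$ drops out and I only need to handle the symmetric diffusion form with the two Nitsche boundary terms (the symmetrising consistency term and the penalty term).

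For \textbf{coercivity}, I would first expand $a_h(v_h,v_h)$, which gives $\mu\|\nabla v_h\|^2_{L^2(\Omega)}$, the penalty term $\sum_{\bar{T}} \beta_{\bar{T}}\|v_h\|^2_{L^2(\bar{T}\cap\partial\Omega_D)}$, and twice the cross term $-2\int_{\partial\Omega_D}\mu(\boldsymbol{n_x}\cdot\nabla v_h)v_h\,\mathrm{d}S$. The heart of the argument is to absorb this cross term. I would bound it by Cauchy--Schwarz as $2\mu\|\boldsymbol{n_x}\cdot\nabla v_h\|_{L^2(\partial\Omega_D)}\|v_h\|_{L^2(\partial\Omega_D)}$, then use Young's inequality with a free parameter $\epsilon$ to split it into a gradient-trace piece and a boundary-$L^2$ piece. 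The gradient-trace piece $\|\boldsymbol{n_x}\cdot\nabla v_h\|^2_{L^2(\partial\Omega_D)}$ is controlled, aggregate by aggregate, by combining the trace inequality \eqref{eq:trace_ineq_agg} applied to $\nabla v_h$ with the inverse inequality \eqref{eq:inverse_ineq_agg}, yielding a bound of the form $C h_A^{-1}\|\nabla v_h\|^2_{L^2(\Omega\cap A)}$; against the definition $\beta_{\bar{T}} = \mu\gamma/h_{\bar{T}}$ this scales correctly so that, for $\gamma$ \emph{large enough}, the penalty term dominates and one retains a positive fraction of both $\mu\|\nabla v_h\|^2_{L^2(\Omega)}$ and the penalty term. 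This recovers $c_\mu\|v_h\|^2_{\bar{V}_h}$. The additional $c_\mu\|v_h\|^2_{L^2(\Omega)}$ control is then obtained from $\mu\|\nabla v_h\|^2_{L^2(\Omega)}$ via a Poincaré--Friedrichs-type inequality on $\Omega$ using the Dirichlet boundary contribution (or a Poincaré inequality combined with the boundary penalty term to handle the seminorm-to-norm upgrade).

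For \textbf{continuity}, the argument is more routine: I would bound each term of $a_h(u,v_h)$ by Cauchy--Schwarz in the appropriate $L^2$ pairings and recognise each factor as a piece of $\|u\|_{\bar{V}(h)}$ or $\|v_h\|_{\bar{V}(h)}$. The diffusion term pairs the two gradient seminorms directly; the penalty term pairs the two boundary norms directly; the two consistency terms require estimating $\|\boldsymbol{n_x}\cdot\nabla(\cdot)\|_{L^2(\partial\Omega_D)}$ against the $H^2$-weighted piece $\sum_{\bar{T}}\mu h_{\bar{T}}^2|\cdot|^2_{H^2(\bar{T}\cap\Omega)}$ appearing in $\|\cdot\|_{\bar{V}(h)}$, which is exactly why that term is included in the $\bar{V}(h)$ norm — the trace inequality \eqref{eq:trace_ineq_agg} converts the boundary gradient trace into interior $L^2$ and $H^1$ norms of $\nabla u$, and the $h_{\bar{T}}$ scalings match $\beta_{\bar{T}}$.

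The \textbf{main obstacle} I anticipate is making the cross-term absorption in the coercivity step rigorous \emph{uniformly in the cut location}. The subtlety is that $\partial\Omega_D$ may cut an aggregate arbitrarily, so the trace inequality must be applied at the level of the aggregate $A$ rather than an individual cut cell $\bar{T}$; the payoff of the aggregation construction is precisely that each aggregate contains a full interior root cell, so the shape-regular constant $C_\omega$ in \eqref{eq:trace_ineq_agg} and the $h_A$ in \eqref{eq:inverse_ineq_agg} are cut-independent. Getting the bookkeeping right — that the sum over cut cells $\bar{T}$ on the boundary can be regrouped into a sum over aggregates $A$ with matching $h_A \sim h_{\bar{T}}$ scaling, so that the required threshold on $\gamma$ is independent of the cut — is the delicate point, and is exactly where the earlier $\bar{V}(h)$ norm-equivalence and aggregate inverse estimate do the essential work.
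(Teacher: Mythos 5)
Your proposal is correct and follows essentially the same route as the paper: the paper's proof simply delegates coercivity and continuity in the $\|\cdot\|_{\bar{V}(h)}$-norm to the Nitsche-type argument of \cite{Badia2022robust} --- precisely the cross-term absorption via aggregate-level trace and inverse inequalities with $\gamma$ large enough that you reconstruct explicitly --- and then only proves the additional $L^2$ control. Your parenthetical fallback (Poincar\'e plus the boundary penalty term) is exactly the paper's step: a generalised Poincar\'e inequality (\cite[Lemma~B.63]{Ern2004}) built on the boundary-mean functional $f(u) = |\partial\Omega_D|^{-1/2}\int_{\partial\Omega_D} u \,\mathrm{d}S$, which is nondegenerate on constants, giving $\|v_h\|_{L^2(\Omega)} \lesssim \|\nabla v_h\|_{L^2(\Omega)} + \|v_h\|_{L^2(\partial\Omega_D)}$ with the boundary term absorbed by the Nitsche penalty.
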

\begin{proof}
  The proof of continuity and coercivity with respect to the norm $\|\cdot\|_{\bar{V}(h)}$ can be found in \cite{Badia2022robust}.   
  We control the $L^2$ term using a generalised Poincar\'e inequality. We define
	\[
		f(u) =|\partial \Omega_D|^{-1/2}\int_{\partial \Omega_D} u \mathrm{~d}S.
	\]
	The restriction of $f$ on constant functions is non-zero. As $\bar{V}_{h,ag} \subset H^1(\Omega)$, using \cite[Lemma.~B.63]{Ern2004} and Cauchy Schwarz inequality yields
	\begin{align*}
		c_{p,\Omega}\|v_h\|_{L^2(\Omega)} &\le\|\nabla v_h \|_{L^2(\Omega)} + |f(v_h)| \lesssim \|\nabla v_h \|_{L^2(\Omega)} + \|v_h\|_{L^2(\partial \Omega_D)}, \label{eq:extended_poincare}
	\end{align*}
	where $c_{p,\Omega} >0$ is a constant that depends only on the domain and order of spatial discretisation.
\end{proof}

\subsection{Stability analysis}\label{sec:stab_st}

\par In this section, we analyse the stability of the fully discretised space-time problem. We introduce a \ac{dg} norm on $V_{h,ag}^n$ as 
\begin{equation}\label{eq:norm_slab}
	\tnor{v}_n^2 \doteq \|v^{n,-}\|^2_{L^2(\Omega^{n})} + \|v^{n-1,+}-v^{n-1,-} \|^2_{L^2(\Omega^{{n-1}})} + c_\mu \int_{J^{n}}^{} \|{v}\|^2_{\bar{V}(h)} \mathrm{~d}t,
\end{equation}
and an accumulated \ac{dg} norm on $V_{h,ag}^1 \times \dots \times V_{h,ag}^n$ as
\begin{equation}\label{eq:norm_st}
	\tnor{v}_{n,*}^2 \doteq  \|v^{n,-}\|^2_{L^2(\Omega^{n})} 
	+\sum_{i=0}^{n-1} \|v^{i,+}-v^{i,-} \|^2_{L^2(\Omega^{{i}})} 
	+ c_\mu \int_{0}^{t^n} \|{v}\|^2_{\bar{V}(h)} \mathrm{~d}t.
\end{equation}
We make repeated use of the following property on the space-time domain. We represent with $n_t$ the temporal component of the space-time normal vector $\boldsymbol{n}$ on $\partial Q$. 
\begin{proposition}\label{prop:time_dep_terms_equality}
	Any function $u \in H^{(0,1)}(Q^n)$ satisfies 
	\begin{equation}
		\begin{aligned}\label{eq:time_terms_equality}
				\int_{Q^n} \partial_t u \mathrm{~}u \mathrm{~d}\boldsymbol{x} \mathrm{~d}t +\int_{\Omega^{n-1}} \llbracket u \rrbracket^{n-1} {u}^{n-1,+} \mathrm{~d}\boldsymbol{x} -  \frac{1}{2} \int_{\partial Q^n_D \cup \partial Q^n_N} n_t u^2  \mathrm{~d}S \\
			= \frac{1}{2}\| u^{n,-} \|_{L^2(\Omega^n)}^2 - \frac{1}{2}\| u^{n-1,-} \|_{L^2(\Omega^{n-1})}^2 + \frac{1}{2}\| u^{n-1,+} - u^{n-1,-} \|_{L^2(\Omega^{n-1})}^2  .
		\end{aligned}
	\end{equation} 
\end{proposition}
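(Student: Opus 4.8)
The plan is to rewrite the time-derivative term as a pure space-time divergence and integrate by parts over the moving slab $Q^n$. First I would use the pointwise identity $\partial_t u \, u = \tfrac12 \partial_t(u^2)$, so that
\[
\int_{Q^n} \partial_t u \, u \, \mathrm{d}\boldsymbol{x}\,\mathrm{d}t = \frac12 \int_{Q^n} \partial_t(u^2) \, \mathrm{d}\boldsymbol{x}\,\mathrm{d}t .
\]
Since $u \in H^{(0,1)}(Q^n)$ gives $u,\partial_t u \in L^2(Q^n)$, Cauchy--Schwarz yields $\partial_t(u^2) = 2u\,\partial_t u \in L^1(Q^n)$, so $u^2$ is $W^{1,1}$ in the temporal variable and the integration by parts below is justified (rigorously by first taking $u$ smooth and closing by density, using that $Q^n$ is Lipschitz and that the temporal endpoint traces $u^{n,-},u^{n-1,+}$ are well defined in $L^2(\Omega)$).

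Next I would regard the integrand as the space-time divergence of the field $(\boldsymbol{0}, \tfrac12 u^2)$ (vanishing spatial components, temporal component $\tfrac12 u^2$) and apply the divergence theorem on $Q^n$, obtaining $\tfrac12\int_{Q^n}\partial_t(u^2) = \tfrac12\int_{\partial Q^n} n_t\, u^2\,\mathrm{d}S$, where $n_t$ is the temporal component of the outward space-time normal. The boundary $\partial Q^n$ splits into the top face $\Omega^n\times\{t^n\}$ (normal purely temporal, $n_t=+1$, trace $u^{n,-}$), the bottom face $\Omega^{n-1}\times\{t^{n-1}\}$ ($n_t=-1$, trace $u^{n-1,+}$) and the lateral part $\partial Q^n_D\cup\partial Q^n_N$ (normal $(\boldsymbol{n_x},n_t)$). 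Collecting the three contributions gives
\[
\int_{Q^n}\partial_t u\, u\,\mathrm{d}\boldsymbol{x}\,\mathrm{d}t = \frac12\|u^{n,-}\|^2_{L^2(\Omega^n)} - \frac12\|u^{n-1,+}\|^2_{L^2(\Omega^{n-1})} + \frac12\int_{\partial Q^n_D\cup\partial Q^n_N} n_t u^2 \, \mathrm{d}S ,
\]
and the lateral integral cancels exactly against the $-\tfrac12\int_{\partial Q^n_D\cup\partial Q^n_N} n_t u^2\,\mathrm{d}S$ already present on the left-hand side of the claimed identity.

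Finally I would absorb the inter-slab jump term purely algebraically. Writing $\llbracket u\rrbracket^{n-1} = u^{n-1,+}-u^{n-1,-}$, a direct expansion gives
\[
\int_{\Omega^{n-1}} \llbracket u\rrbracket^{n-1} u^{n-1,+}\,\mathrm{d}\boldsymbol{x} - \frac12\|u^{n-1,+}\|^2_{L^2(\Omega^{n-1})} = \frac12\|u^{n-1,+}\|^2_{L^2(\Omega^{n-1})} - \int_{\Omega^{n-1}} u^{n-1,+}u^{n-1,-}\,\mathrm{d}\boldsymbol{x} ,
\]
which equals $-\tfrac12\|u^{n-1,-}\|^2_{L^2(\Omega^{n-1})} + \tfrac12\|u^{n-1,+}-u^{n-1,-}\|^2_{L^2(\Omega^{n-1})}$ after completing the square; combining with the previous display yields the stated equality. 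I expect the only genuinely delicate point to be the rigorous application of the space-time divergence theorem on the moving, merely Lipschitz domain $Q^n$ — specifically the correct identification of the temporal normal $n_t=\pm1$ on the horizontal top and bottom faces and the justification of the boundary traces under the low $H^{(0,1)}$ regularity — which the smooth-then-density argument handles; the remaining steps are elementary algebra.
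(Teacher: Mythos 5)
Your proof is correct and follows essentially the same route as the paper's: the space-time Gauss--Green (divergence) theorem applied to $\tfrac12 u^2$ on $Q^n$, identification of $n_t=\pm 1$ on the flat top and bottom faces with traces $u^{n,-}$ and $u^{n-1,+}$, and the same completion-of-the-square manipulation of the jump term. The only difference is that you spell out the density/trace justification that the paper leaves implicit, which is a welcome but inessential refinement.
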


\begin{proof}
	The boundaries of $Q^n$ are $\Omega^n, \Omega^{n-1},\partial Q^n_D,\partial Q^n_N$. 
  The temporal component $n_t$ of the space-time normal vector $\boldsymbol{n}$ takes the following values on $\partial Q^n$ : $n_t = -1$ on $\Omega^{n-1}$ and $1$ on $\Omega^n$.    
	Using the Gauss-Green theorem, 
  we get
	\begin{equation*}
		\begin{aligned}
			\int_{Q^n} \partial_t u \mathrm{~}u \mathrm{~d}\boldsymbol{x} \mathrm{~d}t  &= \frac{1}{2} \int_{\partial Q^n} n_t u^2  \mathrm{~d}S \\&= \frac{1}{2} \|u^{n,-}\|_{L^2(\Omega^n)}^2 -  \frac{1}{2} \|u^{n-1,+}\|_{L^2(\Omega^{n-1})}^2 + \frac{1}{2} \int_{\partial Q^n_D \cup \partial Q^n_N} n_t u^2  \mathrm{~d}S. 
		\end{aligned}
	\end{equation*}
After some algebraic manipulations, we get:
	\begin{align*}
		\int_{\Omega^{n-1}} \llbracket u \rrbracket^{n-1} {u}^{n-1,+} \mathrm{~d}\boldsymbol{x} = \frac{1}{2} \|u^{n-1,+}\|_{L^2(\Omega^{n-1})}^2 + \frac{1}{2} \|u^{n-1,+} -u^{n-1,-} \|_{L^2(\Omega^{n-1})}^2 - \frac{1}{2} \|u^{n-1,-}\|_{L^2(\Omega^{n-1})}^2.
	\end{align*}
  Combining these results, we prove the proposition.
\end{proof}

\begin{proposition}[Local stability estimate]\label{prop:stability_st}
	The bilinear form $B_h^n(\cdot,\cdot)$ satisfies 
  \[
   c_\mu \| v_h \|_{L^2(Q^n)}^2  +  \tnor{v_h}_n^2 \lesssim B_h^n(v_h,v_h) + \|v_{h}^{n-1,-}\|_{L^2(\Omega^{n-1})}^2, \ \forall \ v_h \in {V}_{h,ag}^n
  \] 
  for $\gamma$ large enough.
\end{proposition}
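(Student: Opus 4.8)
The plan is to test the bilinear form with $u_h = v_h$ and to exploit the two structural results already available: the integration-by-parts identity of Proposition~\ref{prop:time_dep_terms_equality} for the temporal terms and the spatial coercivity of Proposition~\ref{prop:coercivity_space} for the elliptic part. Since $\boldsymbol{w} = \boldsymbol{0}$ in the analysis, the convective contribution $c(t,\boldsymbol{w},v_h,v_h)$ vanishes, leaving
\[
B_h^n(v_h,v_h) = \int_{Q^n} \partial_t v_h\, v_h \,\mathrm{d}\boldsymbol{x}\,\mathrm{d}t + \int_{\Omega^{n-1}} \jump{v_h}^{n-1} v_h^{n-1,+}\,\mathrm{d}\boldsymbol{x} + \int_{J^n} a_h(t,v_h,v_h)\,\mathrm{d}t.
\]
First I would apply Proposition~\ref{prop:time_dep_terms_equality} to the first two terms, rewriting them as $\tfrac12\|v_h^{n,-}\|_{L^2(\Omega^n)}^2 - \tfrac12\|v_h^{n-1,-}\|_{L^2(\Omega^{n-1})}^2 + \tfrac12\|v_h^{n-1,+}-v_h^{n-1,-}\|_{L^2(\Omega^{n-1})}^2$ plus the boundary contribution $\tfrac12\int_{\partial Q_D^n\cup\partial Q_N^n} n_t v_h^2\,\mathrm{d}S$. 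The first and third of these already furnish the two $L^2$ jump/trace terms appearing in $\tnor{\cdot}_n$, while $-\tfrac12\|v_h^{n-1,-}\|_{L^2(\Omega^{n-1})}^2$ is transferred to the right-hand side of the claimed inequality.

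Next I would dispose of the boundary integral. On $\partial Q_N^n$ the well-posedness hypothesis (with $\boldsymbol{w}=\boldsymbol{0}$, namely $n_t\ge 0$) makes $\tfrac12\int_{\partial Q_N^n} n_t v_h^2\,\mathrm{d}S \ge 0$, so it is simply discarded from the lower bound. The Dirichlet part is the delicate one, since $n_t$ may have either sign there; I would control it by relating the space-time surface measure to the time-integrated spatial one. Parametrising the lateral boundary gives $n_t\,\mathrm{d}S = (n_t/|\boldsymbol{n_x}|)\,\mathrm{d}S_{\boldsymbol{x}}\,\mathrm{d}t$, where $n_t/|\boldsymbol{n_x}|$ equals, up to sign, the normal velocity of $\partial\Omega_D(t)$; because $\boldsymbol{\varphi}_t$ is a smooth diffeomorphism on the compact interval $[0,T]$, this velocity is bounded by a constant $C_V$ independent of $h$ and the cut location (and $|\boldsymbol{n_x}|$ is bounded away from zero). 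Hence $|\tfrac12\int_{\partial Q_D^n} n_t v_h^2\,\mathrm{d}S| \le \tfrac{C_V}{2}\int_{J^n}\|v_h\|_{L^2(\partial\Omega_D(t))}^2\,\mathrm{d}t$.

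Finally I would invoke the coercivity estimate $a_h(v_h,v_h) \ge c_\mu(\|v_h\|_{\bar{V}_h}^2 + \|v_h\|_{L^2(\Omega)}^2)$ of Proposition~\ref{prop:coercivity_space}, integrate it over $J^n$, and use that the Nitsche penalty contribution $\sum_{\bar{T}} \beta_{\bar{T}}\|v_h\|_{L^2(\partial\Omega_D\cap\bar{T})}^2$ inside $\|v_h\|_{\bar{V}_h}^2$ carries the factor $\beta_{\bar{T}} = \mu\gamma/h_{\bar{T}}$. Since $C_V$ is independent of $\gamma$, taking $\gamma$ large enough lets me absorb the negative Dirichlet boundary term into a fraction of this penalty, while retaining a positive multiple of $\int_{J^n}\|v_h\|_{\bar{V}_h}^2\,\mathrm{d}t$; combined with the inverse inequality $\|v_h\|_{\bar{V}(h)}\lesssim\|v_h\|_{\bar{V}_h}$ this produces the term $c_\mu\int_{J^n}\|v_h\|_{\bar{V}(h)}^2\,\mathrm{d}t$ of $\tnor{\cdot}_n$, whereas the $L^2(\Omega)$ part integrates to $c_\mu\|v_h\|_{L^2(Q^n)}^2$. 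Collecting the surviving terms and keeping $\tfrac12\|v_h^{n-1,-}\|_{L^2(\Omega^{n-1})}^2$ on the right reproduces the estimate. I expect the main obstacle to be precisely this Dirichlet boundary term: making the measure conversion rigorous and verifying that the absorption threshold on $\gamma$ is genuinely independent of $h$ and of the cut configuration is the crux of the argument; the remaining steps are mechanical applications of the two cited propositions.
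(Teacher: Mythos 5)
Your proposal follows the paper's proof essentially verbatim: test the form with $v_h$, apply Proposition~\ref{prop:time_dep_terms_equality} to rewrite the temporal terms, discard the nonnegative Neumann contribution using $n_t \ge 0$, absorb the sign-indefinite Dirichlet boundary term into the Nitsche penalty for $\gamma$ large enough, and integrate the coercivity of Proposition~\ref{prop:coercivity_space} over $J^n$ to produce the $c_\mu\|v_h\|^2_{L^2(Q^n)}$ and $c_\mu\int_{J^n}\|v_h\|^2_{\bar{V}(h)}$ terms. If anything, your treatment of the Dirichlet term is more careful than the paper's compressed justification (``since $|n_t|\le 1$, choosing $\gamma$ such that $\beta_{\bar{T}} > 1/c_\mu$''), since you make explicit the conversion $n_t\,\mathrm{d}S$ to $\mathrm{d}S_{\boldsymbol{x}}\,\mathrm{d}t$ via the normal velocity, bounded uniformly by the smoothness of $\boldsymbol{\varphi}_t$, which the paper leaves implicit.
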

\begin{proof}
	Let $v_h \in V^n_{h,ag}$. Using the coercivity of the spatial bilinear form for all $t \in (0,T]$ in Prop.~\ref{prop:coercivity_space} and  \eqref{eq:time_terms_equality}, we get:
	\begin{align*}
		& B_h^n(v_h,v_h) = \int_{Q^n} \partial_t v_h \mathrm{~} v_h \mathrm{~d}\boldsymbol{x} \mathrm{~d}t 
		+ \int_{\Omega^{n-1}} \llbracket v_h \rrbracket^{n-1} {v}_{h}^{n-1,+} \mathrm{~d}\boldsymbol{x} + \int_{J^n} a_h(t,v_h,v_h) \mathrm{~d} t \\
    & \geq   \frac{1}{2}\| v_h^{n,-} \|_{L^2(\Omega^n)}^2   + \frac{1}{2}\| v_h^{n-1,+} - v_h^{n-1,-} \|_{L^2(\Omega^{n-1})}^2 + c_\mu \int_{J^n}^{}   \left( \|v_h\|^2_{\bar{V}_h} + \|v_h\|^2_{L^2(\Omega(t))} \right)\mathrm{d} t \\
    & \qquad  - \frac{1}{2}\| v_h^{n-1,-} \|_{L^2(\Omega^{n-1})}^2 + \frac{1}{2} \int_{\partial Q^n_D \cup \partial Q^n_N} n_t v_h^2  \mathrm{~d}S.
	\end{align*} 
	We require $n_t >0 \textrm{ on } \partial Q_N^n$ for the continuous problem to be well posed. Since $|n_t| \le 1$, choosing $\gamma$ such that {$\beta_{\bar{T}} > 1/{c_\mu}$}, we get
	\begin{equation*}
		\tnor{v_h}_n^2 +  c_\mu \| v_h \|_{L^2(Q^n)}^2 \lesssim  \| v^{n-1,-}_{h} \|_{L^2(\Omega^{n-1})}^2 + B_h^n(v_h,v_h).
	\end{equation*}
\end{proof}

\begin{proposition}[Continuity]\label{prop:continuity_st}
	The bilinear form $B_h^n(\cdot,\cdot)$ satisfies
  \begin{align}\label{eq:continuity}
        B_{h}^{n}(u,v_{h}) \lesssim 
        & \left( \|\partial_t u\|_{L^2(Q^n)}^2 + \|u^{n-1,+}-u^{n-1,-}\|^2_{L^2(\Omega^{n-1})} + C_\mu \int_{J^n}^{} \|u\|^2_{\bar{V}(h)}\mathrm{~d}t\right) \nonumber\\ & \times 
        \left( \| v_h\|_{L^2(Q^n)}^2 + \|v^{n-1,+}_{h}\|^2_{L^2(\Omega^{n-1})} + C_\mu \int_{J^{n}}^{} \|v_h\|^2_{\bar{V}(h)}\mathrm{~d}t\right)
  \end{align}
  for any $u \in H^{(2,1)}(Q^n)$ and $v_h \in V_{h,ag}^{n}$. 
\end{proposition}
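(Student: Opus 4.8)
The plan is to estimate $B_h^n(u,v_h)$ term by term, matching each contribution to one summand of the $u$-factor and one summand of the $v_h$-factor on the right-hand side of \eqref{eq:continuity}, and then to combine the three partial bounds by a discrete Cauchy--Schwarz inequality. Since the analysis assumes $\boldsymbol{w}=\boldsymbol{0}$, the convective term vanishes and, by \eqref{eq:FEOp_slabwise_lhs}, $B_h^n(u,v_h)$ reduces to three pieces: the time-derivative term $\int_{Q^n}\partial_t u\,v_h\,\mathrm{d}\boldsymbol{x}\,\mathrm{d}t$, the inter-slab jump term $\int_{\Omega^{n-1}}\llbracket u\rrbracket^{n-1}v_h^{n-1,+}\,\mathrm{d}\boldsymbol{x}$, and the spatial term $\int_{J^n}a_h(t,u,v_h)\,\mathrm{d}t$.

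First I would bound the time-derivative term by the Cauchy--Schwarz inequality on $Q^n$, giving $\int_{Q^n}\partial_t u\,v_h \le \|\partial_t u\|_{L^2(Q^n)}\,\|v_h\|_{L^2(Q^n)}$; the regularity $u\in H^{(2,1)}(Q^n)$ is precisely what guarantees $\partial_t u\in L^2(Q^n)$, so this supplies the first summand in each bracket. The jump term is handled by the Cauchy--Schwarz inequality on $\Omega^{n-1}$: writing $\llbracket u\rrbracket^{n-1}=u^{n-1,+}-u^{n-1,-}$, we get $\int_{\Omega^{n-1}}\llbracket u\rrbracket^{n-1}v_h^{n-1,+}\le \|u^{n-1,+}-u^{n-1,-}\|_{L^2(\Omega^{n-1})}\,\|v_h^{n-1,+}\|_{L^2(\Omega^{n-1})}$, which yields the second summand in each bracket (the traces are well defined because $u\in H^{(0,1)}(Q^n)$ is continuous in time in the $L^2$ sense, with $u^{n-1,-}$ inherited from the previous slab).

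The spatial term carries the real content. For each fixed $t\in J^n$ I would invoke the spatial continuity bound $a_h(t,u,v_h)\le C_\mu\|u\|_{\bar V(h)}\|v_h\|_{\bar V(h)}$ from Proposition~\ref{prop:coercivity_space}, which applies since $u(\cdot,t)\in H^2(\Omega(t))\subset\bar V(h)$ and $v_h(\cdot,t)\in\bar V_{h,ag}$, and then apply the Cauchy--Schwarz inequality in time to obtain $\int_{J^n}a_h(t,u,v_h)\,\mathrm{d}t\le C_\mu\big(\int_{J^n}\|u\|^2_{\bar V(h)}\,\mathrm{d}t\big)^{1/2}\big(\int_{J^n}\|v_h\|^2_{\bar V(h)}\,\mathrm{d}t\big)^{1/2}$, supplying the third summand of each bracket. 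Collecting the three estimates, the sum takes the form $\sum_{i=1}^3 X_iY_i$, with $X_i$ the square roots of the summands of the $u$-bracket and $Y_i$ those of the $v_h$-bracket; a final discrete Cauchy--Schwarz inequality $\sum_i X_iY_i\le(\sum_i X_i^2)^{1/2}(\sum_i Y_i^2)^{1/2}$ bounds $B_h^n(u,v_h)$ by the geometric mean of the two bracketed quantities, which is the continuity estimate \eqref{eq:continuity} (up to absorbing $C_\mu$ into $\max\{1,C_\mu\}$ inside the brackets).

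I expect the only delicate step to be the spatial term: one must ensure that Proposition~\ref{prop:coercivity_space} holds \emph{pointwise in time} on the moving domain $\Omega(t)$ with a constant $C_\mu$ independent of $t$ and of the cut configuration, so that it can be pulled out of the time integral. This uniformity rests on the slab-wise tensor-product structure of $V_{h,ag}^n$, which keeps the spatial \ac{agfe} space and its shape-regular aggregates controlled across $J^n$, so that the trace and inverse inequalities \eqref{eq:trace_ineq_agg}--\eqref{eq:inverse_ineq_agg} hold with $t$-independent constants. The remaining manipulations are routine applications of Cauchy--Schwarz. Should the convective term be retained, it would be estimated analogously using $\boldsymbol{w}\in L^\infty(Q)$ and absorbed into the $\bar V(h)$-integral contributions.
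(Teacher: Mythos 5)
Your proof is correct and follows essentially the same route as the paper's (one-line) proof: Cauchy--Schwarz on $Q^n$ for the time-derivative term, Cauchy--Schwarz on $\Omega^{n-1}$ for the jump term, the spatial continuity of Prop.~\ref{prop:coercivity_space} applied pointwise in time for the $a_h$ term, and a final discrete Cauchy--Schwarz to assemble the brackets. The only remark worth making is that your derivation naturally produces the bound with the \emph{square roots} of the two bracketed quantities (their geometric mean), which is evidently what the paper intends; the squared form as printed in \eqref{eq:continuity} appears to be a typographical slip, and your version is the standard continuity estimate.
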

\begin{proof}
The result can readily be obtained using Cauchy-Schwarz inequality and the continuity in Prop.~\ref{prop:coercivity_space}.
\end{proof}

\begin{proposition}[Galerkin orthogonality]\label{prop:orthogonality}
If $u$ solves the model problem \eqref{eq:heat_eqn} and $u_h$ solves the discrete problem \eqref{eq:weak_form_st}, then $B_h^n(u-u_h,v_h) = 0$ for all $u \in H^{(2,1)}(Q)$ and $v_h \in V_{h,ag}^n$.
\end{proposition}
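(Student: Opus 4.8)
The plan is to run the standard two-step consistency argument. Since $u_h$ solves \eqref{eq:weak_form_st} by definition, we have $B_h^n(u_h,v_h)=L_h^n(v_h)$ for every $v_h\in V_{h,ag}^n$, so it suffices to establish that the exact solution satisfies the \emph{same} identity, i.e.\ that the Nitsche--\ac{dg} discretisation is consistent: $B_h^n(u,v_h)=L_h^n(v_h)$. Subtracting the two identities then immediately gives $B_h^n(u-u_h,v_h)=0$, which is the claim. Thus essentially all the work lies in verifying consistency, and the assumed regularity $u\in H^{(2,1)}(Q)$ is precisely what is needed to give meaning to $\Delta u \in L^2$, $\partial_t u \in L^2$, and to the pointwise traces on $\partial Q_D^n$ and $\partial Q_N^n$ that appear below.

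To verify $B_h^n(u,v_h)=L_h^n(v_h)$ I would evaluate \eqref{eq:FEOp_slabwise_lhs} at the exact solution, term by term. First, because the continuous solution is continuous across inter-slab interfaces, $\llbracket u \rrbracket^{n-1}=u^{n-1,+}-u^{n-1,-}=0$, so the jump term drops out. Next, for a.e.\ $t\in J^n$ I would integrate the diffusion term by parts in space over $\Omega(t)$,
\[
\int_{\Omega(t)}\mu\nabla u\cdot\nabla v_h\,\mathrm{d}\boldsymbol{x}
= -\int_{\Omega(t)}\mu\Delta u\, v_h\,\mathrm{d}\boldsymbol{x}
+\int_{\partial\Omega_D(t)}\mu(\boldsymbol{n_x}\cdot\nabla u)v_h\,\mathrm{d}S
+\int_{\partial\Omega_N(t)}\mu(\boldsymbol{n_x}\cdot\nabla u)v_h\,\mathrm{d}S,
\]
splitting the boundary flux into its Dirichlet and Neumann parts. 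The Dirichlet part cancels exactly against the adjoint-consistency term $-\int_{\partial\Omega_D(t)}\mu(\boldsymbol{n_x}\cdot\nabla u)v_h\,\mathrm{d}S$ already built into $a_h$, while on $\partial\Omega_N(t)$ the Neumann condition $\mu\nabla u\cdot\boldsymbol{n_x}=g_N$ turns the remaining surface integral into the Neumann data term of $l_h$.

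With the volume term rewritten, I would then insert the strong equation $-\mu\Delta u = f-\partial_t u-\boldsymbol{w}\cdot\boldsymbol{\nabla}u$ from \eqref{eq:heat_eqn}. After integrating over $J^n$, the resulting $-\int_{Q^n}\partial_t u\, v_h$ and $-\int_{J^n}c(t,\boldsymbol{w},u,v_h)\,\mathrm{d}t$ contributions cancel the explicit time-derivative and convection terms of $B_h^n$, leaving only $\int_{J^n}\int_{\Omega(t)}f\,v_h\,\mathrm{d}\boldsymbol{x}\,\mathrm{d}t$. Finally, the Dirichlet trace $u=g_D$ on $\partial\Omega_D(t)$ converts the penalty term $\sum_T\int_{\partial\Omega_D(t)\cap\bar T}\beta_{\bar T}u\,v_h\,\mathrm{d}S$ and the symmetrising term $-\int_{\partial\Omega_D(t)}\mu(\boldsymbol{n_x}\cdot\nabla v_h)u\,\mathrm{d}S$ into precisely the corresponding $g_D$-terms of \eqref{eq:FEOp_slabwise_rhs}. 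Collecting everything reproduces $L_h^n(v_h)$ exactly.

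The argument is structurally routine, but the step requiring genuine care—really the crux of consistency—is that \emph{every} boundary term generated by integration by parts must be matched by a term deliberately placed in the Nitsche form $a_h$: the adjoint-consistency surface integral on $\partial\Omega_D(t)$ exists exactly so that the Dirichlet part of the flux cancels, and the penalty and symmetrising terms are consistent because they reduce to their prescribed-data counterparts once $u=g_D$ is substituted. I would also stress at this point that no cut-cell or aggregation structure enters the proof: the aggregation only restricts the trial/test space via $V_{h,ag}^n\subset V_{h,act}^n$ and affects neither the definition of $B_h^n$ and $L_h^n$ nor the regularity of $u$, so the consistency identity—and hence the orthogonality obtained by subtraction—holds verbatim.
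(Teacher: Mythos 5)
Your proposal is correct and follows essentially the same route as the paper's proof: show consistency, i.e.\ $B_h^n(u,v_h)=L_h^n(v_h)$, using the vanishing inter-slab jump of the regular solution $u$, spatial integration by parts with the strong equation, and the Dirichlet condition $u=g_D$ to absorb the Nitsche terms, then subtract the discrete equation \eqref{eq:weak_form_st}. The paper compresses this into three lines, while you spell out the term-by-term cancellations, but the underlying argument is identical.
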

\begin{proof}
	Since $u \in H^{(2,1)}(Q), \llbracket u \rrbracket^n = 0$ a.e. in $\Omega^n$, for $n = 0,1,2, \dots N$. Using $u = g_D$ on $\partial Q_D^n$ and integration by parts yields $B_h^n(u,v_h) = L_h^n(v_h)$, for all $v_h \in V_{h,ag}^n \subset H^{(1,0)}(Q^n)$. Using \eqref{eq:weak_form_st}, we prove the result.
\end{proof}

\subsection{Anisotropic space-time approximation}\label{sub:interpolators}

\par In this section, we introduce a space-time $L^2$ projector $\pi_{h\tau}^n$  that will be useful in the \emph{a priori} error analysis. The key properties of this projector are that it satisfies anisotropic error estimates and the corresponding approximation error cancels the time derivative term.

To do so, we define several projectors, some of them defined on functions extended to the active domain $Q^n_{act}$. This extension is needed in the following analysis to end up with anisotropic estimates for the space-time \ac{agfe} spaces. Isotropic estimates can be readily obtained without this requirement by using the interpolator in \cite[Lemma~5.11]{Badia2018aggregated} in the space-time domain and the continuity of $\mathcal{E}^n$. The continuity of $\mathcal{E}^n$ is a consequence of its tensor-product definition and the continuity of the space-only extension in \cite[Cor.~5.3]{Badia2018aggregated}: 
\begin{equation}\label{eq:continuity_extension}
	\|\bar{\mathcal{E}} v\|_{L^2(\Omega_{act})} \lesssim \|v\|_{L^2(\Omega_{in})} , \quad \forall v \in \bar{V}_{h,in}.
\end{equation}
First, we introduce $L^2$ projectors for space-only (in $\Omega_{act}^n$ and $\Omega$) and time-only functions (in $J^n$), resp.:
\begin{align*} &
\bar{\pi}^n_{h,act}: L^{2}(\Omega_{act}^n) \rightarrow \bar{V}_{h,ag}^n \ : \ \int_{\Omega^n_{act}} (u - \bar{\pi}_{h,act}^n(u)) v_h \mathrm{~d} \boldsymbol{x} = 0, \quad \forall v_h \in \bar{V}_{h,ag}^n, \\ &
  \bar{\pi}^n_{h}: L^{2}(\Omega^{n}) \rightarrow \bar{V}_{h,ag}^n \ : \
\int_{\Omega^n} (u - \bar{\pi}_{h}^n(u)) v_h \mathrm{~d} \boldsymbol{x} = 0, \quad \forall v_h \in \bar{V}_{h,ag}^n, \\ &
\bar{\pi}_{\tau}^{n}: L^2(J^n) \rightarrow \mathcal{P}_{q}(J^n) \ : \ \bar{\pi}^n_{\tau}(u)(t^n) = u(t^n)
\ , \ \int_{J^n} (u - \bar{\pi}_{\tau}^n(u)) v_\tau \mathrm{~d} t = 0, \quad \forall v_\tau \in \mathcal{P}_{q-1}(J^n),
\end{align*} 
for $q>0$. Next, we define space-time semi-discrete projectors:
\begin{align*} &
\pi^n_{h,act}: L^2(Q^n_{act}) \rightarrow \bar{V}_{h,ag}^n \otimes L^2(J^n) \ : \ \int_{Q^n_{act}} (u - \pi_{h,act}^n(u)) v_h \mathrm{~d} \boldsymbol{x} \mathrm{~d} t= 0, \quad \forall v_h \in \bar{V}_{h,ag}^n \otimes L^2(J^n), \\ &
  {\pi}_{\tau,act}^n: L^2(\Omega_{act}^n) \otimes \mathcal{C}^0(J^n) \rightarrow L^2(\Omega_{act}^n) \otimes \mathcal{P}_{q}(J^n) \ : \ \pi_{\tau,act}^n(u)(t^n) = u(t^n), \\ & \qquad \qquad \qquad \int_{Q^n_{act}}^{} (u - \pi_{\tau,act}^n(u)) v_\tau  \mathrm{~d} \boldsymbol{x} \mathrm{~d} t = 0, \quad \forall v_\tau \in L^2(\Omega^n_{act}) \otimes \mathcal{P}_{q-1}(J^n), \quad q > 0.
\end{align*}  

\begin{lemma}\label{lm:equivalence-projections}
  The following equivalences hold in $L^2(Q^n_{act})$-sense:
  \begin{align}
    & \bar{\pi}_{h,act}^{n}(u(\cdot,t))(\boldsymbol{x})  = \pi_{h,act}^n (u)(\boldsymbol{x},t), \quad && \forall u \in L^2(\Omega^n_{act}) \otimes \mathcal{C}^0(J^n),\label{eq:equiv-space}\\ & 
    \bar{\pi}_{\tau}^{n}(u(\boldsymbol{x},\cdot))(t)  = \pi_{\tau,act}^n (u)(\boldsymbol{x},t),\quad && \forall u \in H^1(\Omega^n_{act}) \otimes L^2(J^n).\label{eq:equiv-time}
  \end{align}   
\end{lemma}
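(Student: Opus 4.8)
The plan is to prove both equivalences by the same template: for each identity I define the candidate obtained by applying the fibre projector pointwise, show that it belongs to the tensor-product target space of the corresponding space-time projector, verify that it satisfies the defining (variational and/or interpolation) conditions of that projector, and then invoke uniqueness to identify the two. Throughout, I separate variables on elementary tensors and extend by linearity, using Fubini's theorem to split each space-time integral into a spatial integral of a temporal one (or vice versa).

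For \eqref{eq:equiv-space}, set $w(\boldsymbol{x},t)\doteq\bar{\pi}^n_{h,act}(u(\cdot,t))(\boldsymbol{x})$. First I would show $w\in\bar{V}^n_{h,ag}\otimes L^2(J^n)$: fixing a basis $\{\psi_i\}$ of the finite-dimensional space $\bar{V}^n_{h,ag}$, the fibre-wise projection reads $w(\cdot,t)=\sum_i c_i(t)\psi_i$, where the coefficient vector solves $M\,c(t)=b(t)$ with the $t$-independent, invertible Gram matrix $M_{ij}=\int_{\Omega^n_{act}}\psi_i\psi_j\,\mathrm{d}\boldsymbol{x}$ and right-hand side $b_i(t)=\int_{\Omega^n_{act}}u(\cdot,t)\,\psi_i\,\mathrm{d}\boldsymbol{x}$. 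Since $u\in L^2(\Omega^n_{act})\otimes\mathcal{C}^0(J^n)$, each $b_i$ is continuous in $t$, so the components of $c=M^{-1}b$ belong to $L^2(J^n)$ and $w$ is an admissible element of the tensor space. It then suffices to test the orthogonality condition of $\pi^n_{h,act}$ on an elementary tensor $v_h=\psi_j\,g_j$ with $g_j\in L^2(J^n)$; by Fubini the space-time integral equals $\int_{J^n}g_j(t)\big(\int_{\Omega^n_{act}}(u-w)(\cdot,t)\,\psi_j\,\mathrm{d}\boldsymbol{x}\big)\mathrm{d}t$, whose inner factor vanishes for a.e.\ $t$ by the definition of $\bar{\pi}^n_{h,act}$. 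Since such tensors span $\bar{V}^n_{h,ag}\otimes L^2(J^n)$, uniqueness of the $L^2(Q^n_{act})$-orthogonal projection onto this closed subspace gives $w=\pi^n_{h,act}(u)$.

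For \eqref{eq:equiv-time}, set $z(\boldsymbol{x},t)\doteq\bar{\pi}^n_{\tau}(u(\boldsymbol{x},\cdot))(t)$. Fixing a basis $\{\chi_k\}_{k=0}^q$ of $\mathcal{P}_q(J^n)$, the defining conditions of $\bar{\pi}^n_\tau$ (interpolation at $t^n$ together with $L^2(J^n)$-orthogonality to $\mathcal{P}_{q-1}(J^n)$) express the coefficients $d_k(\boldsymbol{x})$ as fixed linear combinations of the trace $u(\boldsymbol{x},t^n)$ and the moments $\int_{J^n}u(\boldsymbol{x},\cdot)\,\sigma\,\mathrm{d}t$, $\sigma\in\mathcal{P}_{q-1}(J^n)$, so that $d_k\in L^2(\Omega^n_{act})$ and $z\in L^2(\Omega^n_{act})\otimes\mathcal{P}_q(J^n)$. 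The interpolation condition $z(\cdot,t^n)=u(\cdot,t^n)$ holds in $L^2(\Omega^n_{act})$ fibre-wise, and for any $v_\tau=\rho\,\sigma$ with $\sigma\in\mathcal{P}_{q-1}(J^n)$ Fubini reduces $\int_{Q^n_{act}}(u-z)v_\tau$ to $\int_{\Omega^n_{act}}\rho\,\big(\int_{J^n}(u-z)(\boldsymbol{x},\cdot)\,\sigma\,\mathrm{d}t\big)\mathrm{d}\boldsymbol{x}$, whose inner factor vanishes for a.e.\ $\boldsymbol{x}$ by the definition of $\bar{\pi}^n_\tau$. Uniqueness of $\pi^n_{\tau,act}$ then gives $z=\pi^n_{\tau,act}(u)$.

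The routine steps are the Fubini manipulations and the reduction to elementary tensors. The main obstacle is the interface between the fibre and the tensor structure: for \eqref{eq:equiv-space} one must argue that the fibre-wise projection depends measurably (indeed continuously) on $t$ so that $w$ genuinely lies in $\bar{V}^n_{h,ag}\otimes L^2(J^n)$; for \eqref{eq:equiv-time} one must make sense of the pointwise trace $u(\boldsymbol{x},t^n)$ for the stated regularity and verify that the interpolation constraint, which makes $\pi^n_{\tau,act}$ a \emph{non-orthogonal} projector, still determines it uniquely. This last point I would settle by testing the defining conditions of $\pi^n_{\tau,act}$ fibre-wise: they coincide with the $q+1$ conditions that define $\bar{\pi}^n_\tau$ on the $(q+1)$-dimensional space $\mathcal{P}_q(J^n)$, whose unisolvence (already implicit in the well-definedness of $\bar{\pi}^n_\tau$) forces the homogeneous problem to have only the trivial solution for a.e.\ $\boldsymbol{x}$, hence uniqueness.
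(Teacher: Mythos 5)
Your proof is correct, and it takes a genuinely different route from the paper. The paper proves neither identity directly: it cites Lehrenfeld's thesis (Lemma~3.3.5) for \eqref{eq:equiv-space}, and obtains \eqref{eq:equiv-time} by a density argument in the \emph{spatial} variable, approximating $u\in H^1(\Omega^n_{act})$ by continuous $u_\epsilon$ with $\|u-u_\epsilon\|_{L^2(\Omega^n_{act})}\to 0$. You instead verify both identities from first principles: you show the fibre-wise projection lands in the tensor-product target space (via the $t$-independent, invertible Gram system for \eqref{eq:equiv-space}, and via the trace-plus-moments representation of the polynomial coefficients for \eqref{eq:equiv-time}), check the defining conditions against elementary tensors by Fubini, and conclude by uniqueness. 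In particular, your observation that $\pi^n_{\tau,act}$ is not an orthogonal projector but is nonetheless uniquely determined — because its conditions tested fibre-wise reduce to the $q+1$ unisolvent conditions defining $\bar{\pi}^n_\tau$ on the $(q+1)$-dimensional space $\mathcal{P}_q(J^n)$ — is exactly the point the paper's one-line proof glosses over. What your route buys: it is self-contained (no external lemma, no density step), and it reveals that \eqref{eq:equiv-time} needs no spatial regularity beyond $L^2(\Omega^n_{act})$ — the hypothesis $H^1(\Omega^n_{act})$ in the paper serves only to feed its density argument. What the paper's route buys is brevity.

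One caveat, which you flag but do not resolve — and which the paper's statement shares: under the stated class $H^1(\Omega^n_{act})\otimes L^2(J^n)$, the fibres $u(\boldsymbol{x},\cdot)$ lie only in $L^2(J^n)$, so the interpolation conditions $\bar{\pi}^n_\tau(u)(t^n)=u(t^n)$ and $\pi^n_{\tau,act}(u)(t^n)=u(t^n)$ are meaningless; some time-continuity is needed, consistent with the declared domain $L^2(\Omega^n_{act})\otimes\mathcal{C}^0(J^n)$ of $\pi^n_{\tau,act}$. This is a defect of the lemma's hypotheses rather than of your argument, and it is harmless downstream, since the lemma is only invoked for $u\in H^{(p+1,q+1)}(Q^n_{act})$ with $q>0$, whose fibres embed into $\mathcal{C}^0(\overline{J^n})$; but a complete write-up should state the regularity under which the trace at $t^n$ is defined before invoking uniqueness.
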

\begin{proof}
The first result has been proven in \cite[Lemma~3.3.5]{Lehrenfeld:462743}. The second result can be proved using the fact that one can approximate any function in $u \in H^1(\Omega^n_{act})$ by $u_\epsilon \in \mathcal{C}^0(\Omega^n_{act})$ such that $\|u - u_\epsilon\|_{L^2(\Omega^n_{act})} \to 0$ as $\epsilon \to 0$. 
\end{proof}

Finally, we define the fully discrete space-time projector \mbox{$\pi_{h\tau}^n : H^{(0,1)}(Q^n) \rightarrow V_{h,ag}^n = \bar{V}_{h,ag}^n \otimes \mathcal{P}_{q}(J^n)$} such that $\pi_{h\tau}^n (u)(t^n)= \bar{\pi}_{h}^n(u(t^n))$ and
		\begin{equation}\label{eq:local_projector_fem}
			\int_{Q^n} (u-\pi^n_{h\tau} u) v_{h\tau} \mathrm{~d} \boldsymbol{x} \mathrm{~d}t = 0, \mathrm{~} \quad \forall v_{h\tau} \in \bar{V}_{h,ag}^{n}\otimes \mathcal{P}_{q-1}(J^n) , \quad q > 0, 
		\end{equation} 
and the analogous on $Q^n_{act}$, represented with  $\pi_{h\tau,act}^{n}$ and defined as the one above by replacing $Q^n \leftarrow Q^n_{act}$. 

Next, we prove approximation error bounds for the space and space-time \ac{agfe} spaces. In the following result, we denote with $\Omega_{act}$ the set of spatial active cells at any given time $t \in(0, T]$. The time dependency is eliminated to avoid cumbersome notations.

\begin{proposition}[Approximation error in space]\label{prop:proj_error_time_points}
	Let $u \in H^{p+1}(\Omega_{act})$ and $\bar{V}_{h,ag}$ be of order $p$. The following result holds:
	\begin{equation}\label{eq:proj_err_time_point}
		\| u - \bar{\pi}_{h,act}(u) \|_{H^s(\Omega)} \lesssim  h^{p+1-s} |u |_{H^{p+1}(\Omega_{act})}, \quad \hbox{for $0 \leq s \leq p+1$.}
	\end{equation}
\end{proposition}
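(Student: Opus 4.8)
The plan is to derive the bound from a triangle inequality that isolates a classical interpolation error from a difference of two discrete functions, and then to trade the spatial derivatives of that discrete difference for negative powers of $h$ by means of the aggregate inverse inequality \eqref{eq:inverse_ineq_agg}, closing the argument with the $L^2(\Omega_{act})$-optimality of $\bar\pi_{h,act}$.

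First I would invoke a quasi-interpolant $\bar I_h : H^{p+1}(\Omega_{act}) \to \bar V_{h,ag}$ satisfying the optimal spatial estimate $\| u - \bar I_h u \|_{H^s(\Omega_{act})} \lesssim h^{p+1-s}|u|_{H^{p+1}(\Omega_{act})}$ for $0 \le s \le p+1$, as furnished by the AgFEM interpolation analysis in \cite[Lemma~5.11]{Badia2018aggregated}. The structural fact underpinning this is that every aggregate contains exactly one interior root cell lying in $\Omega$ and that the discrete extension $\bar{\mathcal E}$ reproduces polynomials of degree up to $p$ on each aggregate, so that local Bramble--Hilbert estimates on the root cell propagate to the whole aggregate. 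Then, for $1 \le s \le p+1$, I would split
\[
\| u - \bar\pi_{h,act} u \|_{H^s(\Omega)} \le \| u - \bar I_h u \|_{H^s(\Omega)} + \| \bar I_h u - \bar\pi_{h,act} u \|_{H^s(\Omega)}.
\]
Since $\Omega \subset \Omega_{act}$, the first term is controlled directly by the interpolation estimate.

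For the second term both arguments lie in $\bar V_{h,ag}$, so I would apply \eqref{eq:inverse_ineq_agg}, iterated to order $s$ (routine for the piecewise-polynomial functions of the aggregated space) and summed over the aggregates using quasi-uniformity $h_A \sim h$ and the fact that the aggregates partition the active mesh, to obtain
\[
\| \bar I_h u - \bar\pi_{h,act} u \|_{H^s(\Omega)} \lesssim h^{-s} \| \bar I_h u - \bar\pi_{h,act} u \|_{L^2(\Omega)} \le h^{-s} \| \bar I_h u - \bar\pi_{h,act} u \|_{L^2(\Omega_{act})}.
\]
A further triangle inequality together with the $L^2(\Omega_{act})$-optimality of the projector, $\| u - \bar\pi_{h,act} u \|_{L^2(\Omega_{act})} \le \| u - \bar I_h u \|_{L^2(\Omega_{act})}$, bounds the right-hand side by $h^{-s}\| u - \bar I_h u \|_{L^2(\Omega_{act})} \lesssim h^{-s} h^{p+1}|u|_{H^{p+1}(\Omega_{act})}$, which is of the claimed order $h^{p+1-s}$. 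The endpoint $s=0$ follows at once from the $L^2$-optimality and the interpolation estimate with $s=0$.

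I expect the main obstacle to be the inverse inequality at the required order on the cut region: one must control $\| \cdot \|_{H^s(\Omega\cap A)}$ of a discrete function by its $L^2(\Omega\cap A)$ norm \emph{robustly} with respect to the cut location. This is precisely where the stability of the extension \eqref{eq:continuity_extension} enters, since the restriction of an aggregated function to an aggregate is governed by a single polynomial whose full mass is controlled by the mass on the sizeable physical intersection $\Omega\cap A$ guaranteed by the interior root cell; granting the order-one estimate \eqref{eq:inverse_ineq_agg}, its iteration to higher $s$ is elementary. The only bookkeeping subtlety is the deliberate mismatch of domains --- the error is measured on $\Omega$ while optimality and interpolation naturally live on $\Omega_{act}$ --- which is harmless because $\Omega \subset \Omega_{act}$.
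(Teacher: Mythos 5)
Your argument is correct and essentially identical to the paper's proof: the same triangle-inequality splitting against the optimal AgFE interpolant of \cite[Lemma~5.11]{Badia2018aggregated}, the same inverse-inequality reduction of the discrete difference $\bar{\mathcal{I}}_h u - \bar{\pi}_{h,act}u$ from $H^s(\Omega)$ to $h^{-s}$ times its $L^2(\Omega_{act})$ norm, and a closing step that differs only cosmetically---you use a second triangle inequality plus best approximation of the $L^2$ projector, whereas the paper writes $\bar{\pi}_{h,act}(u)-\bar{\mathcal{I}}_h(u)=\bar{\pi}_{h,act}\bigl(u-\bar{\mathcal{I}}_h(u)\bigr)$ and invokes the $L^2(\Omega_{act})$-stability of $\bar{\pi}_{h,act}$, two equivalent one-line facts. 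The only slip is in your side remark (an aggregated function is piecewise polynomial on an aggregate, not a single polynomial), but this plays no role in your chain of estimates, which rests, as in the paper, on the cut-location-robust inverse inequality \eqref{eq:inverse_ineq_agg}.
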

\begin{proof}
  This bound can readily be proved using the existence of an optimal interpolant $\bar{\mathcal{I}}_{h}$ onto $\bar{V}_{h,ag}$ (see \cite[Lemma~5.11]{Badia2018aggregated}), a standard inverse inequality in space, and the stability of the $L^2$ projection as follows:   
  \begin{align}
         \|u - \bar{\pi}_{h,act}(u) \|_{H^s(\Omega)} & \leq 
         \|u -  \bar{\mathcal{I}}_{h}(u) \|_{H^s(\Omega)} + 
         \|  \bar{\pi}_{h,act}(u)-\bar{\mathcal{I}}_{h}(u) \|_{H^s(\Omega)} \label{eq:typical-approx}\\ & \lesssim 
         \|u -  \bar{\mathcal{I}}_{h}(u) \|_{H^s(\Omega)} +
         h^{-s}\| \bar{\pi}_{h,act} (u-\bar{\mathcal{I}}_{h}(u)) \|_{L^2(\Omega_{act})} \nonumber \\ & \lesssim
         \|u - \bar{\mathcal{I}}_{h}(u) \|_{H^s(\Omega)} +
         h^{-s}\|u-\bar{\mathcal{I}}_{h}(u) \|_{L^2(\Omega_{act})} \nonumber \\ & \lesssim
         h^{p+1-s} |u |_{H^{p+1}(\Omega_{act})}. \nonumber
  \end{align}
\end{proof}

  \begin{proposition}[Approximation error in space-time]\label{prop:proj_error_st}
    Let $u \in H^{(p+1,q+1)}(Q^n_{act})$ and \linebreak $V_{h,ag}^n \doteq \bar{V}_{h,ag}^n \otimes \mathcal{P}_{q}(J^n)$, with $\bar{V}_{h,ag}^n$ of order $p$ and $q>0$. The following results hold for $0 \le s \le p+1$:
    \begin{align*}
      & \|u - {\pi}_{h,act}^n(u) \|_{H^{(s,0)}(Q^n)}  \lesssim 
      h^{p+1-s} |u |_{H^{(p+1,0)}(Q^n_{act})}, \\ & 
      \|u - \pi_{\tau,act}^n(u)\|_{L^2(Q^n)}  \lesssim \tau^{q+1}|u|_{H^{(0,q+1)}(Q_{act}^n)},\\ & 
      \|u - \pi_{h\tau,act}^n(u)\|_{H^{(s,0)}(Q^n)}  \lesssim h^{p+1-s} |u |_{H^{(p+1,0)}(Q^n_{act})} + h^{-s}\tau^{q+1}|u|_{H^{(0,q+1)}(Q_{act}^n)}.
    \end{align*}
  \end{proposition}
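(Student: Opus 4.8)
The plan is to prove the three estimates in the stated order, reducing in each case the space-time bound to a purely spatial or purely temporal one via the fibrewise identities of Lemma~\ref{lm:equivalence-projections}, and then integrating over the complementary variable. The recurring device is that, up to the harmless double counting of the $L^2$ contribution, $\|v\|_{H^{(s,0)}(Q^n)}^2 = \int_{J^n}\|v(\cdot,t)\|_{H^s(\Omega(t))}^2\,\mathrm{d}t$, so that spatial estimates at each fixed time slice transfer directly. For the first bound I would invoke the space equivalence \eqref{eq:equiv-space} to write $\pi_{h,act}^n(u)(\cdot,t)=\bar{\pi}_{h,act}^n(u(\cdot,t))$, apply Proposition~\ref{prop:proj_error_time_points} at each fixed $t\in J^n$, and integrate in $t$, using $\int_{J^n}|u(\cdot,t)|^2_{H^{p+1}(\Omega_{act})}\,\mathrm{d}t = |u|^2_{H^{(p+1,0)}(Q^n_{act})}$. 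Only $H^{(p+1,0)}$ regularity enters here.

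For the second bound, the obstacle is that $Q^n$ has time-varying spatial sections $\Omega(t)$, so Fubini cannot be applied to integrate in time at fixed $\boldsymbol{x}$. The key is to pass to the \emph{tensor-product} active domain, using $\|v\|_{L^2(Q^n)}\le\|v\|_{L^2(Q^n_{act})}$ and $Q^n_{act}=\Omega_{act}^n\times J^n$. On this product domain Fubini applies, and by the time equivalence \eqref{eq:equiv-time} the error is $\bar{\pi}_\tau^n$ applied fibrewise in $\boldsymbol{x}$. I would then use the standard one-dimensional estimate $\|w-\bar{\pi}_\tau^n w\|_{L^2(J^n)}\lesssim\tau^{q+1}|w|_{H^{q+1}(J^n)}$ (Bramble--Hilbert on the reference interval plus scaling, valid for the constrained projector since $H^{q+1}(J^n)\hookrightarrow\mathcal{C}^0(J^n)$ makes the endpoint condition meaningful) and integrate in $\boldsymbol{x}$ over $\Omega_{act}^n$ to recover $\tau^{q+1}|u|_{H^{(0,q+1)}(Q^n_{act})}$.

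The third bound is the main one and rests on a commutation identity. I would first show $\pi_{h\tau,act}^n=\pi_{\tau,act}^n\circ\pi_{h,act}^n$ by verifying that the composition satisfies the two defining conditions of $\pi_{h\tau,act}^n$: the endpoint match $\pi_{\tau,act}^n(\pi_{h,act}^n u)(t^n)=\pi_{h,act}^n(u)(t^n)=\bar{\pi}_{h,act}^n(u(\cdot,t^n))$, and $L^2(Q^n_{act})$-orthogonality against $\bar{V}_{h,ag}^n\otimes\mathcal{P}_{q-1}(J^n)$, which follows since this test space lies both in $\bar{V}_{h,ag}^n\otimes L^2(J^n)$ (killing the first factor through $\pi_{h,act}^n$) and in $L^2(\Omega_{act}^n)\otimes\mathcal{P}_{q-1}(J^n)$ (killing the second through $\pi_{\tau,act}^n$); uniqueness then gives the identity. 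Splitting $u-\pi_{h\tau,act}^n u = (u-\pi_{h,act}^n u) + (I-\pi_{\tau,act}^n)\pi_{h,act}^n u$, the first term is controlled by the first bound. For the second term, note it lies in $\bar{V}_{h,ag}^n$ at each time; applying the spatial inverse inequality \eqref{eq:inverse_ineq_agg} (iterated $s$ times, together with the aggregate-level $L^2$ norm equivalence behind \eqref{eq:continuity_extension}) fibrewise and integrating in $t$ gives
\[
\|(I-\pi_{\tau,act}^n)\pi_{h,act}^n u\|_{H^{(s,0)}(Q^n)} \lesssim h^{-s}\,\|(I-\pi_{\tau,act}^n)\pi_{h,act}^n u\|_{L^2(Q^n_{act})}.
\]
The second bound applied to $w=\pi_{h,act}^n u$ controls the right-hand side by $h^{-s}\tau^{q+1}|\pi_{h,act}^n u|_{H^{(0,q+1)}(Q^n_{act})}$, and since $\pi_{h,act}^n$ commutes with $\partial_t^{q+1}$ and is the $L^2(\Omega_{act}^n)$-orthogonal (hence stable) projection, $|\pi_{h,act}^n u|_{H^{(0,q+1)}(Q^n_{act})}\le|u|_{H^{(0,q+1)}(Q^n_{act})}$. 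Summing the two contributions yields the claimed estimate.

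The hardest part is the third bound: the commutation identity $\pi_{h\tau,act}^n=\pi_{\tau,act}^n\circ\pi_{h,act}^n$ is what makes the anisotropic rates decouple cleanly, and the inverse-inequality step must be carried out consistently on the extended product domain $Q^n_{act}$ so that the temporal estimate from the second bound can be reused. The passage to $Q^n_{act}$ is itself the crucial enabling step throughout, since it restores the tensor-product structure that the physical domain $Q^n$ lacks.
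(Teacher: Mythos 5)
Your proof is correct and follows essentially the same route as the paper: reduction to fibrewise spatial/temporal projections via Lemma~\ref{lm:equivalence-projections}, the interpolant-plus-inverse-inequality argument of Proposition~\ref{prop:proj_error_time_points} for the spatial bound, the one-dimensional estimate on $Q^n_{act}$ for the temporal bound, and a triangle-inequality splitting with inverse inequality and $L^2$-stability for the mixed bound. The only (harmless) deviation is in the third estimate, where you commute as $\pi_{\tau,act}^n\circ\pi_{h,act}^n$ and hence need the extra step $|\pi_{h,act}^n u|_{H^{(0,q+1)}(Q^n_{act})}\le |u|_{H^{(0,q+1)}(Q^n_{act})}$, whereas the paper implicitly uses the composition in the opposite order, $\pi_{h,act}^n(u-\pi_{\tau,act}^n u)$, so the temporal estimate applies directly to $u$; your explicit uniqueness verification of the commutation identity is a welcome addition that the paper leaves tacit.
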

\begin{proof}
Since $u$ is continuous in time, using the optimal interpolant $\bar{\mathcal{I}}_{h}$  at each time value and its approximability properties, a standard inverse inequality in space and the stability of $\pi_{h,act}^n$ in $L^2(Q^n_{act})$, we obtain:      
  \begin{align*}
    \int_{J^n} \|u - {\pi}_{h,act}^n(u) \|_{H^s(\Omega)}^2 & \leq 
    \int_{J^n} \|u -  \bar{\mathcal{I}}_{h}(u) \|_{H^s(\Omega)}^2 + 
    \| {\pi}_{h,act}^n(u- \bar{\mathcal{I}}_{h}(u)) \|_{H^s(\Omega)}^2 \\ & \lesssim \int_{J^n}
    \|u -  \bar{\mathcal{I}}_{h}(u) \|_{H^s(\Omega)}^2 +
    h^{-2s}\| {\pi}_{h,act}^n(u-\bar{\mathcal{I}}_{h}(u)) \|_{L^2(\Omega_{act})}^2 \\ & \lesssim
    \int_{J^n} \|u - \bar{\mathcal{I}}_{h}(u) \|_{H^s(\Omega)}^2 +
    h^{-2s}\|u-\bar{\mathcal{I}}_{h}(u) \|^2_{L^2(\Omega_{act})} \\ & \leq
    h^{2(p+1-s)} |u |^2_{H^{(p+1,0)}(Q^n_{act})}.
\end{align*}
The time approximation error can be proved using the equivalence in (\ref{eq:equiv-time}) and the approximation properties of $\bar{\pi}_{\tau}^n$ (see \cite[Th.~12.1]{Thome}):
\begin{align*}
\|u - \pi_{\tau,act}^n(u)\|_{L^2(Q^n)}^2 & \leq \int_{\Omega_{act}} \|u - \bar{\pi}_{\tau}^n(u)\|_{L^2(J^n)}^2 \\ & \lesssim \int_{\Omega_{act}} \tau^{2(q+1)}|u|_{H^{q+1}(J^n)}^2 = \tau^{2(q+1)}|u|_{H^{(0,q+1)}(Q^n_{act})}^2.
\end{align*}
The last result can be obtained combining the two previous results, the continuity of $u$ in time, the stability of $\pi_{h,act}^n$, and an inverse inequality in space:
\begin{align*}
& \|u - \pi_{h\tau,act}^n(u)\|_{H^{(s,0)}(Q^n)} \leq \|u - \pi_{h,act}^n(u)\|_{H^{(s,0)}(Q^n)} + \|\pi_{h,act}^n(u) - \pi_{h\tau,act}^n(u)\|_{H^{(s,0)}(Q^n)} \\ & \lesssim \|u - \pi_{h,act}^n(u)\|_{H^{(s,0)}(Q^n)} + h^{-s} \|\pi_{h,act}^n(u - \pi_{\tau,act}^n(u))\|_{L^{2}(Q^n_{act})} \\ & \lesssim  \|u - \pi_{h,act}^n(u)\|_{H^{(s,0)}(Q^n)} + h^{-s} \|u - \pi_{\tau,act}^n(u)\|_{L^{2}(Q^n_{act})}.
\end{align*}
\end{proof}

\begin{proposition}[Approximation error in $\int_{J^n}\|\cdot\|_{\bar{V}(h)}$.]\label{prop:proj_error_vh}
	If $u \in H^{(p+1,q+1)}(Q^n_{act})$ and ${V}_{h,ag}^n$ has order $p$ in space and $q>0$ in time, then
	\begin{align}\label{eq:bound_vh_norm}
		\int_{J^n} \|u -\pi^n_{h\tau}(u) \|^2_{\bar{V}(h)} \mathrm{~d} t \lesssim  h^{2p}|u|^2_{H^{(p+1,0)}(Q^n_{act})} + h^{-2}\tau^{2(q+1)}|u|_{H^{(0,q+1)}(Q^n_{act})}^2.
	\end{align}
\end{proposition}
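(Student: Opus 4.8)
The plan is to set $w \doteq u - \pi_{h\tau}^n(u)$ and expand $\int_{J^n}\|w\|_{\bar{V}(h)}^2\,\mathrm{d}t$ into the three pieces defining the $\bar{V}(h)$-norm: the gradient term $\mu\int_{J^n}\|\nabla w\|_{L^2(\Omega)}^2\,\mathrm{d}t$, the Nitsche boundary term $\sum_{\bar{T}}\beta_{\bar{T}}\int_{J^n}\|w\|_{L^2(\bar{T}\cap\partial\Omega_D)}^2\,\mathrm{d}t$, and the $h^2$-weighted broken Hessian term $\sum_{\bar{T}}\mu h_{\bar{T}}^2\int_{J^n}|w|_{H^2(\bar{T}\cap\Omega)}^2\,\mathrm{d}t$, bounding each separately. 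After integrating in time, the gradient term is exactly $\mu\,|w|_{H^{(1,0)}(Q^n)}^2$, and, using quasi-uniformity $h_{\bar{T}}\sim h$ together with the fact that the sets $\bar{T}\cap\Omega$ tile $\Omega$, the Hessian term is controlled by $\mu h^2\,|w|_{H^{(2,0)}(Q^n)}^2$ read as a broken (cell-wise) seminorm.

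For these two terms I would invoke Proposition~\ref{prop:proj_error_st} directly. With $s=1$ its third estimate gives $|w|_{H^{(1,0)}(Q^n)}\lesssim h^{p}|u|_{H^{(p+1,0)}(Q^n_{act})}+h^{-1}\tau^{q+1}|u|_{H^{(0,q+1)}(Q^n_{act})}$, whose square is precisely the sought right-hand side. With $s=2$ (admissible since $p\ge 1$ ensures $s\le p+1$) the weight $h^2$ turns the bound $h^{p-1}|u|_{H^{(p+1,0)}}+h^{-2}\tau^{q+1}|u|_{H^{(0,q+1)}}$ into $h^2\,(h^{p-1})^2 = h^{2p}$ and $h^2\,(h^{-2}\tau^{q+1})^2=h^{-2}\tau^{2(q+1)}$, again matching \eqref{eq:bound_vh_norm}. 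For the broken $H^2$-seminorm I would reproduce the argument of Proposition~\ref{prop:proj_error_st} cell-wise: a triangle inequality against the optimal interpolant $\bar{\mathcal{I}}_h$ of \cite[Lemma~5.11]{Badia2018aggregated}, standard interpolation for the smooth part, and the aggregate inverse inequality \eqref{eq:inverse_ineq_agg} (and its second-order analogue) for the finite element remainder.

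The boundary term is the main obstacle, and I would handle it with the aggregate trace inequality \eqref{eq:trace_ineq_agg}. Applied at the aggregate level to $w$ it gives $\|w\|_{L^2(\bar{T}\cap\partial\Omega_D)}^2\lesssim\|w\|_{L^2(A\cap\Omega)}\|w\|_{H^1(A\cap\Omega)}$ for the aggregate $A\supset\bar{T}$; combining with $\beta_{\bar{T}}=\mu\gamma/h_{\bar{T}}\sim\mu/h$, integrating over $J^n$, summing over aggregates and applying Cauchy--Schwarz (against the bounded overlap of aggregates) bounds the boundary term by $\tfrac{\mu}{h}\,\|w\|_{L^2(Q^n)}\,\|w\|_{H^{(1,0)}(Q^n)}$. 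Substituting the $s=0$ bound $\|w\|_{L^2(Q^n)}\lesssim h^{p+1}|u|_{H^{(p+1,0)}}+\tau^{q+1}|u|_{H^{(0,q+1)}}$ and the $s=1$ bound above produces the diagonal contributions $\mu h^{2p}|u|_{H^{(p+1,0)}}^2$ and $\mu h^{-2}\tau^{2(q+1)}|u|_{H^{(0,q+1)}}^2$, exactly the two sought terms; the off-diagonal products scale like $h^{p-1}\tau^{q+1}=\sqrt{h^{2p}\cdot h^{-2}\tau^{2(q+1)}}$ and are absorbed into the diagonal ones by Young's inequality.

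The one subtlety I would flag is that Proposition~\ref{prop:proj_error_st} is stated for the active-domain projector $\pi_{h\tau,act}^n$, whereas here $\pi_{h\tau}^n$ is defined through orthogonality over the physical slab $Q^n$. The estimates still transfer because the inverse-inequality step only needs that the finite element remainder, which lies in the aggregated space $V_{h,ag}^n$, enjoys $L^2(\Omega_{act})$--$L^2(\Omega)$ norm equivalence; this is guaranteed by the continuity of the extension operator \eqref{eq:continuity_extension}, so the $\bar{\mathcal{I}}_h$-based argument applies verbatim to $\pi_{h\tau}^n$. Gathering the three bounds then yields \eqref{eq:bound_vh_norm}.
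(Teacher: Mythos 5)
Your proposal is correct and takes essentially the same approach as the paper: both decompose the $\bar{V}(h)$-norm term by term and reduce everything to the active-domain estimates of Proposition~\ref{prop:proj_error_st} through a spatial inverse inequality plus the $L^2(Q^n)$-stability of the physical-domain projector, your flagged ``subtlety'' being exactly the point the paper handles via the identity $\pi_{h\tau}^n\,\pi_{h\tau,act}^n=\pi_{h\tau,act}^n$, which compares $\pi_{h\tau}^n$ to $\pi_{h\tau,act}^n$ in one triangle-inequality step instead of rerunning the $\bar{\mathcal{I}}_h$-based argument as you do. The Nitsche and broken-Hessian terms that you bound explicitly (trace inequality \eqref{eq:trace_ineq_agg} with Young absorption of the cross term, and the $s=2$ case with the $h^2$ weight) are precisely what the paper compresses into ``the other terms in the $\bar{V}(h)$ norm can readily be obtained using the same ingredients.''
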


\begin{proof}
  Let us consider the bound for $H^1(\Omega)$ term  in $\bar{V}(h)$. We note that $\pi_{h\tau}^n \pi_{h\tau,act}^n =   \pi_{h\tau,act}^n$ by construction (both are projections onto the same discrete space). Using this fact together with the inverse inequality in space and the stability of $\pi_{h\tau}^n$ in $L^2(Q^n)$, we obtain:
  \begin{align*}
    \|u - \pi_{h\tau}^n (u) \|_{H^{(1,0)}(Q^n)}^2 & \leq 
      \|u -  \pi_{h\tau,act}^n (u) \|_{H^{(1,0)}(Q^n)}^2 + \|\pi_{h\tau}^n(u) - \pi_{h\tau,act}^n (u) \|_{H^{(1,0)}(Q^n)}^2 \\ & \lesssim \|u -  \pi_{h\tau,act}^n (u) \|_{H^{(1,0)}(Q^n)}^2 + h^{-2} \| u - \pi_{h\tau,act}^n (u) \|_{L^2(Q^n)}^2. 
  \end{align*}
  The other terms in the $\bar{V}(h)$ norm can readily be obtained using the same ingredients.
\end{proof}

\subsection{Error estimates}\label{sec:error_estimates}
\par Let $u$ be the solution of ~\eqref{eq:heat_eqn} and $u_h$ be the solution of ~\eqref{eq:whole-problem}. We can express the total error as the sum of the approximation error $e_h \doteq \pi_{h\tau}u - u_h$ and the projection error $e_p \doteq (u-\pi_{h\tau}u)$. The total error is defined as 
\begin{equation}\label{eq:error_split}
	e = u - u_h = (u-\pi_{h\tau}u) + (\pi_{h\tau}u - u_h) = e_p + e_h.
\end{equation}

\begin{proposition}[Bounds for approximation error]\label{prop:approx_error_bounds}
  The following inequality holds:
		\begin{align*}
			\tnor{e_h}_{N,*}^2 &\lesssim \|e_{h}^{0,-}\|^2_{L^2(\Omega^{0})} + \int_{0}^{T} \|u-\pi_{h\tau} u\|_{\bar{V}(h)}^2 \mathrm{~d}t 
      + \sum \limits_{n=0}^{N-1}
        \|(I-\bar{\pi}_{h}^n)u(t^n)^{-}\|^2_{L^2(\Omega^{n})}.
		\end{align*}
\end{proposition}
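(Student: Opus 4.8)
The plan is to run the classical energy argument for discontinuous-Galerkin-in-time discretisations, arranged so that the tailored projector $\pi_{h\tau}$ absorbs every time-derivative contribution. First I would test the local stability estimate of Proposition~\ref{prop:stability_st} with $v_h = e_h$ on each slab $J^n$, obtaining
\[
 c_\mu \|e_h\|^2_{L^2(Q^n)} + \tnor{e_h}_n^2 \lesssim B_h^n(e_h,e_h) + \|e_h^{n-1,-}\|^2_{L^2(\Omega^{n-1})}.
\]
Since $e_h = \pi_{h\tau}u - u_h \in V_{h,ag}^n$ and $u_h$ solves the discrete problem, Galerkin orthogonality (Proposition~\ref{prop:orthogonality}) lets me write $B_h^n(e_h,e_h) = B_h^n(\pi_{h\tau}u - u, e_h) = -B_h^n(e_p,e_h)$, using $\pi_{h\tau}u - u = -e_p$. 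The task thus reduces to bounding $B_h^n(e_p,e_h)$ purely in terms of projection errors.

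The crux is the treatment of the time-derivative and inter-slab jump terms of $B_h^n(e_p,e_h)$ (the convective term is absent since $\boldsymbol{w}=\boldsymbol{0}$). Integrating $\int_{Q^n}\partial_t e_p\, e_h$ by parts in time through the space-time divergence identity underlying Proposition~\ref{prop:time_dep_terms_equality}, I would invoke the two defining properties of $\pi_{h\tau}^n$: the orthogonality $\int_{Q^n} e_p\, v_{h\tau} = 0$ for $v_{h\tau}\in\bar{V}_{h,ag}^n\otimes\mathcal{P}_{q-1}(J^n)$ annihilates $\int_{Q^n} e_p\,\partial_t e_h$, and the endpoint condition $\pi_{h\tau}^n u(t^n)=\bar{\pi}_h^n u(t^n)$ makes $e_p^{n,-}=(I-\bar{\pi}_h^n)u(t^n)$ orthogonal in $L^2(\Omega^n)$ to $e_h^{n,-}\in\bar{V}_{h,ag}^n$, so that $\int_{\Omega^n} e_p^{n,-}e_h^{n,-}=0$. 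The time-derivative and jump contributions then collapse to the single cross-slab term $-\int_{\Omega^{n-1}}(I-\bar{\pi}_h^{n-1})u(t^{n-1})\, e_h^{n-1,+}$, plus a lateral space-time boundary contribution $\int_{\partial Q_D^n\cup\partial Q_N^n} n_t e_p e_h$ coming from the moving geometry. This cross-slab term does \emph{not} vanish: because the spatial aggregated spaces differ between slabs, $(I-\bar{\pi}_h^{n-1})u(t^{n-1})$ is orthogonal to $\bar{V}_{h,ag}^{n-1}$ but not to $e_h^{n-1,+}\in\bar{V}_{h,ag}^n$, and it is exactly this space mismatch that, after Cauchy--Schwarz and Young, produces the term $\sum_{n=0}^{N-1}\|(I-\bar{\pi}_h^{n})u(t^{n})^{-}\|^2_{L^2(\Omega^{n})}$.

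For the spatial part $\int_{J^n} a_h(t,e_p,e_h)\,\mathrm{d}t$ I would use the continuity of $a_h$ from Proposition~\ref{prop:coercivity_space} with Cauchy--Schwarz in time, yielding the kept term $\int_{J^n}\|e_p\|^2_{\bar{V}(h)}$ and an $\int_{J^n}\|e_h\|^2_{\bar{V}(h)}$ factor absorbed into $c_\mu\int_{J^n}\|e_h\|^2_{\bar{V}(h)}\subset\tnor{e_h}_n^2$ by Young. The $e_h$ factors of the cross-slab term are split via $\|e_h^{n-1,+}\|\le\|e_h^{n-1,+}-e_h^{n-1,-}\|+\|e_h^{n-1,-}\|$, the jump part being absorbed into $\tnor{e_h}_n^2$ and the endpoint part feeding the carry-over $\|e_h^{n-1,-}\|^2$. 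Finally I would sum over $n=1,\dots,N$: the endpoint norms accumulate into $\tnor{e_h}_{N,*}^2$, the $\bar{V}(h)$ integrals into $\int_0^T\|e_p\|^2_{\bar{V}(h)}$, and the carried-over terms $\|e_h^{n-1,-}\|^2$ are closed by a discrete Gronwall inequality, using the $c_\mu\|e_h\|^2_{L^2(Q^n)}$ contributions to furnish the $O(\tau)$ slack that keeps the accumulation constant bounded uniformly in $N$; the only term with nothing earlier to absorb it is the initial error $\|e_h^{0,-}\|^2_{L^2(\Omega^0)}$.

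I expect the main obstacle to be verifying the exact cancellation in the second step: one must check that the orthogonality and endpoint properties of $\pi_{h\tau}$ eliminate the time-derivative term and the top-endpoint product in full, so that no uncontrollable $\|\partial_t e_p\|$ or $\|e_p\|_{L^2(Q^n)}$ quantity survives, and that the only surviving inter-slab products are precisely those measured by the change-of-space errors $\|(I-\bar{\pi}_h^{n})u(t^{n})\|_{L^2(\Omega^n)}$. A secondary, moving-domain-specific technicality is the lateral boundary contribution $\int_{\partial Q_D^n\cup\partial Q_N^n} n_t e_p e_h$: on the Dirichlet part it must be absorbed using the Nitsche penalty already present in $\|\cdot\|_{\bar{V}(h)}$ (as in the stability proof), while on the Neumann part one relies on the sign condition $n_t\ge 0$ together with trace and inverse estimates. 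Once these cancellations and absorptions are secured, the discrete Gronwall closure is routine.
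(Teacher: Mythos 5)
Your skeleton matches the paper's proof almost step for step: test Prop.~\ref{prop:stability_st} with $v_h=e_h$, use Galerkin orthogonality to reduce everything to $B_h^n(e_p,e_h)$, integrate by parts via Prop.~\ref{prop:time_dep_terms_equality}, annihilate $\int_{Q^n}e_p\,\partial_t e_h$ through \eqref{eq:local_projector_fem} and the top-endpoint product through $\pi_{h\tau}^n(u)(t^n)=\bar{\pi}_h^n(u(t^n))$, and absorb the lateral boundary term using the trace inequality \eqref{eq:trace_ineq_agg} (a small slip here: the sign condition $n_t\ge 0$ on $\partial Q_N$ is of no help for the \emph{cross} term $\int n_t e_p e_h$; the paper simply uses $|n_t|\le 1$ with trace and Young). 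The genuine gap is your treatment of the cross-slab term $(e_p^{n-1,-},e_h^{n-1,+})_{\Omega^{n-1}}$. You correctly observe that $e_p^{n-1,-}=(I-\bar{\pi}_h^{n-1})u(t^{n-1})$ is $L^2(\Omega^{n-1})$-orthogonal to $\bar{V}_{h,ag}^{n-1}$, but you then abandon this fact and split $\|e_h^{n-1,+}\|\le\|\jump{e_h}^{n-1}\|+\|e_h^{n-1,-}\|$, feeding a fixed multiple of $\|e_h^{n-1,-}\|^2$ into the carry-over and closing with a discrete Gronwall argument. The paper uses the orthogonality precisely at this point: since $e_h^{n-1,-}\in\bar{V}_{h,ag}^{n-1}$, one has $(e_p^{n-1,-},e_h^{n-1,+})_{\Omega^{n-1}}=(e_p^{n-1,-},e_h^{n-1,+}-e_h^{n-1,-})_{\Omega^{n-1}}\le\tfrac12\|e_p^{n-1,-}\|^2_{L^2(\Omega^{n-1})}+\tfrac12\|\jump{e_h}^{n-1}\|^2_{L^2(\Omega^{n-1})}$, so the cross term is controlled by the jump alone, no new $\|e_h^{n-1,-}\|^2$ contribution appears, and the sum over slabs closes by pure telescoping of the exact $\tfrac12\|e_h^{n,-}\|^2-\tfrac12\|e_h^{n-1,-}\|^2$ structure from Prop.~\ref{prop:time_dep_terms_equality} (note that the summation must exploit these exact coefficients, not the generic constant hidden in the $\lesssim$ of Prop.~\ref{prop:stability_st}); no Gronwall inequality is needed anywhere.

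Your proposed Gronwall closure, moreover, does not actually deliver the stated bound. The $O(\tau)$ slack you claim from $c_\mu\|e_h\|^2_{L^2(Q^n)}$ is quantitatively insufficient: a time-inverse inequality for the degree-$q$ polynomial on a slab gives only $\|e_h\|^2_{L^2(Q^{n-1})}\gtrsim\tau\,\|e_h^{n-1,-}\|^2_{L^2(\Omega^{n-1})}$, so to absorb a Young term $\delta\|e_h^{n-1,-}\|^2$ into the bulk you need $\delta\lesssim\tau$, which inflates the companion term to $\delta^{-1}\|e_p^{n-1,-}\|^2\sim\tau^{-1}\|e_p^{n-1,-}\|^2$, i.e., a factor $\tau^{-1}$ worse than the sum $\sum_{n}\|(I-\bar{\pi}_h^n)u(t^n)^-\|^2_{L^2(\Omega^n)}$ in the proposition. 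Keeping $\delta$ fixed instead and invoking Gronwall yields an accumulation factor of order $(1+c\delta)^N\sim e^{c\delta T/\tau}$, unbounded as $\tau\to 0$. Either way, the inequality with constants independent of $N$ and $\tau$ is out of reach by your route; the repair is exactly the orthogonality you already noticed but did not use.
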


\begin{proof}
  Let $(\cdot,\cdot)_{\Omega(t)}$ be the inner product on $\Omega(t)$. Using the Galerkin orthogonality (Prop.~\ref{prop:orthogonality}), we have 
\[
	B^n_h(e_h,v_h) = -B^n_h(e_p,v_h), \textrm{ for all } v_h \in V^{n}_{h,ag}.
\]
Integration by parts (see Prop.~\ref{prop:time_dep_terms_equality}) yields
\begin{align*}
	B^n_h( e_p,v_h) &= \int_{J^{n}}^{} (( \partial_t e_p,v_h)_{\Omega(t)} + a(t,e_p,v_h)) \mathrm{~d}t + (e_{p}^{n-1,+} - e_{p}^{n-1,-},v_{h}^{n-1,+})_{\Omega^{n-1}} \\
	&= \int_{J^{n}}^{} (-( e_{p},\partial_t v_{h})_{\Omega(t)} + a(t,e_p,v_h)) \mathrm{~d}t + \int_{\partial Q^n_N \cup \partial Q^n_D}n_te_p v_h \mathrm{~d}S 
	\\ &\quad + (e_{p}^{n,-},v_{h}^{n,-})_{\Omega^n} - (e_{p}^{n-1,-},v_{h}^{n-1,+})_{\Omega^{n-1}}.
\end{align*}
As, $\partial_t v_{h} \in \bar{V}_{h,ag}^n \otimes \mathcal{P}_{q-1}(J^n)$, using \eqref{eq:local_projector_fem}, we have
\[
\int_{J^n}^{} ( e_{p},\partial_t v_{h})_{\Omega(t)}\mathrm{~d}t = 0,
\]
and, since $v_{h}^{n,-} \in \bar{V}_{h,ag}^n$, using the definition of the space-only projector, we get 
\[
(e_{p}^{n,-},v_{h}^{n,-})_{\Omega^{n}} = (u(t^{n})^{-}-\bar{\pi}^n_{h}(u(t^{n}))^{-},v_{h}^{n,-})_{\Omega^{n}} = 0. 
\] 
Therefore,
\begin{align*}
	B^n_h(e_p,v_h) = \int_{J^n}^{} a(t,e_p,v_h)) \mathrm{~d}t + \int_{\partial Q^n_n \cup \partial Q^n_D}n_te_p v_h \mathrm{~d}S - (e_{p}^{n-1,-},v_{h}^{n-1,+})_{\Omega^{n-1}}.
\end{align*}
Setting $v_h = e_h$, and using Prop~\ref{prop:stability_st}, we get
	\begin{align*}
		& \tnor{e_h}_n^2 + c_\mu \|e_h\|^2_{L^2(Q^n)} \lesssim  \| e^{n-1,-}_{h} \|_{L^2(\Omega^{n-1})}^2 + B_h^n(e_h,e_h) \\
		&\lesssim \| e^{n-1,-}_{h} \|_{L^2(\Omega^{n-1})}^2 -\int_{J^n}^{} a(t,e_p,e_h) \mathrm{~d}t - \int_{\partial Q^n_N \cup \partial Q^n_D}n_te_p e_h \mathrm{~d}S + (e_{p}^{n-1,-},e_{h}^{n-1,+})_{\Omega^{n-1}}.
	\end{align*}
	Choosing $\beta_{\bar{T}}>1$, the continuity in Prop.~\ref{prop:coercivity_space} and Young's inequality, we get 
	\begin{align*}
		\tnor{e_h}_n^2 + c_\mu \|e_h\|^2_{L^2(Q^n)} \lesssim & \| e^{n-1,-}_{h} \|_{L^2(\Omega^{n-1})}^2 + \int_{J^n}^{}\|e_p\|^2_{\bar{V}(h)} \mathrm{~d}t +\frac{c_\mu}{4}\int_{J^n}^{} \|e_h\|^2_{\bar{V}(h)} \mathrm{~d}t \\ & - \int_{\partial Q^n_N}n_te_p e_h \mathrm{~d}S+ (e_{p}^{n-1,-},e_{h}^{n-1,+})_{\Omega^{n-1}}.
	\end{align*}
	We can estimate the term on the Neumann boundary using Cauchy-Schwarz and Young's inequalities and \eqref{eq:trace_ineq_agg} as 
	\begin{align*}
		 \left|\int_{\partial Q^n_N}n_te_p e_h \mathrm{~d}S \right| &\lesssim \int_{J^n} \zeta^{-1}\|e_p\|_{L^2(\partial \Omega_N(t))}^2 + \zeta \|e_h\|_{L^2(\partial \Omega_N(t))}^2 \mathrm{~d}t\\
		&\lesssim \int_{J^n}  \zeta^{-1}\|e_p\|_{L^2(\Omega(t))} \|e_p\|_{H^1(\Omega(t))} +  \zeta \|e_h\|_{L^2(\Omega(t))}\|e_h\|_{H^1(\Omega(t))}\mathrm{~d}t\\
		&\lesssim \int_{J^n}\zeta^{-1}\|e_p\|_{H^1(\Omega(t))}^2 \mathrm{~d}t+ \int_{J^n} \zeta  \|e_h\|_{H^1(\Omega(t))}^2\mathrm{~d}t.
	\end{align*}
	Choosing $\zeta$ small enough, we get
	\begin{align*}
		\tnor{e_h}_n^2  &- \frac{c_\mu}{2}\int_{J^n} \|e_h\|^2_{\bar{V}(h)} \mathrm{~d}t \\&\lesssim \| e^{n-1,-}_{h} \|_{L^2(\Omega^{n-1})}^2 + \int_{J^n}\|e_p\|^2_{\bar{V}(h)} \mathrm{~d}t  + (e_{p}^{n-1,-},e_{h}^{n-1,+})_{\Omega^{n-1}}.
	\end{align*}
	The last term on the RHS can be approximated using the fact that $e_{h}^{n-1,-} \in \bar{V}_{h,ag}^{n-1} \text{ defined on } \Omega^{n-1}$:
	\begin{align*}
		(e_{p}^{n-1,-},e_{h}^{n-1,+})_{\Omega^{n-1}} &= (e_{p}^{n-1,-},e_{h}^{n-1,+}- e_{h}^{n-1,-})_{\Omega^{n-1}}\\
		&\le \frac{1}{2}\|e_{p}^{n-1,-}\|^2_{L^2(\Omega^{n-1})} + \frac{1}{2}\|e_{h}^{n-1,+}- e_{h}^{n-1,-}\|^2_{L^2(\Omega^{n-1})}.
	\end{align*}
\end{proof}

\begin{remark}
	We would like to point out that if we use an $H^{(1,0)}(Q^n)$ stable space-time projector $\pi_{h\tau}$ in Prop.~\ref{prop:proj_error_vh}, then we can avoid mixed terms and end up in an estimate of the form 
	\begin{equation*}
		\int_{J^n} \|u -\pi^n_{h\tau}(u) \|^2_{\bar{V}(h)} \mathrm{~d} t \lesssim  h^{2p}|u|^2_{H^{(p+1,0)}(Q^n_{act})} + \tau^{2(q+1)}|u|_{H^{(1,q+1)}(Q^n_{act})}^2.
	\end{equation*}
	However, the $H^{(1,0)}(Q^n)$ stable space-time projector would not satisfy \eqref{eq:local_projector_fem}. Therefore, we cannot eliminate the term involving the time derivate in the proof of Prop.~\ref{prop:approx_error_bounds}.
\end{remark}

\begin{theorem}[Estimates for total error in the \ac{dg} norm]\label{thm:total_error_estimate}
  Let $u_h$ be the solution of \eqref{eq:whole-problem} for $\mathcal{V}_{h,ag}$ of order $p$ in space and $q>0$ in time. Let us assume that the solution $u$ of \eqref{eq:heat_eqn} can be extended to $Q_{act}$ in such a way that $u \in H^{(p+1,q+1)}(Q_{act})$. The following bound for the total error holds:
	\begin{equation}\label{eq:total_err_estimate}
		\begin{aligned}
			\tnor{e}^2_{N,*} \lesssim h^{2p}|u|^2_{H^{(p+1,0)}(Q_{act})} + h^{-2}\tau^{2(q+1)}|u|_{H^{(0,q+1)}(Q_{act})}^2+ h^{2(p+1)} \sup \limits_{0 \le t \le T}|u|_{H^{p+1}(\Omega_{act}(t))}^2.
		\end{aligned}
	\end{equation}
\end{theorem}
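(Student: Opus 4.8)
The plan is to start from the error decomposition $e = e_p + e_h$ introduced in \eqref{eq:error_split}, with projection error $e_p = u - \pi_{h\tau}u$ and approximation error $e_h = \pi_{h\tau}u - u_h$, and apply the triangle inequality for the accumulated norm, $\tnor{e}_{N,*} \le \tnor{e_p}_{N,*} + \tnor{e_h}_{N,*}$. It then suffices to bound each piece separately using the already-established approximation and stability results, and to collect the three types of contributions (the $h^{2p}$ spatial term, the $h^{-2}\tau^{2(q+1)}$ temporal term, and the $h^{2(p+1)}$ time-point term).

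For the approximation error I would invoke Proposition~\ref{prop:approx_error_bounds} directly and estimate each of its three contributions. The initial term $\|e_h^{0,-}\|_{L^2(\Omega^0)}$ is controlled by (in fact reduces to) the initial projection error, since the discrete initial datum in \eqref{eq:whole-problem} is the $L^2$ projection $\bar{\pi}_h^0 u^0$, which agrees with $\pi_{h\tau}u$ at $t^0$. The integral term $\int_0^T \|u-\pi_{h\tau}u\|_{\bar{V}(h)}^2\,\mathrm{~d}t$ is bounded by summing the slab-wise estimate of Proposition~\ref{prop:proj_error_vh} over $n = 1,\dots,N$, which produces exactly the first two terms $h^{2p}|u|^2_{H^{(p+1,0)}(Q_{act})} + h^{-2}\tau^{2(q+1)}|u|^2_{H^{(0,q+1)}(Q_{act})}$ of \eqref{eq:total_err_estimate}. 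Finally, the time-point sum $\sum_{n=0}^{N-1}\|(I-\bar{\pi}_h^n)u(t^n)^-\|_{L^2(\Omega^n)}^2$ is estimated by Proposition~\ref{prop:proj_error_time_points} with $s=0$, giving $h^{2(p+1)}|u(t^n)|_{H^{p+1}(\Omega_{act}^n)}^2$ on each summand, which I control by the supremum over $t$ to recover the third term.

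For the projection error I would bound the three constituents of $\tnor{e_p}_{N,*}^2$ in \eqref{eq:norm_st} termwise. The final-time value satisfies $\|e_p^{N,-}\|_{L^2(\Omega^N)} = \|(I-\bar{\pi}_h^N)u(t^N)\|_{L^2(\Omega^N)}$; using the continuity of $u$ in time together with the design property $\pi_{h\tau}^n(u)(t^n) = \bar{\pi}_h^n(u(t^n))$, each inter-slab jump $\|e_p^{i,+}-e_p^{i,-}\|_{L^2(\Omega^i)}$ reduces to differences of spatial projections at the nodal times, so both are estimated by Proposition~\ref{prop:proj_error_time_points} with $s=0$ and absorbed into the supremum-in-time term; the remaining integral $\int_0^{t^N}\|e_p\|_{\bar{V}(h)}^2$ is handled once more by Proposition~\ref{prop:proj_error_vh}. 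Combining the bounds for $\tnor{e_p}_{N,*}$ and $\tnor{e_h}_{N,*}$ through the triangle inequality then yields \eqref{eq:total_err_estimate}.

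The main obstacle I anticipate lies in the accumulation over time slabs. The jump and time-point contributions arise once per slab, so a naive termwise bound of $\sum_{n=0}^{N-1}$ would carry a factor growing with $N$; obtaining the clean $\sup_t$ dependence stated in \eqref{eq:total_err_estimate} requires controlling this accumulation carefully, either through a discrete Gronwall argument applied to the slab-wise recursion behind Proposition~\ref{prop:approx_error_bounds} or by absorbing the point contributions rather than summing them crudely. In the same vein, the mixed terms $(e_p^{n-1,-}, e_h^{n-1,+})_{\Omega^{n-1}}$ appearing in that proof must be split and reabsorbed into the left-hand side using the local stability estimate of Proposition~\ref{prop:stability_st}, and bounding the inter-slab jump of $e_p$ relies on a trace-in-time estimate for $\pi_{h\tau}^n$; ensuring these steps respect the anisotropic scaling is where I expect the delicate bookkeeping to concentrate.
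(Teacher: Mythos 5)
Your outline coincides with the paper's proof: the same splitting $e=e_p+e_h$ from \eqref{eq:error_split}, the triangle inequality in $\tnor{\cdot}_{N,*}$, Proposition~\ref{prop:approx_error_bounds} for $e_h$ with the choice $u_h^0=\bar{\pi}_h^0(u^0)$ annihilating the initial term, and Propositions~\ref{prop:proj_error_time_points} and \ref{prop:proj_error_vh} to produce the three terms of \eqref{eq:total_err_estimate}. However, one step you assert does not hold as written: the claim that each inter-slab jump of $e_p$ ``reduces to differences of spatial projections at the nodal times.'' The design property $\pi^n_{h\tau}(u)(t^n)=\bar{\pi}_h^n(u(t^n))$ pins the projector only at the \emph{right} endpoint of each slab, so while $e_p^{i,-}=(I-\bar{\pi}_h^i)u(t^i)$ is indeed covered by Proposition~\ref{prop:proj_error_time_points} with $s=0$, the trace $e_p^{i,+}=(u-\pi^{i+1}_{h\tau}u)(t^{i,+})$ at the \emph{left} endpoint of slab $J^{i+1}$ is not a nodal spatial projection error, and Proposition~\ref{prop:proj_error_time_points} alone cannot bound it. Controlling it requires an inverse/trace-in-time estimate for polynomials in time (of the type $\|w(t^{i,+})\|_{L^2(\Omega)}^2\lesssim (\tau^{i+1})^{-1}\|w\|_{L^2(Q^{i+1})}^2$) combined with Proposition~\ref{prop:proj_error_st} --- precisely the ingredient you flag at the end of your proposal, in contradiction with the reduction claimed in your main text. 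To be fair, the paper's own proof is equally terse on this point (it disposes of $\tnor{e_p}_{N,*}$ by simply citing Propositions~\ref{prop:proj_error_time_points} and \ref{prop:proj_error_vh}), so you have located a step the paper also glosses over; but a complete argument must supply it rather than assert the false reduction.

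Your second anticipated obstacle, by contrast, is largely a non-issue and reveals a misreading of the structure: no discrete Gronwall argument is needed, because Proposition~\ref{prop:approx_error_bounds} is already stated in the accumulated norm $\tnor{\cdot}_{N,*}$ and its slab recursion telescopes \emph{exactly}. The energy identity of Proposition~\ref{prop:time_dep_terms_equality} yields coefficients $1/2$ on $\|e_h^{n,-}\|^2$ and $\|e_h^{n-1,-}\|^2$, and the mixed term $(e_p^{n-1,-},e_h^{n-1,+})_{\Omega^{n-1}}$ is handled by the $L^2(\Omega^{n-1})$-orthogonality $(e_p^{n-1,-},e_h^{n-1,-})_{\Omega^{n-1}}=0$ plus Young's inequality, with the resulting jump absorbed into the left-hand side --- so summation over slabs produces no exponentially growing constant. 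What survives of your accumulation worry is a genuine but modest bookkeeping point: bounding $\sum_{n=0}^{N-1}\|e_p^{n,-}\|^2_{L^2(\Omega^n)}$ termwise by Proposition~\ref{prop:proj_error_time_points} gives $N\,h^{2(p+1)}\sup_{t}|u|^2_{H^{p+1}(\Omega_{act}(t))}$ with $N\sim T/\tau$, a factor the theorem's statement does not display. Under the natural scaling $\tau\sim h$ this contribution is $O(h^{2p+1})$ and remains dominated by the leading $h^{2p}$ term, so the rate is unaffected; the paper silently absorbs this count, and the correct remedy is either to accept the $\tau^{-1}$ factor (harmless unless $\tau\ll h^2$) or to convert the sum of nodal values into integrals via a trace-in-time inequality, not Gronwall.
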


\begin{proof}
	The total error $e$ can be expressed as sum of approximation and projection errors, that is, $e = e_p + e_h$. So,
	\begin{align*}
		\tnor{e}^2_{N,*} \le \tnor{e_p}^2_{N,*} + \tnor{e_h}^2_{N,*}.
	\end{align*}
	From Prop~\ref{prop:approx_error_bounds}, we have 
	\begin{align*}
		\tnor{e_h}_{N,*}^2 \lesssim \|e_{h}^{0,-}\|^2_{L^2(\Omega^{0})}&+\int_{0}^{T} \|e_p\|_{\bar{V}(h)}^2 \mathrm{d}t + \sum_{n=0}^{N-1}  \|e_{p}^{n,-}\|^2_{L^2(\Omega^{n})}.
		\end{align*}
		Setting $u_{h}^0 = \bar{\pi}^0_{h}(u^0)$, we get $e_{h}^{0,-} = u_{h}^{0,-} -  \bar{\pi}^0_{h}(u^0)^{-} = 0$. Using Prop.~\ref{prop:proj_error_time_points} and \ref{prop:proj_error_vh}, we get
		\begin{align*}
			\tnor{e_h}_{N,*}^2 \lesssim h^{2p}|u|^2_{H^{(p+1,0)}(Q_{act})} + h^{-2}\tau^{2(q+1)}|u|_{H^{(0,q+1)}(Q_{act})}^2+ h^{2(p+1)} \sup \limits_{0 \le t \le T}|u|_{H^{p+1}(\Omega_{act}(t))}^2.
		\end{align*}
		This result together with Propositions~\ref{prop:proj_error_time_points} and \ref{prop:proj_error_vh} proves the desired estimate.

 \end{proof}

\begin{remark}
      Let us compare the a priori error bounds above with similar analyses in the literature. The analysis in \cite{Lehrenfeld2013} for an \ac{xfem}-\ac{dg} unfitted method makes use of essentially the same norm as above for the discretisation error but a stronger norm is required for the approximation error. Besides, the a priori error bounds in \cite{preuss_ma,heimann_ma} for a face-based ghost penalty unfitted discretisation are obtained for a stronger upwind-like norm that also provides control on the time derivative of the solution. 
\end{remark}

\subsection{Solving the discrete system}\label{sec:cond_num_analysis}
\par 

In this section, we analyse the conditioning of the system matrix to be inverted at every time slab. The main motivation behind the proposed formulation is the ability to yield matrices that are well-posed independently of the cut location. Some previous analyses on the robustness of \ac{agfe} methods are obtained for self-adjoint operators \cite{Badia2018aggregated}. This is not the case for the time-dependent problems considered in this work. The resulting linear system matrix is nonsymmetric. However, one can consider a preconditioning technique to end up with a preconditioned symmetric linear system. We refer the reader to \cite{Smears2016} for a preconditioning technique for \ac{dg} time discretisations and fixed spatial domains. We are not aware of preconditioning techniques for time-varying domains. It is not the aim of this section to design effective preconditioner for this more general case. Instead, following \cite{Smears2016}, we prove that such preconditioner is effective in the time-constant domain for the unfitted formulation proposed in this work. 

The effectiveness of the preconditioner in \cite{Smears2016} relies on the condition number of the following mass and stiffness matrices: 
\begin{equation}\label{eq:system_matrix_st}
	\boldsymbol{M}_{ab} = \int_{Q^n} \mathcal{E}^n(\Phi^a) \mathcal{E}^n(\Phi^b) \mathrm{~d}\boldsymbol{x}\mathrm{~d}t, \qquad
  \boldsymbol{A}_{ab} = \int_{Q^n} a_h(t,\mathcal{E}^n(\Phi^a),\mathcal{E}^n(\Phi^b)) \mathrm{d}\boldsymbol{x}\mathrm{~d}t, 
\end{equation}
$\text{ for } a,b \in \mathcal{N}_{h,in}^n$, the set of internal nodes in $J^n$.
We prove below that the condition numbers of these two matrices do not depend on the cut location.

\begin{proposition}[Space-time mass and stiffness matrix condition number]\label{prop:system_cond_num}
	The condition number of the system matrices $\boldsymbol{M}$ and $\boldsymbol{A}$ in \eqref{eq:system_matrix_st} associated with the \ac{agfe} space $V^n_{h,ag}$ are bounded by $C$ and  $Ch^{-2}$, resp., for some positive constant $C$. 
\end{proposition}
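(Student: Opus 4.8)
The plan is to exploit the tensor-product structure of both the space-time \ac{agfe} space and the extension operator, reducing the estimate to the known spatial condition-number bounds of \cite{Badia2018aggregated,Badia2022robust} combined with a one-dimensional temporal factor. I work with the coefficient vector $\boldsymbol{U} = (U_a)_{a \in \mathcal{N}^n_{h,in}}$ and the associated interior function $u_h = \sum_a U_a \Phi^a \in V^n_{h,in}$, writing $\Phi^a = \phi^{a_{\boldsymbol{x}}}\varphi^{a_t}$. Since $\mathcal{E}^n$ acts only on the spatial variable, $\mathcal{E}^n(\Phi^a)(\boldsymbol{x},t) = \bar{\mathcal{E}}^n(\phi^{a_{\boldsymbol{x}}})(\boldsymbol{x})\varphi^{a_t}(t)$, whence $\boldsymbol{U}^\top\boldsymbol{M}\boldsymbol{U} = \|\mathcal{E}^n(u_h)\|^2_{L^2(Q^n)}$ and $\boldsymbol{U}^\top\boldsymbol{A}\boldsymbol{U} = \int_{J^n} a_h(t,\mathcal{E}^n(u_h),\mathcal{E}^n(u_h))\,\mathrm{d}t$. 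Because $\boldsymbol{w}=\boldsymbol{0}$ and Nitsche's terms are symmetric, both matrices are symmetric positive definite, so their condition numbers are ratios of extreme eigenvalues, and it suffices to bound these two Rayleigh quotients above and below by $\boldsymbol{U}^\top\boldsymbol{U}$ with the claimed $h$- and $\tau$-dependence.

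First I would freeze $t \in J^n$ and introduce the time-slice coefficients $W_{a_{\boldsymbol{x}}}(t) = \sum_{a_t} U_{(a_{\boldsymbol{x}},a_t)}\varphi^{a_t}(t)$, so that $u_h(\cdot,t) = \sum_{a_{\boldsymbol{x}}} W_{a_{\boldsymbol{x}}}(t)\phi^{a_{\boldsymbol{x}}}$. The anisotropic aggregation criterion $\eta^n$ (the minimum over $J^n$) guarantees that every root cell has measure bounded below by $\eta_0 h^d$ and lies inside $\Omega(t)$ for \emph{all} $t \in J^n$; consequently the spatial mass and stiffness estimates hold uniformly in $t$, with constants independent of the cut location. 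Using the continuity \eqref{eq:continuity_extension} of $\bar{\mathcal{E}}^n$ for the upper bound and the restriction of $\mathcal{E}^n(u_h(\cdot,t))$ to root cells (where it coincides with $u_h(\cdot,t)$) for the lower bound, I obtain
\begin{equation*}
  \|\mathcal{E}^n(u_h(\cdot,t))\|^2_{L^2(\Omega(t))} \;\asymp\; h^d \sum_{a_{\boldsymbol{x}}} |W_{a_{\boldsymbol{x}}}(t)|^2,
\end{equation*}
while coercivity and continuity of $a_h$ (Proposition~\ref{prop:coercivity_space}) together with the aggregate inverse inequality \eqref{eq:inverse_ineq_agg} give
\begin{equation*}
  h^d \sum_{a_{\boldsymbol{x}}} |W_{a_{\boldsymbol{x}}}(t)|^2 \;\lesssim\; a_h(t,\mathcal{E}^n(u_h(\cdot,t)),\mathcal{E}^n(u_h(\cdot,t))) \;\lesssim\; h^{d-2} \sum_{a_{\boldsymbol{x}}} |W_{a_{\boldsymbol{x}}}(t)|^2 .
\end{equation*}

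Next I would integrate these pointwise bounds over $J^n$. Since $\int_{J^n}\sum_{a_{\boldsymbol{x}}}|W_{a_{\boldsymbol{x}}}(t)|^2\,\mathrm{d}t = \sum_{a_{\boldsymbol{x}}}\boldsymbol{U}_{a_{\boldsymbol{x}}}^\top \bar{M}_\tau \boldsymbol{U}_{a_{\boldsymbol{x}}}$, where $\bar{M}_\tau$ is the $(q+1)\times(q+1)$ temporal mass matrix on $J^n$ and $\bar{M}_\tau = \tau^n \hat{M}_\tau$ with $\hat{M}_\tau$ the reference mass matrix on the unit interval (whose eigenvalues depend only on $q$), I get $\int_{J^n}\sum_{a_{\boldsymbol{x}}}|W_{a_{\boldsymbol{x}}}(t)|^2\,\mathrm{d}t \asymp \tau^n \boldsymbol{U}^\top\boldsymbol{U}$. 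Combining with the slice bounds yields $\boldsymbol{U}^\top\boldsymbol{M}\boldsymbol{U} \asymp h^d\tau^n\,\boldsymbol{U}^\top\boldsymbol{U}$ and $h^d\tau^n\,\boldsymbol{U}^\top\boldsymbol{U} \lesssim \boldsymbol{U}^\top\boldsymbol{A}\boldsymbol{U} \lesssim h^{d-2}\tau^n\,\boldsymbol{U}^\top\boldsymbol{U}$; the common factor $h^d\tau^n$ cancels in the eigenvalue ratios, giving $\kappa(\boldsymbol{M}) \le C$ and $\kappa(\boldsymbol{A}) \le C h^{-2}$.

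The main obstacle is that $Q^n$ is \emph{not} the Cartesian product $\Omega \times J^n$, since $\Omega(t)$ evolves, so the matrices do not factor exactly as a Kronecker product of a spatial and a temporal matrix. The remedy, and the crux of the argument, is to carry out the spatial estimates slice-by-slice at fixed $t$ and only afterwards integrate in time; this is legitimate precisely because the anisotropic criterion $\eta^n$ renders the spatial \ac{agfe} condition-number constants uniform over the slab. A secondary point requiring care is that the smallest eigenvalue of $\boldsymbol{A}$ must not degenerate on near-constant modes: this is exactly controlled by the $L^2$ term in the coercivity estimate of Proposition~\ref{prop:coercivity_space}, which prevents the gradient-based form from vanishing.
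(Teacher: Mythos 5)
Your proposal is correct and follows essentially the same route as the paper's proof: exploit the tensor-product structure $\mathcal{E}^n = \bar{\mathcal{E}}^n \otimes \mathrm{id}$ to reduce the Rayleigh quotients to the slice-wise spatial spectral bounds of \cite{Badia2018aggregated} (continuity of the extension and the mass bound for $\boldsymbol{M}$; coercivity with the $L^2$ term, plus the inverse estimate, for $\boldsymbol{A}$), valid uniformly in $t\in J^n$ thanks to the anisotropic well-posedness criterion, and then integrate over the slab so the common factor $\tau^n h^d$ cancels in the eigenvalue ratios. Your write-up merely makes explicit some steps the paper leaves implicit (the temporal reference mass matrix scaling $\bar{M}_\tau = \tau^n\hat{M}_\tau$ and the symmetry/positive-definiteness justifying the eigenvalue-ratio characterisation), which is a faithful elaboration rather than a different argument.
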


\begin{proof}
Let $\boldsymbol{u}$  denote the nodal vectors of $u_{h}$, which is constant in time. 
Combining the tensor product expression in \eqref{eq:extension_tensor_product} for the space-time extension operator and the continuity of the discrete extension operator for all time values in $J^n$(see, \cite[Lemma~5.2.]{Badia2018aggregated}), we get
	\begin{align}\label{eq:bounds_extension_st}
		 { \tau^n h^d \|\boldsymbol{u}\|^2_2 \lesssim \|\mathcal{E}^n(u_h)\|^2_{L^2(Q^n)} \lesssim \tau^n h^d  \|\boldsymbol{u}\|^2_2}, \quad \text{ for } u_h \in V^n_{h,in}.
	\end{align}
Using the coercivity of the spatial bilinear form for all $t \in (0,T]$ in Prop.~\ref{prop:coercivity_space}, \eqref{eq:extension_tensor_product}, \cite[Lemma~5.8.]{Badia2018aggregated} and \eqref{eq:bounds_extension_st}, we have
	\begin{equation}\label{eq:stiff_lower_bound}
		\int_{J^n} a_h(t,\mathcal{E}^n(u_h),\mathcal{E}^n(u_h))\mathrm{~d}t \gtrsim \int_{J^n} \left(\|\mathcal{E}^n(u_h)\|^2_{\bar{V}_h} + \|\mathcal{E}^n(u_h)\|_{L^2(\Omega(t))}^2\right) \mathrm{~d}t \gtrsim {\tau^n h^d \|\boldsymbol{u}\|^2_2}.
	\end{equation}
Using \eqref{eq:extension_tensor_product} and \cite[Cor.~5.9.]{Badia2018aggregated}, we get
	\begin{equation}\label{eq:stiff_upper_bound}
		\int_{J^n}a_h(t,\mathcal{E}^n(u_h),\mathcal{E}^n(u_h))\mathrm{~d} t \lesssim {\tau^n h^{d-2} \|\boldsymbol{u}\|^2_2}.
	\end{equation}
It proves the proposition.
\end{proof}

\section{Numerical experiments}\label{sec:num_expts}

\subsection{Implementation remarks}\label{sec:implementation_remarks}
 
The numerical examples below have been computed using the Julia programming language \cite{Bezanson2017} (version 1.7.1) and several components of the \texttt{Gridap} project \cite{Badia2020gridap} (version 0.17.12), which is a free and open-source \ac{fem} framework written in Julia. \texttt{Gridap} combines a high-level user interface to define the weak form in a syntax close to the mathematical notation and a computational backend based on the Julia JIT-compiler, which generates high-performant code tailored for the user input \cite{Verdugo2022}. We have taken advantage of the extensible and modular nature of \texttt{Gridap} to implement the new methods in this paper. In particular, we have heavily used \texttt{GridapEmbedded}  \cite{GridapEmbedded-jl} (version 0.8.0), a plug-in that provides  functionality to implement different types of embedded \ac{fe} methods, including the generation of embedded integration meshes from level-set functions and different types of stabilisation schemes based on ghost-penalty and \ac{agfem}. 

The computational code developed to run the examples consists of a temporal loop over all time slabs. At each time slab, the discrete weak problem \eqref{eq:weak_form_st} is solved. The assembly of the linear system associated with equation \eqref{eq:weak_form_st} and its solution can be performed with the high-level tools provided by \texttt{Gridap} and \texttt{GridapEmbedded}  plus minor non-intrusive extensions. The generation of the background space-time mesh $\mathcal{T}^n_{h,art}$ for a given time slab $J^n$ and its intersection with the space-time level-set function describing $Q$ can be readily done using the available functionality in \texttt{Gridap} and \texttt{GridapEmbedded} , but applied to $d+1$ dimensions. This is also true for the space-time interpolation space $V^n_{h,ag}$ and the numerical integration of quantities in the space-time integration domains $Q^n\cap T$ for $T\in \mathcal{T}^n_{h,act}$. In fact, these operations can be done with any \ac{fem} code that supports \ac{agfem}-based embedded computations on $d+1$ spatial dimensions. With \texttt{Gridap} and \texttt{GridapEmbedded}, one could even consider  $d=3$ (thus involving  space-time domains in $d+1=4$ dimensions) since most of the code is implemented for an arbitrary value of $d$. In any case, we have restricted the following numerical experiments to $d=2$ due to computational power constraints. In the future, we plan to combine the implementation with \texttt{GridapDistributed} \cite{alberto_f_martin_2022_6076710} to exploit parallel environments.

The main extensions we need to solve the weak form \eqref{eq:weak_form_st} are the following. On one hand, the time derivative $\partial_t$ and the \emph{space-only} gradient operator $\nabla$ are not available in \texttt{Gridap} for functions in $d+1$ dimensions since they are specific of space-time methods. However, the implementation of these operators is just a post-process of the full gradient operator provided by \texttt{Gridap} (i.e., the standard gradient containing all partial derivatives w.r.t. the $d+1$ coordinates). The most intricate extension is related with the numerical integration of the space-time shape functions $v_h\in V^n_{h,ag}$ in the spatial domain $\Omega(t^{n-1})$. To this end, we have implemented $f(t)$,  the restriction of a given space-time function $f$
to a given time instant $t$, returning a space-only function. To implement this operation, we assume for simplicity and without any loss of generality that $f$ is defined on the reference cell of the space-time mesh $\mathcal{T}^n_{h,art}$, since this is usually the case for the shape functions in $V^n_{h,ag}$. Function $f(t)$ can be conveniently introduced in the code as the function composition $f(t)\doteq f\circ \phi^n_t$, where $\phi^n_t$ is a map that goes from the reference cell of the space only mesh $\bar{\mathcal{T}}^n_{h,art}$ to the reference cell of the space-time mesh $\mathcal{T}^n_{h,art}$. For $d=2$ and $d=3$, the map $\phi^n_t$ is defined for a linear approximation of the geometry in time as
$$\phi^n_t(x,y) \doteq (x,y,\frac{t-t^{n-1}}{t^n-t^{n-1}})^\mathrm{T} \text{ and } \phi^n_t(x,y,z) \doteq (x,y,z,\frac{t-t^{n-1}}{t^n-t^{n-1}})^\mathrm{T},$$
respectively. In previous formulas, we have transformed the time $t$ in the ``physical'' domain $J^n=(t^{n-1},t^n)$ to a time value in the reference domain $(0,1)$. The definition of $\phi^n_t$ is analogous for other values of $d$. The implementation of $\phi^n_t$ is straightforward and the function composition $f\circ \phi^n_t$ can be readily computed with high-level tools available in \texttt{Gridap}. Using this functionality, one can easily compute  $v_h(t^{n-1})$ for the shape functions in $V^n_{h,ag}$ and then use the resulting space-only functions to compute the integrals on $\Omega(t^{n-1})$.

\subsection{Methods and parameter space}\label{sec:methods}

\par The numerical experiments in this section are designed to solve a heat equation with non-homogeneous boundary conditions that are weakly enforced using Nitsche's method. The Nitsche's coefficient is $\gamma = 10p(p+1)$, where $p$ is the order of spatial discretisation. The initial condition and source term are calculated such that the  manufactured solution \cite{Sudirham2006}
\begin{equation}\label{eq:man_sol}
	u(\boldsymbol{x},t) = \sin\left(\frac{\pi x}{L_{x}}\right)\sin\left(\frac{\pi y}{L_{y}}
	\right)\exp\left(\frac{-2\mu\pi^2 t}{T}\right)
\end{equation}
is an exact solution of \eqref{eq:heat_eqn}. Here, $\boldsymbol{x} =(x,y)$ is the 2-dimensional spatial variable. The lengths of the bounding box in the spatial dimensions are $L_{x}$ and $L_{y}$ respectively. The spatial background Cartesian mesh, $\Omega_{art} = [0,L_{x}] \times [0,L_{y}] = [0,2]\times[0,1]$. The final time is $T = 1$ and the diffusion coefficient is $\mu = 1$. We consider a moving geometry with a circular hole for the condition number tests and a moving geometry with a square hole for the convergence tests. The moving geometry with a circular hole is defined as
\begin{equation}\label{eq:moving_disk}
	Q_c =  \Omega_{art}\times[0,T] \setminus \{(\boldsymbol{x},t) : (x-1.5L_{x}+0.5L_{x}t)^2 
	+ (y-0.5L_{y})^2 \le 0.2^2\},
\end{equation}
and the moving geometry with a square hole is defined as
\begin{align}\label{eq:moving_square}
	Q_s =  \Omega_{art}\times[0,T] \setminus \{(\boldsymbol{x},t) : 1.5L_{x}+0.5L_{x}t-0.2
	&\le x \le 1.5L_{x}+0.5L_{x}t+0.2,\nonumber \\
	0.5L_{y}-0.2 &\le y \le 0.5L_{y}+0.2 \}.
\end{align}

\par We use Lagrangian reference \acp{fe} with bi-linear and bi-quadratic continuous polynomials in space and linear and quadratic discontinuous polynomials in time. We have not explored higher-order functional and geometrical discretisations in this work. We refer to \cite{Badia2022robust} for a recent extension of \ac{agfem} methods to higher-order discretisations.

\par The \texttt{cond()} function in Julia is used to evaluate the condition numbers. The condition numbers are computed in the 1-norm for efficiency reasons. 
The numerical experiments have been carried out at the MonARCH cluster at Monash University. 

\subsection{Condition number tests}\label{sec:condition_num_test}

\par In the first experiment, we move the centre of the disk along the $x$-axis, and calculate the condition numbers of the mass and stiffness matrices. We consider a spatial background mesh of size $h = 2^{-5}$ and a single time slab of size $\tau = 10^{-3}$. The position of the centre is perturbed as $(1.5L_{x}-0.5L_{x}t - \ell,0.5L_{y})$, where $0 \le \ell \le 1$.  We use different values for $\ell$ and calculate the condition numbers using \ac{sfem} and \ac{agfem} for linear and quadratic polynomials in space and time. We consider a polyhedral approximation of the disk and impose the exact boundary conditions on the approximated domain. As a result, we are not incurring in integration error.

\begin{figure}
	\centering
	\begin{subfigure}[b]{0.45\textwidth}
		\centering
		\includegraphics[width=\textwidth]{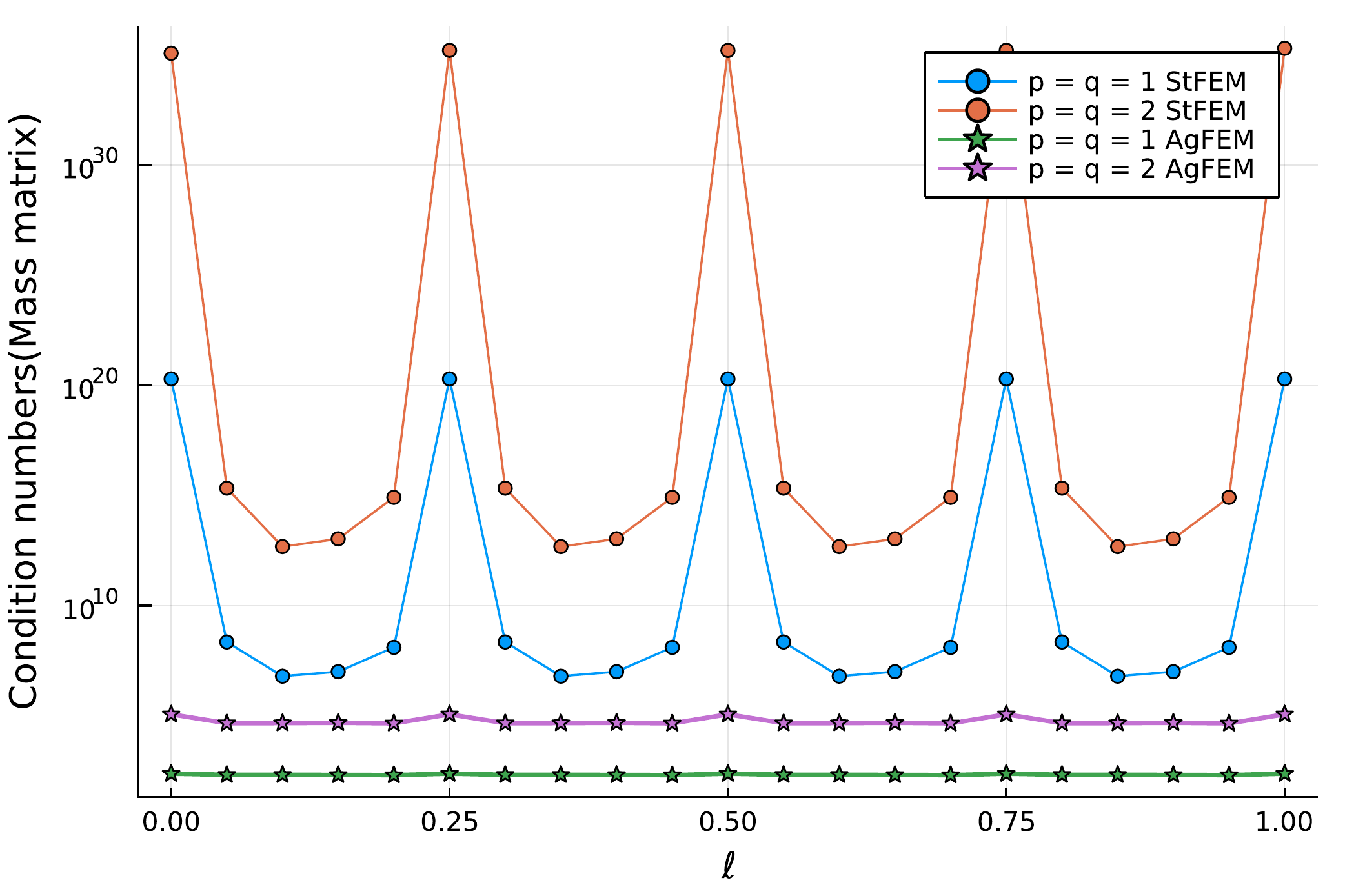}
		\caption{Mass matrix}
		\label{fig:robust_mass}
	\end{subfigure}
	\vspace{0.2cm}
	\begin{subfigure}[b]{0.45\textwidth}
		\centering
		\includegraphics[width=\textwidth]{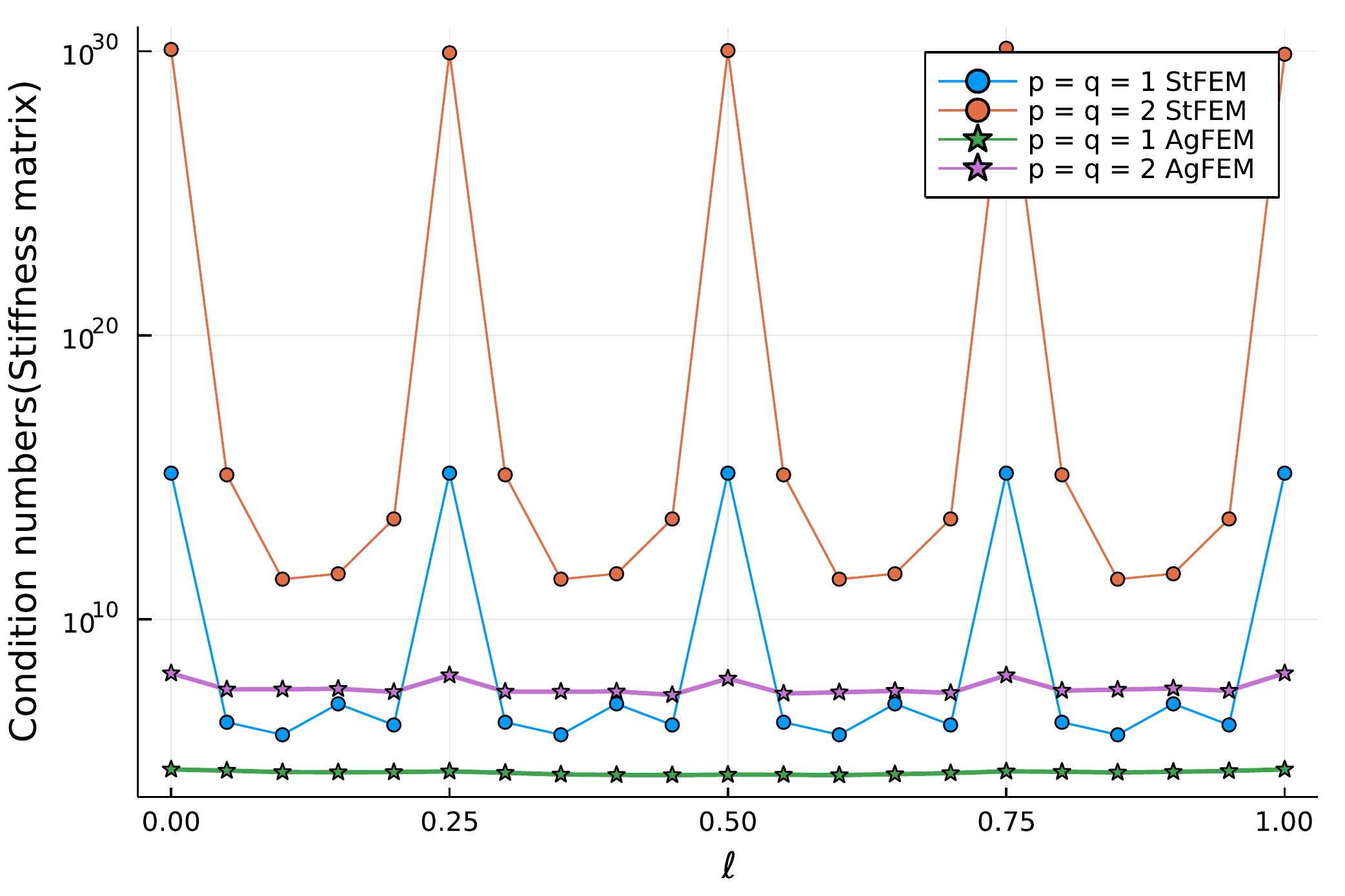}
		\caption{Stiffness matrix}
		\label{fig:robust_stiff}
	\end{subfigure}
	   \caption{The figures depict the plots of condition numbers of the mass and stiffness matrices respectively against perturbation of the centre of the disk 
	   	using \ac{agfem} and \ac{sfem} for linear and quadratic polynomials in space and time.}
	   \label{fig:robust_plots}
\end{figure}

The plot of condition numbers of the mass and stiffness matrices against the perturbation of the centre of the disk ($\ell$) is illustrated in Figure ~\ref{fig:robust_plots}. We observe that the condition numbers using \ac{agfem} are not affected by moving the position of the disk, i.e., it is robust with respect to the cut location, whereas there are huge fluctuations using \ac{sfem}. As the position of the disk changes, the cut locations change and some configurations of the geometry result in higher condition numbers using \ac{sfem} due to the small cut cell problem. The problem is more severe for quadratic \ac{sfem} in space-time, leading to almost singular matrices in some cases.  On the other hand, the position of the geometry plays a negligible role in determining the condition numbers of the mass and stiffness matrices in the proposed space-time \ac{agfem}.

\begin{figure}
	\centering
	\begin{subfigure}[b]{0.45\textwidth}
		\centering
		\includegraphics[width=\textwidth]{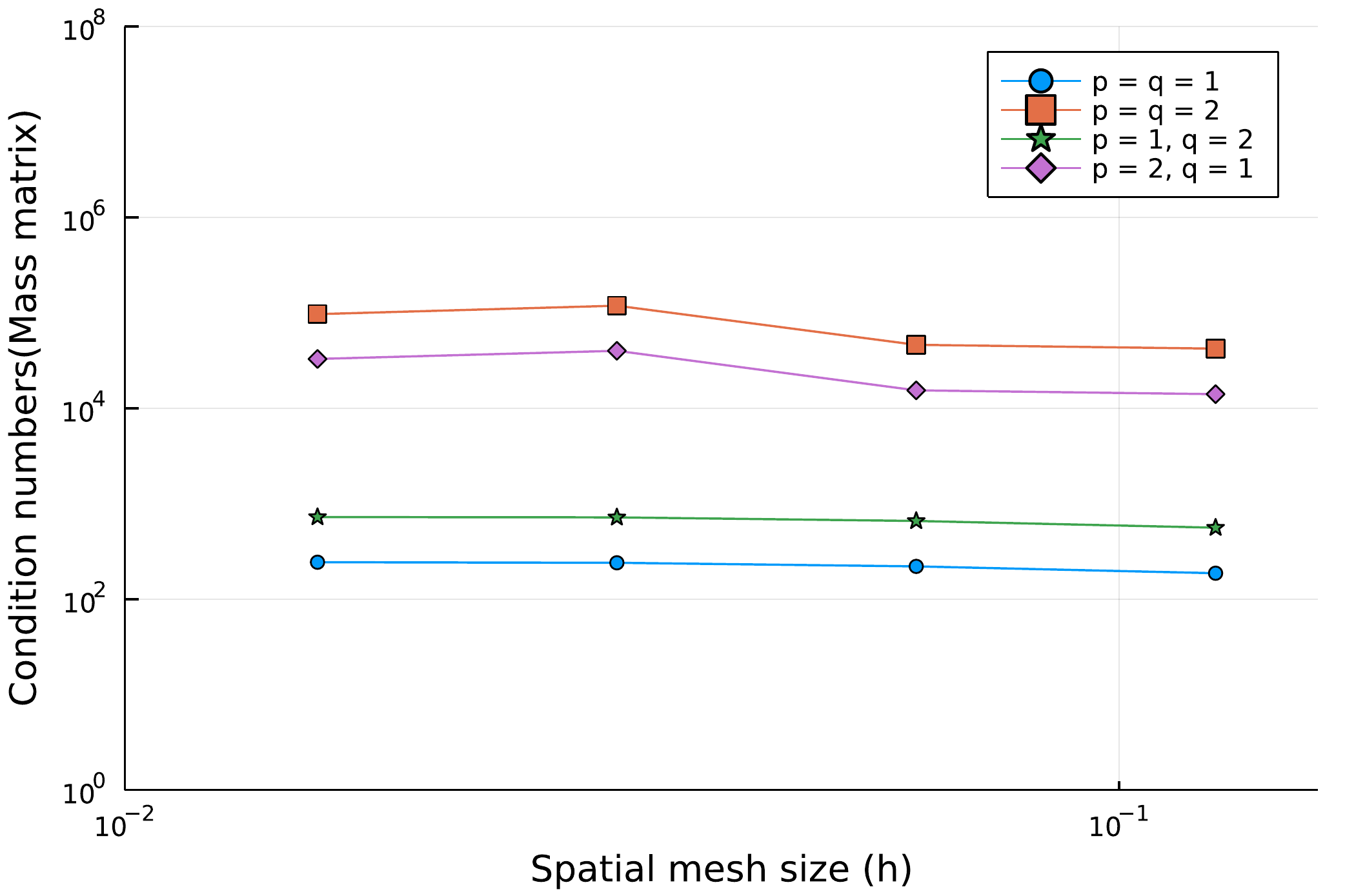}
		\caption{Mass matrix}
		\label{fig:cond_cgce_mass}
	\end{subfigure}
	\vspace{0.2cm}
	\begin{subfigure}[b]{0.45\textwidth}
		\centering
		\includegraphics[width=\textwidth]{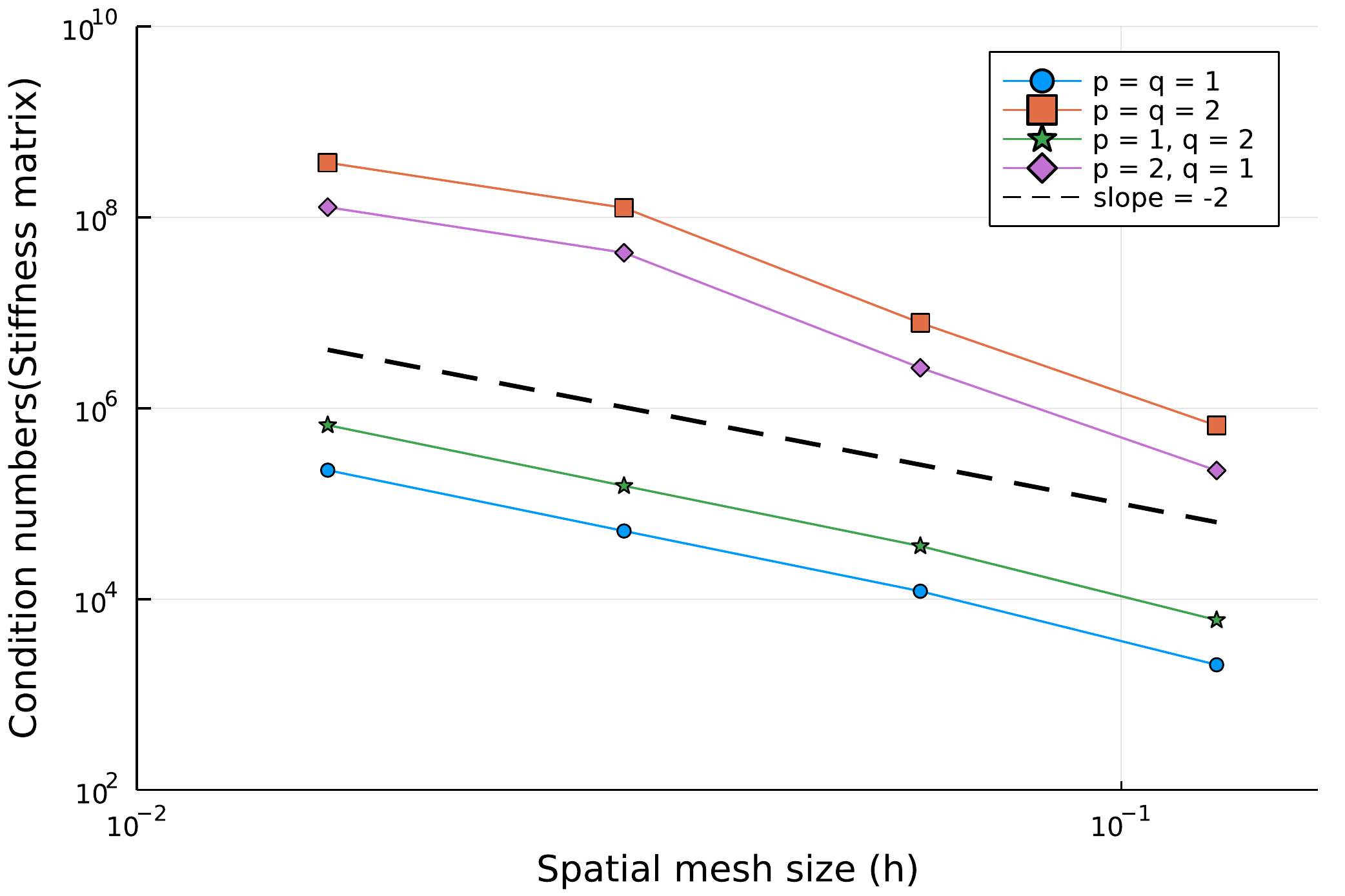}
		\caption{Stiffness matrix}
		\label{fig:cond_cgce_stiff}
	\end{subfigure}
	\caption{Plots of condition numbers of the mass and stiffness matrices against the spatial mesh size $h$
	using \ac{agfem} for a single time slab of size $\tau = h$.}
	\label{fig:cond_cgce_agfem}
\end{figure}

\par In the next experiment we study the behaviour of condition numbers of the mass and stiffness matrices with respect to mesh refinement. We consider the moving geometry with the circular hole and spatial background meshes of sizes $h = 2^{-m}, m = 3,4,5,6$ and a single time slab of size $\tau = h$. The plot of condition numbers against the spatial mesh size $h$ using \ac{agfem} is depicted in Figure~\ref{fig:cond_cgce_agfem}. We observe that the condition numbers of the mass matrix are almost constant whereas the condition numbers of the stiffness matrix scale with $\mathcal{O}(h^{-2})$ using \ac{agfem}, which is the expected ratio. 

\subsection{Convergence tests}\label{sec:convergence_test}

\par This experiment shows the behaviour of the error with respect to mesh refinement. We consider a geometry with a moving square hole, spatial background meshes of sizes $h = 2^{-m}, m = 3,4,5,6$ and a constant time step size $\tau = h$. Since the domain is polyhedral, the geometry is exactly represented. We plot the error in the accumulated \ac{dg} norm against the mesh size choosing the value of the coercivity constant $c_\mu = 1 $ in Figure~\ref{fig:err_dg_c=1}.
\begin{figure}
	\centering
	\begin{subfigure}[b]{0.45\textwidth}
		\centering
		\includegraphics[width=\textwidth]{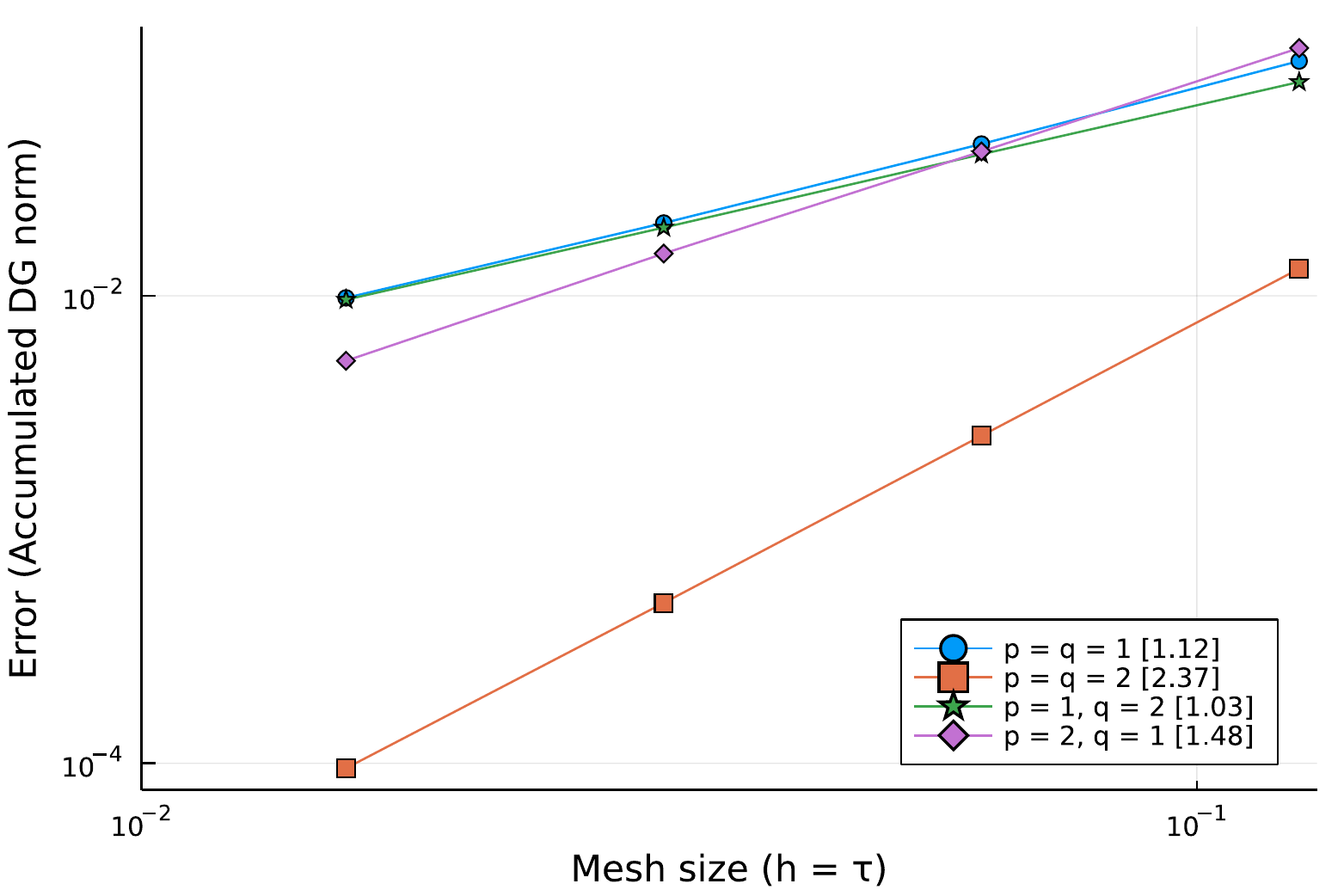}
		\caption{Accumulated \ac{dg} norm}
		\label{fig:err_dg_c=1}
	\end{subfigure}
	\vspace{0.2cm}
	\begin{subfigure}[b]{0.45\textwidth}
		\centering
		\includegraphics[width=\textwidth]{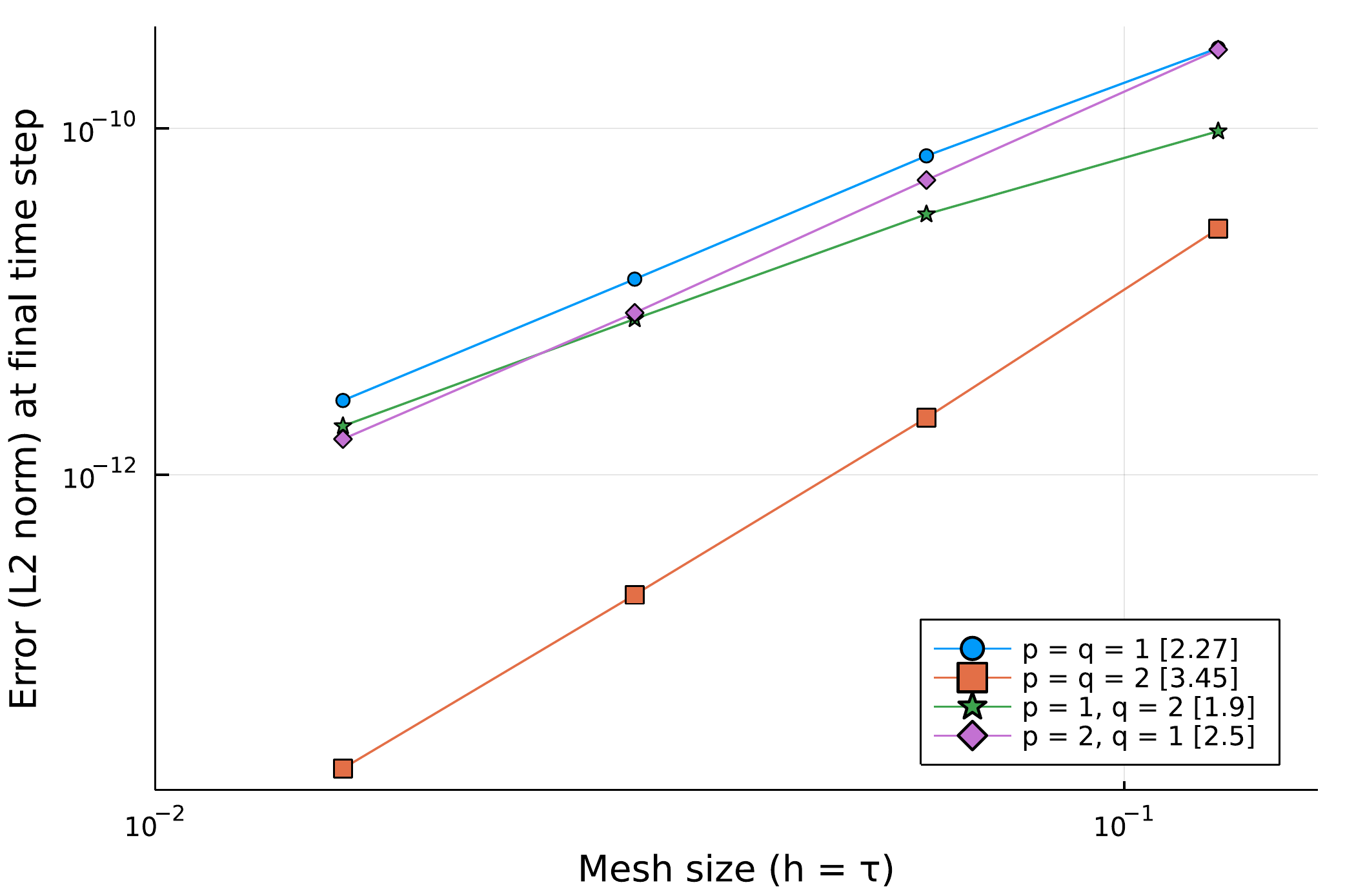}
		\caption{$L^2(\Omega^N)$ norm}
		\label{fig:err_l2_final}
	\end{subfigure}
	\caption{The first figure illustrates the plot of total error in the accumulated \ac{dg} norm (with constant $c_\mu = 1$) against the mesh size ($h=\tau$) using \ac{agfem}. The second figure depicts the plot of  error in the $L^2$ norm at the final time step against the mesh size ($h=\tau$) using \ac{agfem}}
	\label{fig:err_cgce_plots}
\end{figure}
We observe that using \ac{agfem}, when the ratio $h/\tau$ remains constant during refinement, the error converges with $\mathcal{O}(h^{s})$, where $s \doteq \min(p,q)$. This result is in agreement with \eqref{eq:total_err_estimate}.

\par In addition, with the same experimental setup the error in the $L^2(\Omega^N)$ norm is computed and plotted against the mesh size in Figure~\ref{fig:err_l2_final}. We observe higher convergence compared to the results using the accumulated \ac{dg} norm. The error scales with $\mathcal{O}(h^r)$, where $r\doteq\min(p+1,q+1)$.

\subsection{An example with topology change}
\label{sec:cdeq}

This last example studies the embedded space-time method in a more challenging geometrical configuration consisting of a time-dependent domain that undergoes topological changes. The example is taken from \cite{Lehrenfeld2019,preuss} where it is also considered to characterise the performance of other embedded \ac{fe} methods for time-evolving domains. The problem geometry is the union of two disks that travel with opposite velocities and eventually intersect (see Figure \ref{fig:cded1}). We describe the disks with the level-set functions,
\begin{equation*}
\begin{aligned}
\phi_1(\boldsymbol{x},t) = |\boldsymbol{x} - (0,t-3/4)^\mathrm{T}  | - 0.5,\\
\phi_2(\boldsymbol{x},t) = |\boldsymbol{x} - (0,3/4-t)^\mathrm{T}  | - 0.5,
\end{aligned}
\end{equation*}
$|\boldsymbol{v}|$ being the algebraic 2-norm of a vector $\boldsymbol{v}$. From these level-set functions, the time-dependent problem geometry is defined as
$$\Omega(t) = \{\boldsymbol{x}\in \mathbb{R}^2:\ \min(\phi_1(\boldsymbol{x},t),\phi_2(\boldsymbol{x},t))<0  \} \text{ for } t\in[0,T], $$
where the minimum operator is used to define the level-set function that describes the union of the two disks. The final time is selected as $T=3/2$ so that the initial and final geometry coincide, namely $\Omega(0) = \Omega(T)$. The geometry is implicitly defined via a level-set function that is linearly approximated.
\begin{figure}[ht!]

\begin{tiny}
$-1$\hspace*{.5em} \includegraphics[width=0.3\textwidth]{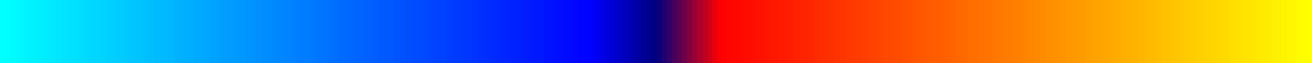} \hspace*{.5em} $1$
\end{tiny}

\begin{subfigure}{0.1\textwidth}
\adjincludegraphics[width=1\textwidth,trim={{0.18\width} 0 {0.18\width} 0},clip]{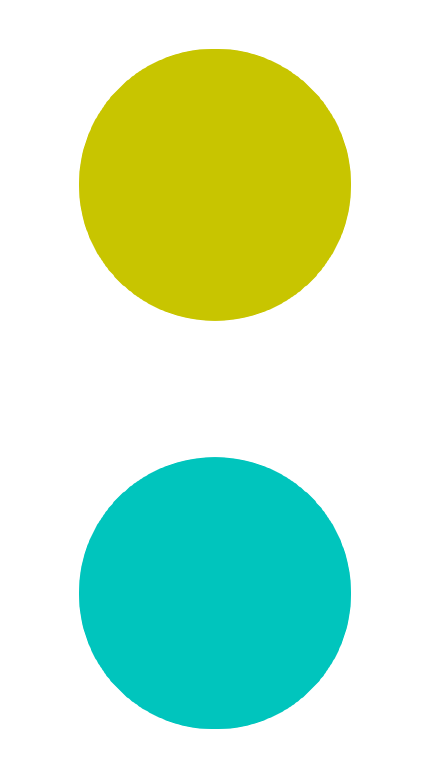}
\caption*{$t=0$}
\end{subfigure}
\begin{subfigure}{0.1\textwidth}
\adjincludegraphics[width=1\textwidth,trim={{0.18\width} 0 {0.18\width} 0},clip]{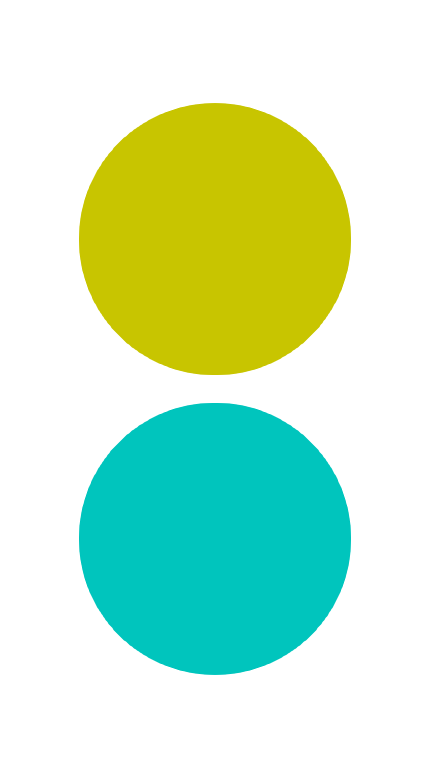}
\caption*{$t=.2$}
\end{subfigure}
\begin{subfigure}{0.1\textwidth}
\adjincludegraphics[width=1\textwidth,trim={{0.18\width} 0 {0.18\width} 0},clip]{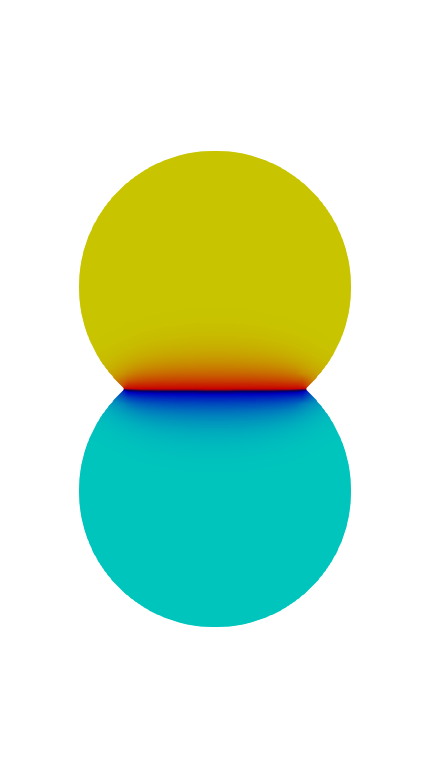}
\caption*{$t=.375$}
\end{subfigure}
\begin{subfigure}{0.1\textwidth}
\adjincludegraphics[width=1\textwidth,trim={{0.18\width} 0 {0.18\width} 0},clip]{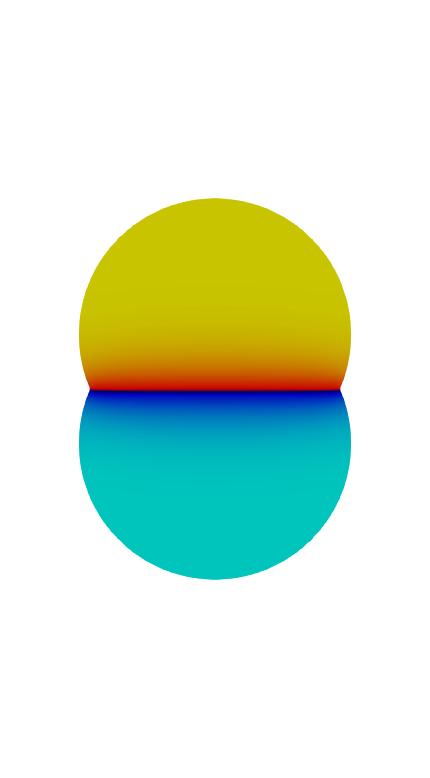}
\caption*{$t=.55$}
\end{subfigure}
\begin{subfigure}{0.1\textwidth}
\adjincludegraphics[width=1\textwidth,trim={{0.18\width} 0 {0.18\width} 0},clip]{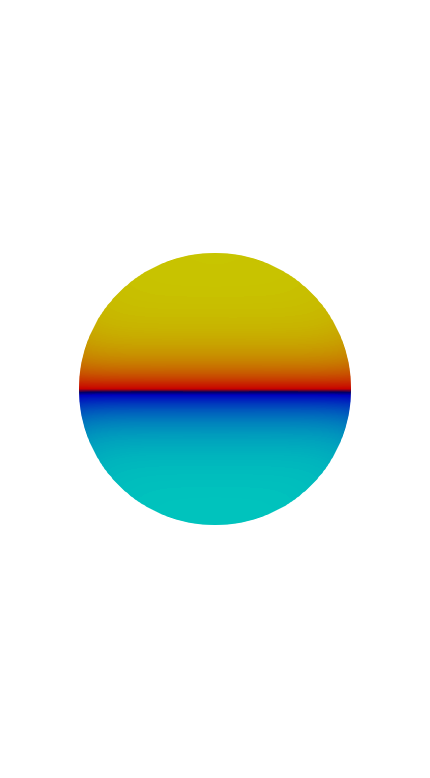}
\caption*{$t=.75$}
\end{subfigure}
\begin{subfigure}{0.1\textwidth}
\adjincludegraphics[width=1\textwidth,trim={{0.18\width} 0 {0.18\width} 0},clip]{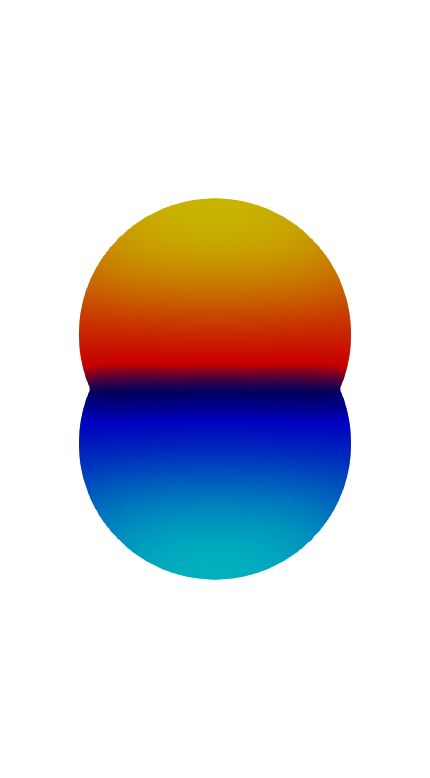}
\caption*{$t=.95$}
\end{subfigure}
\begin{subfigure}{0.1\textwidth}
\adjincludegraphics[width=1\textwidth,trim={{0.18\width} 0 {0.18\width} 0},clip]{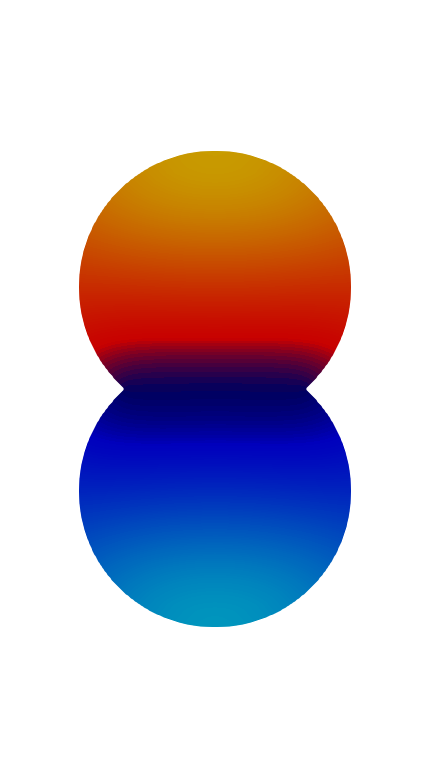}
\caption*{$t=1.125$}
\end{subfigure}
\begin{subfigure}{0.1\textwidth}
\adjincludegraphics[width=1\textwidth,trim={{0.18\width} 0 {0.18\width} 0},clip]{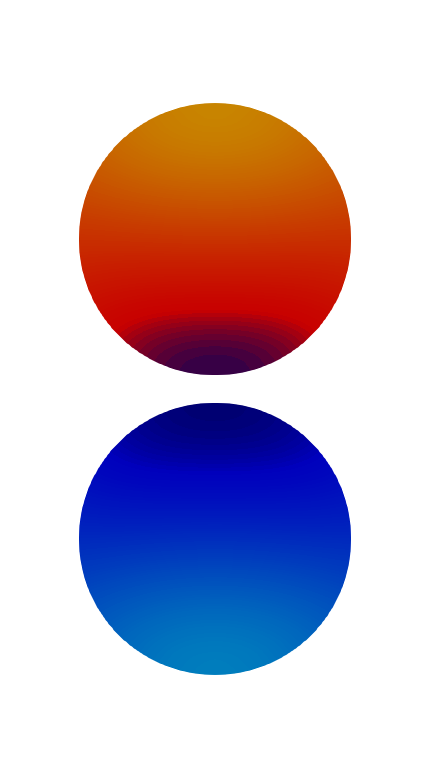}
\caption*{$t=1.3$}
\end{subfigure}
\begin{subfigure}{0.1\textwidth}
\adjincludegraphics[width=1\textwidth,trim={{0.18\width} 0 {0.18\width} 0},clip]{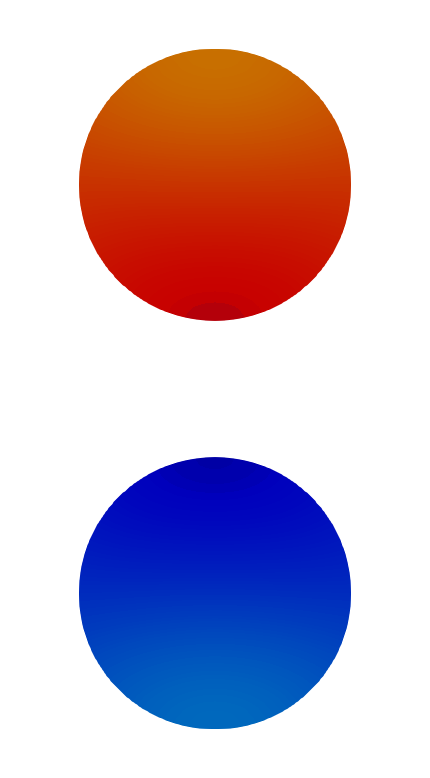}
\caption*{$T=1.5$}
\end{subfigure}

\caption{Numerical solution of the example in Section \ref{sec:cdeq} at nine selected time instants for a discretisation of $60\times121$ spatial cells and $60$ time slabs.}
\label{fig:cded1}
\end{figure}

On $\Omega(t)$, we solve the advection-diffusion equation $\partial_t u + \boldsymbol{w}\cdot\nabla u - \mu \Delta u = 0$ with homogeneous Neumann boundary conditions, $\boldsymbol{n_x}\cdot\nabla u = 0$, and the initial condition $u(x,y,0) = \mathrm{sign}(y)$. The advection velocity field is given by
\begin{equation*}
\boldsymbol{w}(x,y,t) = \left\lbrace
\begin{aligned}
(0,1)^\mathrm{T}\quad \text{if } y>0 \text{ and } t\leq T/2 \text{ or } y<0 \text{ and } t> T/2\\
(0,-1)^\mathrm{T}\quad \text{if } y\leq 0 \text{ and } t\leq T/2 \text{ or } y\leq 0 \text{ and } t> T/2\
\end{aligned}
\right..
\end{equation*}

 As in \cite{Lehrenfeld2019,preuss}, we take $\mu = 0.1$. For this value of the diffusion coefficient, the problem is diffusion-dominated and it can be solved with the numerical scheme presented in previous sections without any further stabilisation technique. We only need to introduce the advection term in the weak form in the obvious way. Adding numerical stabilisation for the advection term (e.g., SUPG) would be also possible, but we want to use a numerical scheme as close as possible as the one analysed in previous sections, as permitted by the diffusion-dominated nature of this example.

 As the advection velocity coincides with the motion of the disks $\boldsymbol{w}\cdot\boldsymbol{n_x} + n_t =0$ holds on the boundary of the domain and the problem is well-posed, as discussed in Sec.~\ref{sec:model_problem}.

For the numerical discretisation, we consider a Cartesian mesh of the artificial domain $\Omega_{art}\doteq(-0.6,0.6)\times(-1.35,1.35)$ with two different resolutions consisting of $60\times 121$ cells. We deliberately use an odd number of cells in the $y$-direction so that the first contact of the two disks happens within a single cut cell. Otherwise, the first contact would take place at a cell boundary, which is an unrealistically simple particular case. The temporal discretisation is fixed to $60$ time slabs.

Figure \ref{fig:cded1} shows the obtained solution. The constant initial condition at the two disks is transported by the advection field with the same velocity as the motion of the disks themselves. At the contact event, a topologically new domain is created and diffusion starts to take place due to a sudden formation of a concentration gradient. A detailed view of the contact zone is given in Figure \ref{fig:cded2}. Note that the numerical scheme is able to capture the sharp concentration gradient that takes place at the contact point without introducing numerical artefacts. In contrast to the results reported in \cite{Lehrenfeld2019,preuss}, we do not see any spurious diffusion starting before the time of first contact even though there is only a single full cell between the two disks at the time slab right before contact (see Figure \ref{fig:cded2-1}). For our formulation, diffusion would appear only when the disks get in touch (if aggregated cut cells are duplicated when they have disconnected regions). This is in contrast to the results in \cite{Lehrenfeld2019} for which the diffusion might start before, even with several layers of full cells between the disks, depending on the time step size. We can conclude that our numerical scheme is able to properly handle the topological change in this example. Similar results can be achieved for the weak \ac{agfem} method proposed in Sec.~\ref{sec:ghost_penalty}.

\begin{figure}[ht!]
\begin{subfigure}{0.32\textwidth}
\includegraphics[width=0.49\textwidth]{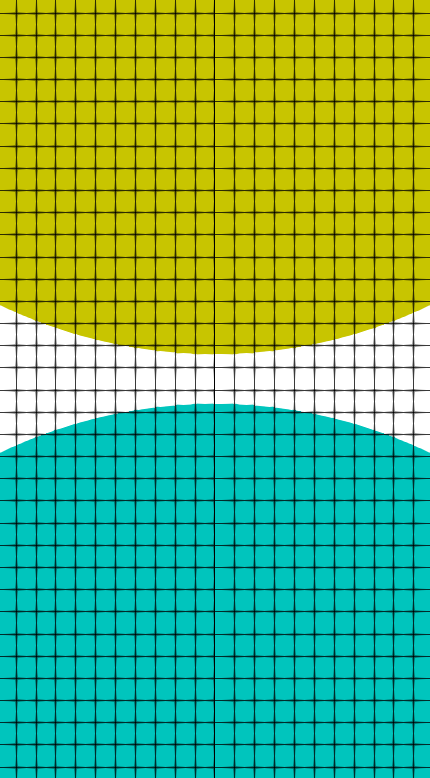}
\includegraphics[width=0.49\textwidth]{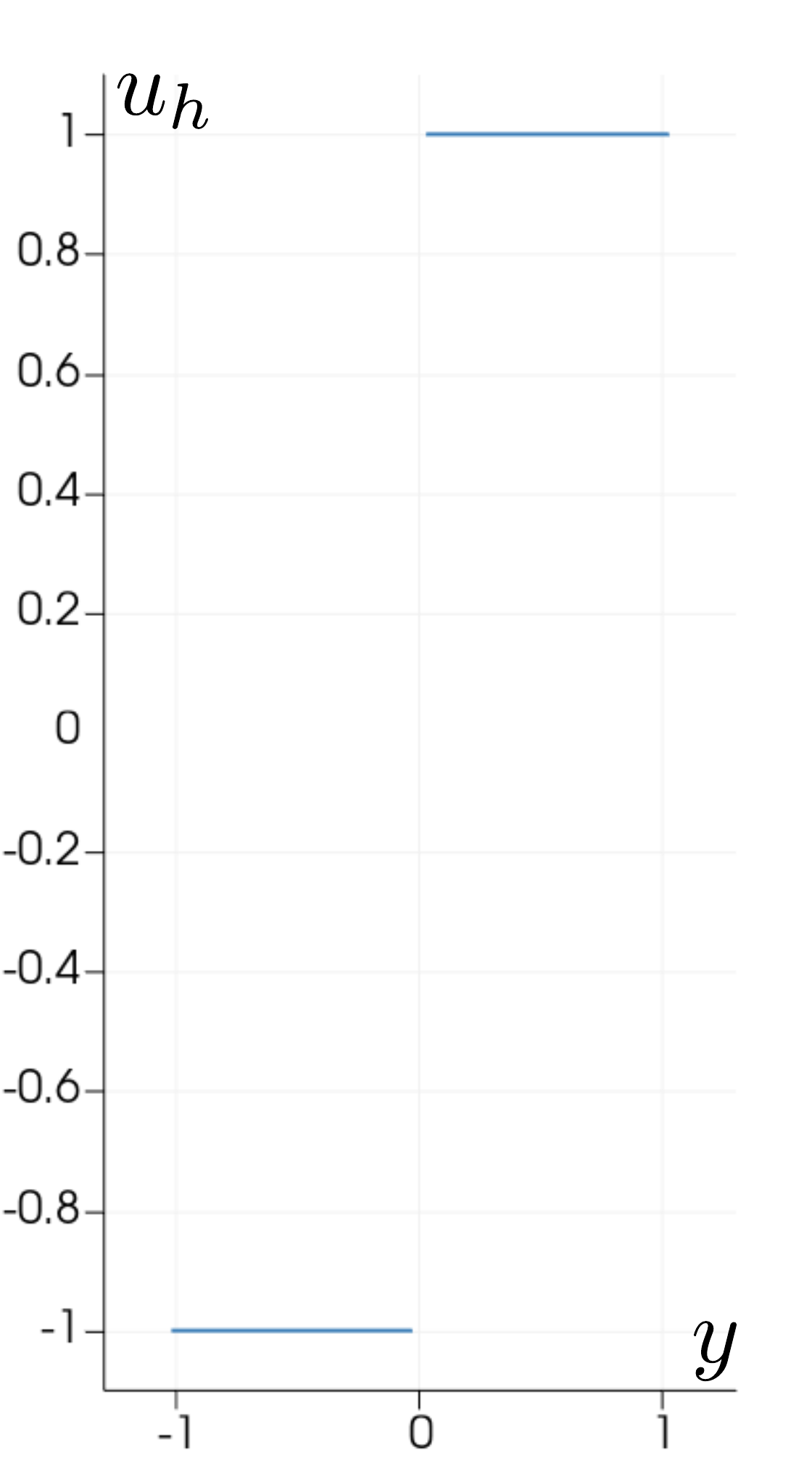}
\caption{$t=.225$}
\label{fig:cded2-1}
\end{subfigure}
\begin{subfigure}{0.32\textwidth}
\includegraphics[width=0.49\textwidth]{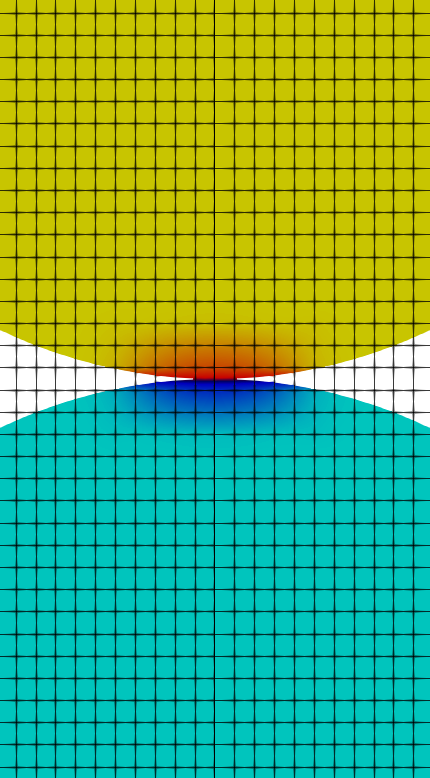}
\includegraphics[width=0.49\textwidth]{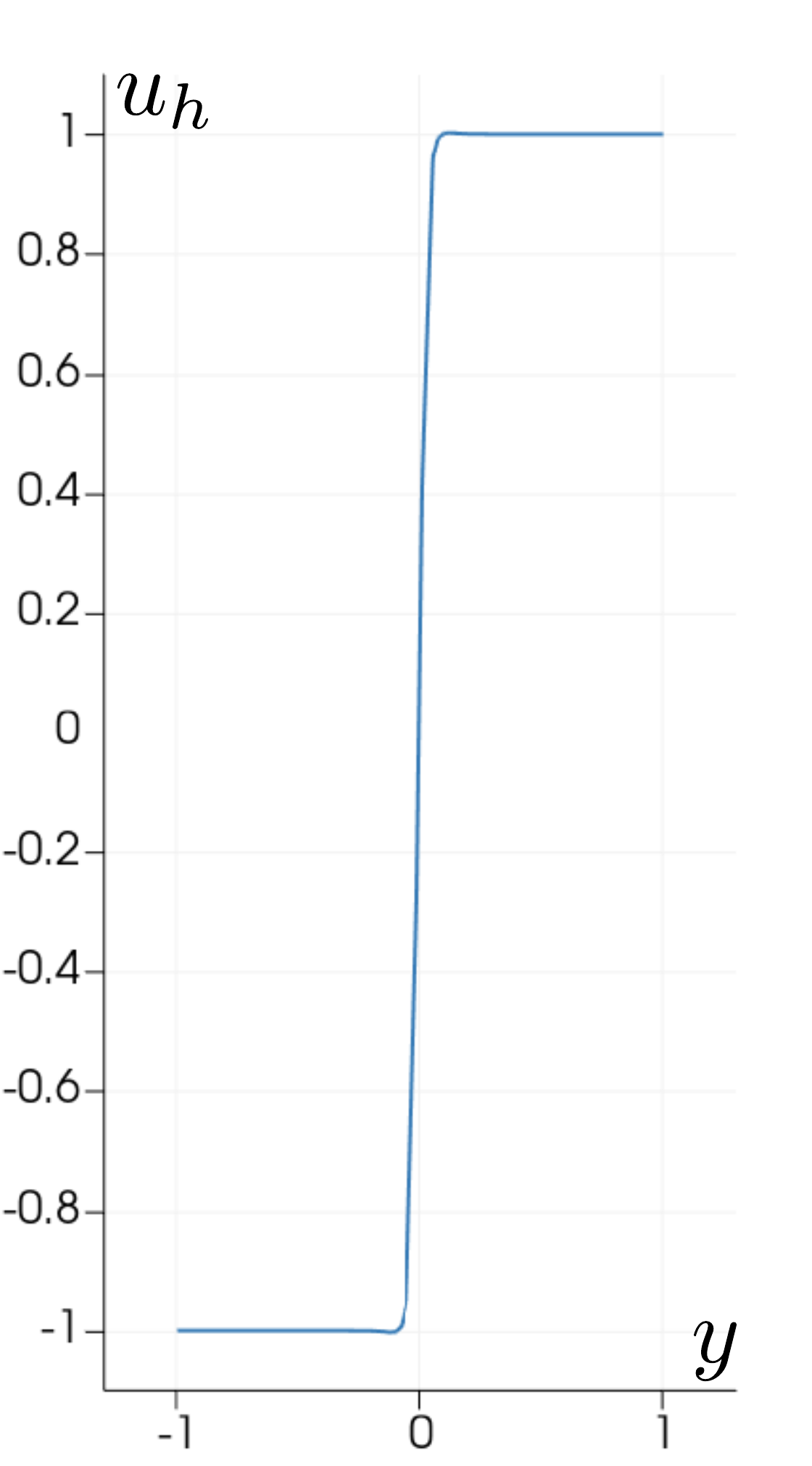}
\caption{$t=.25$}
\end{subfigure}
\begin{subfigure}{0.32\textwidth}
\includegraphics[width=0.49\textwidth]{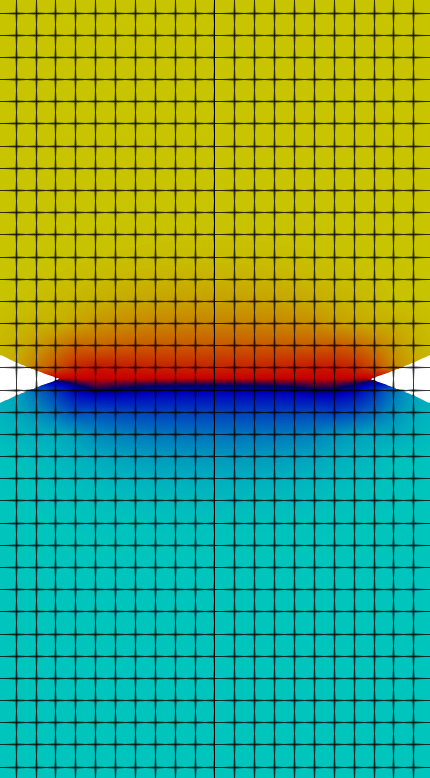}
\includegraphics[width=0.49\textwidth]{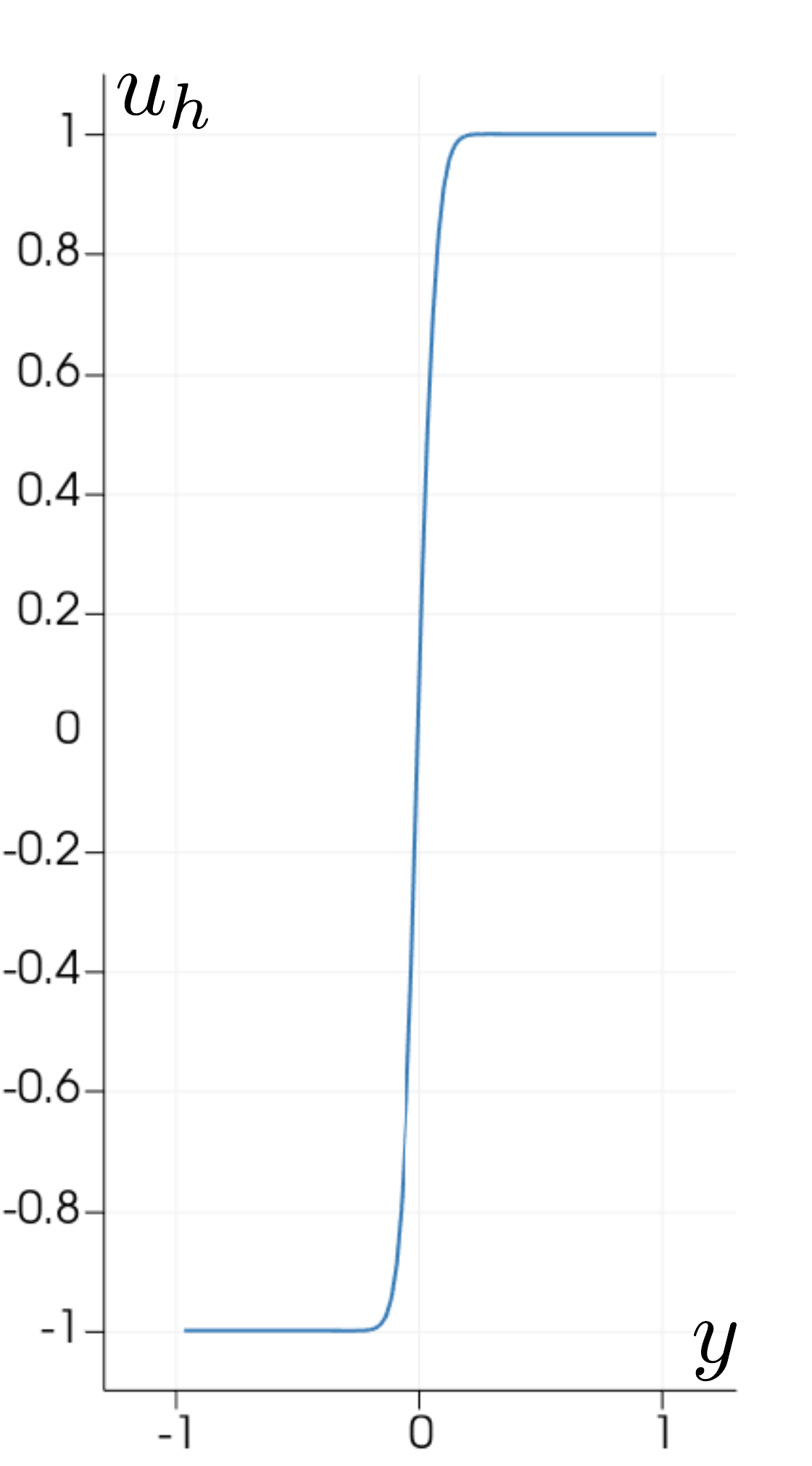}
\caption{$t=.275$}
\end{subfigure}
\caption{Detailed view of the contact zone for the last time step before contact, the time step when the contact takes place, and the next step after contact. The right hand side of each sub-figure shows the numerical solution $u_h$ restricted to the vertical line $x=0$ (the symmetry axis of the problem). The first contact point coincides with $(0,0)^\mathrm{T}$. The colour bar of this figure is the same as in Figure \ref{fig:cded1}.}
\label{fig:cded2}
\end{figure}

\section{Conclusions}\label{sec:conclusion}

\par In this work, we have proposed a novel space-time unfitted \ac{fe} technique to solve time-dependent \acp{pde}. The use of a variational space-time formulation is proposed to approximate problems with moving domains or interfaces. In order to circumvent the lack of robustness of these methods to cut locations, we have extended \ac{agfem} to space-time.

\par \ac{agfe} spaces are defined as the image of a discrete extension operator that constrains ill-posed \acp{dof} with well-posed \acp{dof}. Using a slab-wise time-constant cell aggregation algorithm, we have defined a discrete extension operator only in space at any time value. The image of this operator is a slab-wise \ac{agfe} space that can be expressed as a tensor-product of spatial and temporal spaces. Due to the definition of well-posedness of space-time cells, this discrete extension operator provides the required robustness with respect to the small cut cell problem.

\par We have carried out the numerical analysis (stability and convergence) of this proposed method for the numerical approximation of the heat equation on moving domains. However, other problems, e.g., convection-diffusion-reaction or even incompressible fluid problems using stabilisation techniques, could be analysed using similar arguments. Exploiting the tensor-product structure of the space, we can prove optimal error estimates. In addition, we have carried out a set of numerical experiments that support the theoretical results for the heat equation and an interface mass transfer problem that involves the advection-diffusion equation. The method proves to be robust and accurate in all scenarios.

\par The present work can readily be applied to (parallel) locally refined $n$-tree meshes using the space discrete extension operator in \cite{Badia2021parallel}. The extension to adaptive mesh refinement in space and time (local time stepping) can be considered in the future.



\section*{Acknowledgments}
This research was partially funded by the Australian Government through the Australian Research Council (project number DP210103092). F. Verdugo acknowledges support from the ``Severo Ochoa Program for Centers of Excellence in R\&D (2019-2023)'' under the grant CEX2018-000797-S funded by the Ministerio de Ciencia e Innovación (MCIN) -- Agencia Estatal de Investigación (AEI/10.13039/501100011033).

\setlength{\bibsep}{0.0ex plus 0.00ex} 
\bibliographystyle{myabbrvnat}
\bibliography{refs}  

\end{document}